\newtheorem{thm}{Theorem}[section]
\newtheorem{lem}[thm]{Lemma}
\newtheorem{cor}[thm]{Corollary}
\newtheorem{rem}[thm]{Remark}
\theoremstyle{definition}
\newtheorem{defn}{Definition}[section]
\theoremstyle{definition}
\theoremstyle{remark}
\theoremstyle{question}
\numberwithin{equation}{section}
\begin{document}

\title[Halmos' two projections theorem]{Halmos' two projections theorem for Hilbert $C^*$-module operators and the Friedrichs} 
\author{Wei Luo}
\address{Department of Mathematics, Shanghai Normal University, Shanghai 200234, PR China}
\email{luoweipig1@163.com}
\author{Mohammad Sal Moslehian}
\address{Department of Pure Mathematics, Center of Excellence in
Analysis on Algebraic Structures (CEAAS), Ferdowsi University of
Mashhad, P. O. Box 1159, Mashhad 91775, Iran.}
\email{moslehian@yahoo.com; moslehian@um.ac.ir}
\author{Qingxiang Xu}
\address{Department of Mathematics, Shanghai Normal University, Shanghai 200234, PR China}
\email{qingxiang$\_$xu@126.com}
\address{Department of Pure Mathematics, Ferdowsi University of Mashhad, P.O. Box 1159, Mashhad 91775, Iran}

\subjclass[2010]{ 46L08, 47A05.}

\keywords{Hilbert $C^*$-module, orthogonal complement, Halmos' two projections theorem, Friedrichs angle.}

\begin{abstract} Halmos' two projections theorem for Hilbert space operators is one of the fundamental results in operator theory. In this paper, we introduce
the term of two harmonious projections in the context of adjointable operators on Hilbert $C^*$-modules, extend Halmos' two projections theorem to the case of two harmonious projections. We also give some new characterizations of the closed submodules and their associated projections. As an application, a norm equation associated to a characterization of the Friedrichs angle is proved to be true in the framework of Hilbert $C^*$-modules.
\end{abstract}
\maketitle
\section{Introduction}
Let $M$ and $N$ be two closed subspaces of a Hilbert space $H$. The Friedrichs angle \cite{Friedrichs}, denoted by $\alpha(M,N)$, is the unique angle in $[0,\frac{\pi}{2}]$ whose cosine is equal to $c(M,N)$, where
\begin{eqnarray*}c(M,N)=\sup\big\{ |\langle x,y\rangle |: x\in \widetilde{M}, y\in \widetilde{N}, \Vert x\Vert\le 1, \Vert y\Vert\le 1\big\},
\end{eqnarray*}
in which $\widetilde{M}=M\cap (M\cap N)^\bot$ and $\widetilde{N}=N\cap (M\cap N)^\bot$. It is known \cite{Deutsch} that
\begin{equation}\label{equ:computation of C M N}c(M,N)=\big\Vert P_MP_N\big(I-P_{M\cap N}\big)\Vert,\end{equation}
where the notation $P_E$ stands for the projection from $H$ onto its closed subspace $E$. Given any projection $P$ on $H$, let $\mathcal{R}(P)$ and $\mathcal{N}(P)$ denote the range and the null space of $P$, respectively. It is proved in \cite{Deutsch} that for every two projections $P$ and $Q$ on $H$,
\begin{equation}\label{equ:motivation equation}\big\Vert PQ\big(I-P_{\mathcal{R}(P)\cap \mathcal{R}(Q)}\big) \big\Vert=\big\Vert (I-P)(I-Q)\big(I-P_{\mathcal{N}(P)\cap \mathcal{N}(Q)}\big)\big\Vert,
\end{equation}
which gives a characterization of the Friedrichs angle as
\begin{equation}\label{equ:M,N norm equivalent for the angle-1}c(M,N)=c(M^\perp, N^\perp).\end{equation}
The proof of equation \eqref{equ:motivation equation} given in \cite{Deutsch} relies on the Pythagorean theorem, that is,
$$\|x\|^2=\|P(x)\|^2+\|(I-P)x\|^2,$$
where $P$ is any projection on $H$ and $x\in H$ is arbitrary.

The Hilbert $C^*$-module is the generalization of the Hilbert space by allowing the inner product to take values in certain $C^*$-algebra $A$ instead of the complex field $\mathbb{C}$. The purpose of this paper is to investigate the validity of \eqref{equ:motivation equation} in the Hilbert $C^*$-module case. Let $H$ be a Hilbert $C^*$-module and $M$ be a closed submodule of $H$. It can be verified directly that the notation $P_M$ is meaningful if and only if
$M$ is orthogonally complemented in $H$, and in this case $(M^\bot)^\bot=M$. So, for projections $P$ and $Q$ on $H$, one can associate the determination of the orthogonal complementarity of $\mathcal{R}(P)\cap \mathcal{R}(Q)$ and $\mathcal{N}(P)\cap \mathcal{N}(Q)$ to \eqref{equ:motivation equation}.

Let $P$ and $Q$ be two projections on a Hilbert $C^*$-module. It is clear that $\overline{\mathcal{R}(P+Q)}^\bot=\mathcal{N}(P)\cap \mathcal{N}(Q)$, so $P_{\mathcal{N}(P)\cap \mathcal{N}(Q)}$ is meaningful whenever $\overline{\mathcal{R}(P+Q)}$ is orthogonally complemented. This observation together with \eqref{equ:motivation equation} lead us to study such a topic: Assume that
$\overline{\mathcal{R}(P+Q)}$ is orthogonally complemented, under what conditions $\overline{\mathcal{R}(I-P+I-Q)}$ is also orthogonally complemented. In Section~\ref{sec:Orthogonal complementarity of closed submodules}, we will give some necessary and sufficient conditions on this topic. Another observation is that the orthogonal complementarity of $\mathcal{R}(P)\cap \mathcal{R}(Q)$ is obviously guaranteed
if $\mathcal{R}(P)\cap \mathcal{R}(Q)=\{0\}$. So we turn to consider the projections $P$ and $Q$ such that $\mathcal{R}(P)$ and $\mathcal{R}(Q)$ are in generic position \cite{Halmos}, that is, $P$ and $Q$ satisfy
$$\mathcal{R}(P)\cap\mathcal{R}(Q)=\mathcal{R}(P)\cap\mathcal{N}(Q)=\mathcal{N}(P)\cap\mathcal{R}(Q)=\mathcal{N}(P)\cap\mathcal{N}(Q)=\{0\}.$$
In Section~\ref{sec:discomplementable projections}, we have managed to construct such two projections $P$ and $Q$ on some Hilbert $C^*$-module $H$ such that neither
\begin{equation}\label{equ:four closures}\overline{\mathcal{R}(P+Q)}, \overline{\mathcal{R}(P+I-Q)}, \overline{\mathcal{R}(I-P+Q)}\ \mbox{nor}\ \overline{\mathcal{R}(I-P+I-Q)}
\end{equation}
is equal to $H$ ensuring that none of them is orthogonally complemented in $H$. We call such $P$ and $Q$ extremely discomplementable projections.

It is notable that the Pythagorean theorem is no longer true for a general Hilbert $A$-module $H$, since for a projection $P$ on $H$ and an element $x\in H$,
the associated two positive elements $a=\langle Px,x\rangle$ and $b=\langle (I-P)x,x\rangle$ in the given $C^*$-algebra $A$ will only satisfy the inequality
$\|a+b\|\le \|a\|+\|b\|$ rather than the equality $\|a+b\|=\|a\|+\|b\|$. This leads us to
study the validity of \eqref{equ:motivation equation} by constructing unitary operators based on the generalized Halmos' two projections theorem.

Halmos' two projections theorem \cite{Halmos} for Hilbert space operators is one of the fundamental results in operator theory, see \cite{ACL}, \cite{Foulis-Jencov¨¢-Pulmannov¨¢} and \cite{Uebersohn}. It says that if $P$ and $Q$ are two projections on a Hilbert space such that $\mathcal{R}(P)$ and $\mathcal{R}(Q)$ are in generic position,  then there exist a unitary map $W$ from $M_1=\mathcal{R}(P)$ onto $M_2=\mathcal{N}(P)$, and two positive operators $C$ and $D$ on $M_1$ with $C^2+D^2=I, CD=DC$ so that under the orthogonal decomposition $H=M_1\dotplus M_2$, we have
\[P=\left(\begin{array}{cc} I & 0 \\0 & 0 \end{array}\right) \ \mbox{and}\  Q=\left(\begin{array}{cc} C^2 & CDW^{-1} \\WCD & WD^2W^{-1} \end{array}\right)\,. \]
It has applications in many areas such as the $cs$-decomposition, characterizations of the closedness of the sum of two subspaces, derivations of von Neumann's formula and the Feldman--Krupnik--Markus formulas, as well as computations of various angles and gaps between two subspaces of a Hilbert space. For the details, the reader is referred to the excellent survey \cite{Bottcher-Spitkovsky} and the references therein; see also \cite{AC,BRO,SIM}.
As far as we know, little has been done in the generalization of Halmos' two projections theorem for Hilbert $C^*$-module operators, which is the  concern of this paper. In Section~\ref{sec:Halmos' two projections theorem}, we have made
some new characterizations of the closed submodules and the associated projections; see \eqref{equ:formula for the projection on QP}, \eqref{equorthogonal decomposition of the range of QP}, \eqref{make projection for}, \eqref{eqn:prepare for the polar decomposition-1} and \eqref{eqn:prepare for the polar decomposition-2}, respectively. Here the key point is the introduction of
the harmonious projections described in Definition~\ref{defn:harmonious projections}. It is proved in Theorem~\ref{thm:Halmos' two projections theorem} that Halmos' two projections theorem remains to be true for every two harmonious projections. As an application, in Section~\ref{sec:the Friedrichs angle} we  show
that equation \eqref{equ:motivation equation} is true firstly in Lemma~\ref{lem:application of Halmos two projections-1} for every two harmonious projections, secondly in Lemma~\ref{lem:the main result+2} for every two projections $P$ and $Q$ such that $\overline{\mathcal{R}(P+Q)}$ and $\overline{\mathcal{R}(2I-P-Q)}$
are both orthogonally complemented, and finally in Theorem~\ref{thm:the final main result} for every two projections $P$ and $Q$ whenever  $\mathcal{R}(P)\cap\mathcal{R}(Q)$ and $\mathcal{N}(P)\cap\mathcal{N}(Q)$ are both orthogonally complemented\footnote{Note that the notations
 $P_{\mathcal{R}(P)\cap \mathcal{R}(Q)}$ and $P_{\mathcal{N}(P)\cap \mathcal{N}(Q)}$ in  equation \eqref{equ:motivation equation}
are meaningful if and only if such an orthogonal complementarity condition is satisfied.}.
Overall, it helps us to extend our viewpoint of the geometry of Hilbert $C^*$-modules; see, e.g., \cite{FRA, MT1, MKX}

Let us briefly recall some basic knowledge about Hilbert $C^*$-modules and adjointable operators. An inner-product module over a $C^*$-algebra $A$ is a right $ A$-module $H$ equipped with an $ A$-valued inner product $\langle \cdot, \cdot \rangle: H \times H \to A$ that is $\mathbb{C}$-linear and $A$-linear in the second variable and satisfies $\langle x, y \rangle^*=\langle y, x\rangle$ as well as $\langle x, x\rangle \geq 0$ with equality if and only if $x = 0$. An inner-product $A$-module $H$ which is complete with respect to the induced norm $\Vert x\Vert=\sqrt{\Vert \langle x,x\rangle\Vert}\,\,(x\in H)$ is called a (right) Hilbert $A$-module.

Suppose that $H$ and $K$ are two Hilbert $A$-modules, let $\mathcal{L}(H,K)$ be the set of operators $T:H\to K$ for which there is an operator $T^*:K\to H$ such that $\langle Tx,y\rangle=\langle x,T^*y\rangle$ for all $x\in H$ and $y\in K$. Each member in $\mathcal{L}(H,K)$ is called an adjointable operator. When $H=K$, $\mathcal{L}(H,H)$, abbreviated to $\mathcal{L}(H)$, is a $C^*$-algebra. By a positive operator we mean an operator $T\in \mathcal{L}(H)$ such that $\langle Tx,x\rangle\geq 0$ for all $x\in H$ \cite[Lemma~4.1]{Lance}. The strict topology (strong$^*$ topology) on $\mathcal
{L}(H,K)$ is defined to be the topology determined by the seminorms $T\mapsto \|Tx\|\,\,(x\in H)$ and $T\mapsto \|T^*y\|\,\,(y\in K)$. For every $T\in {\mathcal L}(H,K)$, the range and the null space of $T$ are designated by $\mathcal{R}(T)$ and $\mathcal{N}(T)$, respectively. By $I_H$ (or simply $I$) we denote the identity operator on $H$. The reader is referred to \cite{Lance,MT2} for some other basic notions related to Hilbert $C^*$-modules.

In this paper, the notations of ``$\oplus$" and ``$\dotplus$" are used
with different meanings for the sake of reader's convenience. For Hilbert $A$-modules
$H_1$ and $H_2$, let
$$H_1\oplus H_2:=\left\{(h_1, h_2)^T :h_i\in H_i, i=1,2\right\},$$ which is also a Hilbert $A$-module whose $A$-valued inner product is given by
$$\left<(x_1, y_1)^T, (x_2, y_2)^T\right>=\big<x_1,x_2\big>+\big<y_1,
y_2\big>$$ for $x_i\in H_1, y_i\in H_2, i=1,2$. If both
$H_1$ and $H_2$ are submodules of a Hilbert $A$-module $H$ such that $H_1\cap H_2=\{0\}$, then we
set
$$H_1\dotplus H_2:=\{h_1+h_2 : h_i\in H_i, i=1,2\}.$$

\section{Orthogonal complementarity of closed submodules associated to two projections}\label{sec:Orthogonal complementarity of closed submodules}

Throughout the rest of this paper, $A$ is a $C^*$-algebra, $H$ and $K$ are Hilbert $A$-modules. By a projection, we mean an operator $P\in \mathcal{L}(H)$ such that $P=P^2=P^*$. Recall that a closed submodule $M$ of $H$ is said to be orthogonally complemented in $H$ if $H=M\dotplus M^\perp$, where
$$M^\perp=\big\{x\in H: \langle x,y\rangle=0\ \mbox{for every }\ y\in
M\big\}.$$
In this case, the projection from $H$ onto $M$ is denoted by $P_M$.

\begin{rem}\label{rem:double orthogonality}{\rm Let $M$ be a closed submodule of $H$. Then $M$ is orthogonally complemented in $H$ if and only if there exists a projection $P\in\mathcal{L}(H)$ such that $\mathcal{R}(P)=M$. In this case,
\begin{equation}\label{equ:double orthogonality}\big(M^\bot\big)^\bot=\mathcal{R}\big(I-(I-P)\big)=\mathcal{R}(P)=M.\end{equation}
}\end{rem}

\begin{lem} \label{lem:Range closure of TT and T} {\rm\cite[Proposition 3.7]{Lance}}
Let $T\in\mathcal{L}(H,K)$. Then $$\overline {\mathcal{R}(T^*T)}=\overline{ \mathcal{R}(T^*)}\ \mbox{and}\ \overline {\mathcal{R}(TT^*)}=\overline{ \mathcal{R}(T)}.$$
\end{lem}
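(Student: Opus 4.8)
The plan is to prove only the first identity, $\overline{\mathcal{R}(T^*T)}=\overline{\mathcal{R}(T^*)}$, since the second follows by applying it with $T^*$ in place of $T$ (recall $(T^*)^*=T$ as $T$ is adjointable). The inclusion $\overline{\mathcal{R}(T^*T)}\subseteq\overline{\mathcal{R}(T^*)}$ is immediate from $T^*T=T^*\cdot T$. For the reverse inclusion one is tempted to argue, as in the Hilbert space case, that $\overline{\mathcal{R}(T^*T)}$ and $\overline{\mathcal{R}(T^*)}$ share the orthogonal complement $\mathcal{N}(T^*T)=\mathcal{N}(T)$ (the equality $\mathcal{N}(T^*T)=\mathcal{N}(T)$ itself is easy: $T^*Tx=0$ forces $\langle Tx,Tx\rangle=\langle x,T^*Tx\rangle=0$) and then to deduce equality of the two closures. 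But this last step is exactly what is unavailable for Hilbert $C^*$-modules, where a closed submodule need not be orthogonally complemented and $(M^\perp)^\perp$ may be strictly larger than $M$. So instead I would produce, for each fixed $y\in K$, explicit elements of $\mathcal{R}(T^*T)$ converging in norm to $T^*y$.

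For $\varepsilon>0$ the operator $T^*T+\varepsilon I$ is bounded below by $\varepsilon I$ (since $T^*T\ge 0$), hence invertible in $\mathcal{L}(H)$; set
\[x_\varepsilon:=T^*T\,(T^*T+\varepsilon I)^{-1}T^*y\in\mathcal{R}(T^*T).\]
The key algebraic fact is the resolvent-type identity $(T^*T+\varepsilon I)^{-1}T^*=T^*(TT^*+\varepsilon I)^{-1}$, which follows at once from $(T^*T+\varepsilon I)T^*=T^*(TT^*+\varepsilon I)$. Using it together with $T^*T=(T^*T+\varepsilon I)-\varepsilon I$, one rewrites
\[x_\varepsilon-T^*y=-\varepsilon\,(T^*T+\varepsilon I)^{-1}T^*y=-\varepsilon\,T^*(TT^*+\varepsilon I)^{-1}y.\]

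It remains to estimate $\|x_\varepsilon-T^*y\|$. Writing $b:=TT^*\ge 0$ and using adjointability together with the $C^*$-identity for the norm,
\[\|x_\varepsilon-T^*y\|^2=\varepsilon^2\,\big\|\langle (b+\varepsilon I)^{-1}y,\,b\,(b+\varepsilon I)^{-1}y\rangle\big\|=\big\|\langle y,\,\varphi_\varepsilon(b)\,y\rangle\big\|,\]
where $\varphi_\varepsilon(t)=\varepsilon^2 t(t+\varepsilon)^{-2}$ is continuous on the spectrum of $b$ and $\varphi_\varepsilon(b)$ is defined by the continuous functional calculus in $\mathcal{L}(K)$. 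The elementary inequality $4\varepsilon t\le(t+\varepsilon)^2$ for $t\ge 0$ (equivalently $(t-\varepsilon)^2\ge0$) gives $0\le\varphi_\varepsilon(t)\le\tfrac{\varepsilon}{4}$, hence $0\le\varphi_\varepsilon(b)\le\tfrac{\varepsilon}{4}I$, so $\langle y,\varphi_\varepsilon(b)y\rangle\le\tfrac{\varepsilon}{4}\langle y,y\rangle$ and therefore $\|x_\varepsilon-T^*y\|^2\le\tfrac{\varepsilon}{4}\|y\|^2\to0$ as $\varepsilon\to0^+$. Thus $T^*y\in\overline{\mathcal{R}(T^*T)}$ for every $y\in K$, which yields $\overline{\mathcal{R}(T^*)}\subseteq\overline{\mathcal{R}(T^*T)}$ and completes the argument. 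The only genuine obstacle is that the Hilbert space orthogonal-complement shortcut does not survive the passage to Hilbert $C^*$-modules; once it is replaced by the explicit resolvent approximation, the sole computational ingredient is the scalar estimate $4\varepsilon t\le(t+\varepsilon)^2$, which transports to $\mathcal{L}(K)$ through functional calculus.
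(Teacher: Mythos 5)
Your argument is correct, and there is nothing in the paper to compare it with step by step: the paper does not prove this lemma at all, it simply quotes it as Proposition 3.7 of Lance's book. Your proof is a valid self-contained substitute, and it is essentially the standard argument for that proposition: reduce to the single identity $\overline{\mathcal{R}(T^*T)}=\overline{\mathcal{R}(T^*)}$, note the trivial inclusion, and obtain the reverse one by approximating $T^*y$ with $T^*T(T^*T+\varepsilon I)^{-1}T^*y$, the error being controlled via the intertwining relation $(T^*T+\varepsilon I)^{-1}T^*=T^*(TT^*+\varepsilon I)^{-1}$ and the scalar bound $\varepsilon^2 t(t+\varepsilon)^{-2}\le\varepsilon/4$ transported by functional calculus; the passage from $0\le\varphi_\varepsilon(TT^*)\le\frac{\varepsilon}{4}I$ to $\langle y,\varphi_\varepsilon(TT^*)y\rangle\le\frac{\varepsilon}{4}\langle y,y\rangle$ is exactly the characterization of positivity in $\mathcal{L}(K)$ (Lance, Lemma~4.1) that the paper already invokes. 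You are also right that the Hilbert-space shortcut via orthogonal complements is precisely what fails for Hilbert $C^*$-modules, which is why the explicit approximation is the appropriate route here.
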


\begin{lem}\label{lem:R(P+Q)=R(P)+R(Q)}Let $P,Q\in\mathcal{L}(H)$ be two projections. Then
\begin{equation}\label{equ:closure of the summation of two projections}\overline{\mathcal{R}(P)+\mathcal{R}(Q)}=\overline{\mathcal{R}(P+Q)}.\end{equation}
In particular,
\begin{equation}\label{equ:range of P contained range of P plus Q}\mathcal{R}(P)\subseteq \overline{\mathcal{R}(P+Q)}\ \mbox{and}\ \mathcal{R}(Q)\subseteq \overline{\mathcal{R}(P+Q)}.
\end{equation}
\end{lem}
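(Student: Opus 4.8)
The plan is to identify $P+Q$ with a composition $T^*T$ for a suitable adjointable operator $T$ built from $P$ and $Q$, and then invoke Lemma~\ref{lem:Range closure of TT and T}. Concretely, since $P$ and $Q$ are projections we have $P=P^*P$ and $Q=Q^*Q$, so consider the operator $T\colon H\to H\oplus H$ defined by $Tx=(Px,Qx)^T$. This $T$ is adjointable with $T^*(u,v)^T=Pu+Qv$, and a direct computation gives $T^*T=P^2+Q^2=P+Q$. By Lemma~\ref{lem:Range closure of TT and T}, $\overline{\mathcal{R}(P+Q)}=\overline{\mathcal{R}(T^*T)}=\overline{\mathcal{R}(T^*)}$. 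It therefore remains to identify $\overline{\mathcal{R}(T^*)}$ with $\overline{\mathcal{R}(P)+\mathcal{R}(Q)}$.

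For that last identification I would argue by a double inclusion at the level of closures. On one hand, $\mathcal{R}(T^*)=\{Pu+Qv:u,v\in H\}=\mathcal{R}(P)+\mathcal{R}(Q)$ literally (not merely up to closure), since $T^*(u,0)^T=Pu$ and $T^*(0,v)^T=Qv$ and these span all sums; hence $\overline{\mathcal{R}(T^*)}=\overline{\mathcal{R}(P)+\mathcal{R}(Q)}$. Combining this with the previous paragraph yields \eqref{equ:closure of the summation of two projections}. The ``in particular'' assertion \eqref{equ:range of P contained range of P plus Q} is then immediate: $\mathcal{R}(P)\subseteq \mathcal{R}(P)+\mathcal{R}(Q)\subseteq\overline{\mathcal{R}(P)+\mathcal{R}(Q)}=\overline{\mathcal{R}(P+Q)}$, and symmetrically for $\mathcal{R}(Q)$.

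I expect no serious obstacle here; the only point requiring a little care is the verification that $T$ as defined is genuinely adjointable with the claimed adjoint, which is a routine check using the definition of the inner product on $H\oplus H$ recalled in the introduction, together with $P^*=P$ and $Q^*=Q$. One could alternatively avoid introducing $H\oplus H$ by working directly: note $\mathcal{R}(P+Q)\subseteq\mathcal{R}(P)+\mathcal{R}(Q)$ trivially, and for the reverse containment of closures use that $P+Q\ge 0$ so $\overline{\mathcal{R}(P+Q)}=\overline{\mathcal{R}\big((P+Q)^{1/2}\big)}$ and estimate $\|Px\|^2=\langle (P+Q)x,x\rangle-\langle Qx,x\rangle\le \langle (P+Q)x,x\rangle=\|(P+Q)^{1/2}x\|^2$, which shows $P$ factors through $(P+Q)^{1/2}$ and hence $\mathcal{R}(P)\subseteq\overline{\mathcal{R}((P+Q)^{1/2})}=\overline{\mathcal{R}(P+Q)}$; but the $T^*T$ trick is cleaner and is the route I would take.
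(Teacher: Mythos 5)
Your proof is correct and follows essentially the same route as the paper: the paper realizes $P+Q$ via the $2\times 2$ operator matrix $T=\left(\begin{smallmatrix}0&0\\P&Q\end{smallmatrix}\right)\in\mathcal{L}(H\oplus H)$ with $\mathcal{R}(TT^*)=\{0\}\oplus\mathcal{R}(P+Q)$ and $\mathcal{R}(T)=\{0\}\oplus\bigl(\mathcal{R}(P)+\mathcal{R}(Q)\bigr)$, then applies Lemma~\ref{lem:Range closure of TT and T}, which is just the $TT^*$ version of your column operator $Tx=(Px,Qx)^T$ with $T^*T=P+Q$. The difference is purely cosmetic, so no changes are needed.
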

\begin{proof}Put
\begin{eqnarray}\label{equ:inclusion of two projectons}T=\left(
 \begin{array}{cc}
 0 & 0 \\
 P & Q \\
 \end{array}\right)\in \mathcal{L}(H\oplus H).
\end{eqnarray}
Clearly,
\begin{eqnarray}\label{equ:relationship of the ranges of two operators-1}\mathcal{R}(T)=\{0\}\oplus \big(\mathcal{R}(P)+\mathcal{R}(Q)\big), \quad \mathcal{R}(TT^*)=\{0\}\oplus \big(\mathcal{R}(P+Q)\big).
\end{eqnarray}
The equations above together with Lemma~\ref{lem:Range closure of TT and T} yield \eqref{equ:closure of the summation of two projections}, which gives
\eqref{equ:range of P contained range of P plus Q} immediately.
\end{proof}

\begin{lem}{\rm \cite[Proposition~2.5]{Liu-Luo-Xu}}\label{lem:the strict topology of the projection of the range-1} Let $T\in\mathcal{L}(H)$ be positive such that $\overline{\mathcal{R}(T)}$ is orthogonally complemented in $H$. For every $n\in \mathbb{N}$, let $T_n:=\left(\frac{1}{n}I+T\right)^{-1}T$. Then $\lim\limits_{n\to\infty}T_n=P_{\overline{\mathcal{R}(T)}}$ in the strict topology, that is,
\begin{eqnarray*} \lim_{n\to\infty}\Vert T_nx-P_{\overline{\mathcal{R}(T)}}x\Vert=0\ \mbox{for all $x$ in $H$}.\end{eqnarray*}
\end{lem}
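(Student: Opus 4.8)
The plan is to reduce the asserted strict convergence to a uniformly bounded approximation argument on the dense submodule $\mathcal{R}(T)$. First I would record the elementary properties of $T_n$. Since $T\ge 0$, the operator $\tfrac1n I+T$ is bounded below by $\tfrac1n$, hence invertible in $\mathcal{L}(H)$, so $T_n$ is well defined, and $T_n=(\tfrac1n I+T)^{-1}T=T(\tfrac1n I+T)^{-1}$ because $T$ commutes with $(\tfrac1n I+T)^{-1}$. Consequently $T_n$ is the value at $T$ of the continuous function $f_n(t)=t/(\tfrac1n+t)$ on the spectrum of $T$; since $0\le f_n\le 1$ there, the continuous functional calculus gives that $T_n$ is self-adjoint with $0\le T_n\le I$, in particular $\|T_n\|\le 1$. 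Put $P:=P_{\overline{\mathcal{R}(T)}}$. As $T_n-P$ is self-adjoint we have $\|(T_n-P)^*x\|=\|(T_n-P)x\|$ for every $x$, so it suffices to prove $\|T_nx-Px\|\to 0$ for all $x\in H$.

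The key identity I would exploit is that, for every $z\in H$,
\begin{equation*}
T_n(Tz)-Tz=\Bigl[(\tfrac1n I+T)^{-1}T-I\Bigr]Tz=(\tfrac1n I+T)^{-1}\bigl(T-(\tfrac1n I+T)\bigr)Tz=-\tfrac1n\,T_nz,
\end{equation*}
so that $\|T_n(Tz)-Tz\|\le\tfrac1n\|z\|\to 0$; hence $T_nw\to w$ for every $w\in\mathcal{R}(T)$. Moreover, if $v\in\mathcal{N}(T)$ then $T_nv=(\tfrac1n I+T)^{-1}Tv=0$.

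It then remains to globalize. Since $T=T^*$ we have $\overline{\mathcal{R}(T)}^{\perp}=\mathcal{R}(T)^{\perp}=\mathcal{N}(T)$, and the hypothesis that $\overline{\mathcal{R}(T)}$ is orthogonally complemented yields $H=\overline{\mathcal{R}(T)}\dotplus\mathcal{N}(T)$, with $P$ the identity on $\overline{\mathcal{R}(T)}$ and zero on $\mathcal{N}(T)$. Writing $x=u+v$ with $u\in\overline{\mathcal{R}(T)}$, $v\in\mathcal{N}(T)$, we get $Px=u$ and $T_nx=T_nu$, so it suffices to show $T_nu\to u$. Given $\varepsilon>0$, pick $w\in\mathcal{R}(T)$ with $\|u-w\|<\varepsilon$; using $\|T_n\|\le 1$,
\begin{equation*}
\|T_nu-u\|\le\|T_n\|\,\|u-w\|+\|T_nw-w\|+\|w-u\|\le 2\varepsilon+\|T_nw-w\|,
\end{equation*}
whence $\limsup_{n\to\infty}\|T_nu-u\|\le 2\varepsilon$, and letting $\varepsilon\to 0$ gives $T_nu\to u$. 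Therefore $\|T_nx-Px\|\to 0$ for every $x\in H$, which is the desired strict convergence.

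I expect the only genuine subtlety to be the passage from $\mathcal{R}(T)$ to $\overline{\mathcal{R}(T)}$: the pointwise convergence $T_nw\to w$ is immediate on $\mathcal{R}(T)$ but does not transfer to the closure on its own, and it is precisely the uniform bound $\|T_n\|\le 1$ — which rests on $T\ge 0$ via the functional calculus — that lets the approximation argument close the gap. The orthogonal complementarity hypothesis is used only to produce the splitting $x=u+v$ and to identify $P$; without it $P_{\overline{\mathcal{R}(T)}}$ is not even defined.
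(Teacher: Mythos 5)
Your proof is correct: the uniform bound $\|T_n\|\le 1$ from the functional calculus, the resolvent identity $T_n(Tz)-Tz=-\tfrac1n T_nz$ giving convergence on $\mathcal{R}(T)$, the vanishing of $T_n$ on $\mathcal{N}(T)=\overline{\mathcal{R}(T)}^\perp$, and the $\varepsilon$-approximation across the orthogonal splitting $H=\overline{\mathcal{R}(T)}\dotplus\mathcal{N}(T)$ fit together without gaps, and the self-adjointness remark correctly reduces strict (strong$^*$) convergence to the strong seminorms. Note, however, that the paper itself gives no proof of this lemma; it is imported by citation from Liu--Luo--Xu (Proposition 2.5), so there is no in-paper argument to compare against --- your write-up is the standard self-contained proof one would expect to find in that reference.
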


Suppose that $P,Q\in\mathcal{L}(H)$ are two projections such that $\overline{\mathcal{R}(P+Q)}$ is orthogonally complemented in $H$. It is interesting to determine conditions under which $\overline{\mathcal{R}(I-P+I-Q)}$ is still orthogonally complemented in $H$. We provide such a result as follows.

\begin{thm}\label{The: equivalent condition of orthogonal }Let $P,Q\in\mathcal{L}(H)$ be two projections such that $\overline{\mathcal{R}(P+Q)}$ is orthogonally complemented in $H$. For every $n\in\mathbb{N}$, let $T_n=(P+Q+\frac{1}{n}I)^{-1}$ and
\begin{equation}\label{equ:defn of A_n--B_n^*}A_n=P-PT_n P,\quad B_n=PT_nQ, \quad C_n=Q-QT_n Q. \end{equation}
Then the following statements are equivalent:
\begin{enumerate}
\item[{\rm (i)}] one of $\{A_n\}, \{B_n\}$, and $\{C_n\}$ is convergent in the strict topology;
\item[{\rm (ii)}] all of $\{A_n\}, \{B_n\}$, and $\{C_n\}$ are convergent to the same limit in the strict topology;
\item[{\rm (iii)}] $\mathcal{R}(P)\cap \mathcal{R}(Q)$ is orthogonally complemented in $H$ such that
\begin{equation}\label{equ:conjecture of the orthogonal part}\big(\mathcal{R}(P)\cap \mathcal{R}(Q)\big)^\perp=\overline{\mathcal{R}(2I-P-Q)};\end{equation}
\item[{\rm (iv)}]$\overline{\mathcal{R}(2I-P-Q)}$ is orthogonally complemented in $H$.
\end{enumerate}
In each case,
\begin{eqnarray*}\lim_{n\to\infty}A_n=\lim_{n\to\infty}B_n=\lim_{n\to\infty}C_n=P_{\mathcal{R}(P)\cap \mathcal{R}(Q)}\ \mbox{in the strict topology.}\end{eqnarray*}
\end{thm}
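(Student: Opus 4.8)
The plan is to establish the cycle (iv) $\Rightarrow$ (iii) $\Rightarrow$ (ii) $\Rightarrow$ (i) $\Rightarrow$ (iv), together with the explicit identification of the common strict limit. The starting observation is that $P+Q$ is a positive operator whose range closure $\overline{\mathcal{R}(P+Q)}$ is, by hypothesis, orthogonally complemented; hence Lemma~\ref{lem:the strict topology of the projection of the range-1} applies to $T=P+Q$ and gives $\lim_{n\to\infty}(P+Q)T_n = \lim_{n\to\infty}T_n(P+Q) = P_{\overline{\mathcal{R}(P+Q)}}$ in the strict topology, where $T_n=(P+Q+\tfrac1n I)^{-1}$. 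From $PT_n(P+Q)=PT_nP+PT_nQ$ one reads off $A_n+B_n = P-PT_n(P+Q)P \cdot(\cdots)$; more precisely, since $T_n$ commutes with $P+Q$, a short manipulation shows $A_n = P - PT_nP$ and $B_n=PT_nQ$ satisfy $A_n - B_n = P T_n(P-Q) $-type identities, and likewise $C_n-B_n^*$ is controlled. The clean way to package all of this: note $P = P(P+Q+\tfrac1n I)T_n = (P^2+PQ+\tfrac1n P)T_n = (P+PQ+\tfrac1n P)T_n$, so $A_n = P-PT_nP = (PQ + \tfrac1n P)T_n = PT_n Q\cdot(\cdots)$. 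Working this through yields the key algebraic fact $A_n = B_n + \tfrac1n PT_n P$ and $C_n = B_n^* + \tfrac1n QT_nQ$, and since $\|\tfrac1n PT_nP\|\le \tfrac1n\|T_n\|\le 1$ is not enough, one instead uses $\tfrac1n T_n \to 0$ strictly (because $\tfrac1n T_n x = \tfrac1n(P+Q+\tfrac1n I)^{-1}x$ and $(P+Q+\tfrac1n I)^{-1}$ is a bounded net applied after multiplication by $\tfrac1n$, with the relevant estimate coming from functional calculus on the spectrum of $P+Q$). This shows $\{A_n\}$, $\{B_n\}$, $\{B_n^*\}$, $\{C_n\}$ have the same strict limit whenever any one of them converges, which is the equivalence (i) $\Leftrightarrow$ (ii).

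Next, for (ii) $\Rightarrow$ (iii) and the identification of the limit: suppose $B_n = PT_nQ \to X$ strictly. One checks $X$ is a projection by passing to the limit in relations such as $B_n = B_n^* $ in the limit (from $A_n$, $C_n$ convergence and self-adjointness of $A_n,C_n$), and $B_n^2 \to X$ as well, using $PT_nQ\cdot PT_nQ$ and the resolvent identity together with $\tfrac1n T_n\to 0$. To see $\mathcal{R}(X)=\mathcal{R}(P)\cap\mathcal{R}(Q)$: from $PB_n = B_n$ and $B_n\cdot(\text{something})$ one gets $PX=X$ and $XP=X$ hence $\mathcal{R}(X)\subseteq\mathcal{R}(P)$; the symmetric computation with $QT_nQ$ and $C_n$ gives $\mathcal{R}(X)\subseteq\mathcal{R}(Q)$; conversely if $x\in\mathcal{R}(P)\cap\mathcal{R}(Q)$ then $(P+Q)x=2x$, so $T_n x = (2+\tfrac1n)^{-1}x$ and $B_nx = PT_nQx = P(2+\tfrac1n)^{-1}x \to x$, whence $Xx=x$. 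Thus $X=P_{\mathcal{R}(P)\cap\mathcal{R}(Q)}$ and in particular $\mathcal{R}(P)\cap\mathcal{R}(Q)$ is orthogonally complemented. For the identity $\big(\mathcal{R}(P)\cap\mathcal{R}(Q)\big)^\perp = \overline{\mathcal{R}(2I-P-Q)}$: since $I-X$ is the projection onto the complement, it suffices to show $\mathcal{R}(I-X) = \overline{\mathcal{R}(2I-P-Q)}$; but $2I-P-Q = (I-P)+(I-Q)$ is positive, $I-X$ is a projection, and one computes $(I-X)(2I-P-Q) = 2I-P-Q$ in the strict limit (using $X(2I-P-Q)=0$, which follows from $\mathcal{R}(X)\subseteq\mathcal{R}(P)\cap\mathcal{R}(Q)$ and $2I-P-Q$ annihilating that intersection), giving $\overline{\mathcal{R}(2I-P-Q)}\subseteq\mathcal{R}(I-X)$; the reverse inclusion comes from $(2I-P-Q)T_n' \to I-X$ for $T_n'=(2I-P-Q+\tfrac1n I)^{-1}$ via Lemma~\ref{lem:the strict topology of the projection of the range-1} applied to $2I-P-Q$, which is exactly where assumption (iv) — or the conclusion of the previous step — is used.

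The implications (iii) $\Rightarrow$ (iv) and (iv) $\Rightarrow$ (iii) close the loop: (iii) $\Rightarrow$ (iv) is immediate since (iii) explicitly writes $\overline{\mathcal{R}(2I-P-Q)}$ as the orthogonal complement $\big(\mathcal{R}(P)\cap\mathcal{R}(Q)\big)^\perp$, and orthogonal complements of closed submodules that are themselves orthogonally complemented are orthogonally complemented (Remark~\ref{rem:double orthogonality}). For (iv) $\Rightarrow$ (i): if $\overline{\mathcal{R}(2I-P-Q)}$ is orthogonally complemented, apply Lemma~\ref{lem:the strict topology of the projection of the range-1} to the positive operator $2I-P-Q$ to get $(2I-P-Q+\tfrac1n I)^{-1}(2I-P-Q)\to P_{\overline{\mathcal{R}(2I-P-Q)}}$ strictly; now $(2I-P-Q+\tfrac1n I)^{-1}$ relates to $T_n$ by a spectral substitution $\lambda \mapsto 2-\lambda$ on $\mathrm{sp}(P+Q)\subseteq[0,2]$, and chasing this through the formulas for $A_n,B_n,C_n$ shows $\{A_n\}$ converges strictly to $I - P_{\overline{\mathcal{R}(2I-P-Q)}}$ (applied appropriately), yielding (i). The main obstacle is the delicate bookkeeping with the two resolvent families $T_n$ and $(2I-P-Q+\tfrac1n I)^{-1}$ and proving that the various $\tfrac1n$-error terms vanish in the strict topology — one genuinely needs that $\overline{\mathcal{R}(P+Q)}$ (resp. $\overline{\mathcal{R}(2I-P-Q)}$) is complemented to control $\tfrac1n T_n$ on the orthogonal complement, since on a general Hilbert $C^*$-module $\tfrac1n (P+Q+\tfrac1n I)^{-1}$ need not go to zero in norm; the resolution is to split $x\in H$ using the orthogonal decomposition afforded by the hypothesis and estimate the two pieces separately, exactly as in the proof of Lemma~\ref{lem:the strict topology of the projection of the range-1}.
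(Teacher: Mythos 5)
Your overall architecture is the same as the paper's ((i)$\Leftrightarrow$(ii) via algebraic identities plus the strict convergence of $(P+Q)T_n$ from Lemma~\ref{lem:the strict topology of the projection of the range-1}, then (ii)$\Rightarrow$(iii)$\Rightarrow$(iv)$\Rightarrow$(i)), but several steps are not correct as written. In the first step, your key analytic claim ``$\tfrac1n T_n\to 0$ strictly'' is false: $\tfrac1n T_n=I-(P+Q)T_n\to I-P_{\overline{\mathcal{R}(P+Q)}}$ strictly, and on $\mathcal{N}(P+Q)$ one has $\tfrac1n T_nx=x$, so your proposed remedy of ``controlling $\tfrac1n T_n$ on the orthogonal complement'' cannot work. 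Your identities are also slightly off: the correct ones are $A_n-B_n=\tfrac1n PT_n$ and $C_n-B_n^*=\tfrac1n QT_n$ (not $\tfrac1n PT_nP$, $\tfrac1n QT_nQ$). The step is salvageable precisely because of the prefactors: since $PP_{\overline{\mathcal{R}(P+Q)}}=P$ and $QP_{\overline{\mathcal{R}(P+Q)}}=Q$, one gets $\tfrac1n PT_n=P\big(I-(P+Q)T_n\big)\to 0$ strictly, which is essentially the paper's route via \eqref{eqn:the strict topology equals P}--\eqref{eqn:relationship2}. There is also a factor-of-two slip that propagates through your argument: for $x\in\mathcal{R}(P)\cap\mathcal{R}(Q)$ one has $B_nx=\big(2+\tfrac1n\big)^{-1}x\to\tfrac12 x$, not $x$; hence the common strict limit is $\tfrac12 P_{\mathcal{R}(P)\cap\mathcal{R}(Q)}$ and it is $2X$, not $X$, that is a projection (the paper consistently works with $2E$ and $2B_n$), so your later limits such as ``$A_n\to I-P_{\overline{\mathcal{R}(2I-P-Q)}}$'' are off by the same factor.

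The most serious gap is in (ii)$\Rightarrow$(iii): you obtain the hard inclusion $\big(\mathcal{R}(P)\cap\mathcal{R}(Q)\big)^\perp\subseteq\overline{\mathcal{R}(2I-P-Q)}$ by applying Lemma~\ref{lem:the strict topology of the projection of the range-1} to $2I-P-Q$, but that lemma requires $\overline{\mathcal{R}(2I-P-Q)}$ to be orthogonally complemented, which is exactly statement (iv) and is not available at that point (you even write that this is ``where assumption (iv) \dots is used''); since (iii) trivially yields (iv) via Remark~\ref{rem:double orthogonality}, this renders the implication circular. The paper avoids this with a purely algebraic computation: $(I-2E)x=\lim_{n}(I-A_n-B_n^*)x=\lim_{n}\big[(I-P)(x-T_nPx)+(I-Q)T_nPx\big]\in\overline{\mathcal{R}(I-P)+\mathcal{R}(I-Q)}=\overline{\mathcal{R}(2I-P-Q)}$ by Lemma~\ref{lem:R(P+Q)=R(P)+R(Q)}; this idea is missing from your proposal and is needed to prove \eqref{equ:conjecture of the orthogonal part}. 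Finally, in (iv)$\Rightarrow$(i) the ``spectral substitution'' relating $(2I-P-Q+\tfrac1n I)^{-1}$ to $T_n$ is never carried out and does not by itself yield strict convergence of $B_n$; what is required (and what the paper supplies) is the decomposition \eqref{equ:new decomposition of H}, the computation $B_nx\to 0$ for $x\in\mathcal{R}(2I-P-Q)$ (which uses the hypothesis on $\overline{\mathcal{R}(P+Q)}$, not on $2I-P-Q$), the uniform bound $\Vert B_n\Vert\le\Vert PT_n^{1/2}\Vert\,\Vert T_n^{1/2}Q\Vert<1$ to pass from the dense submodule $\mathcal{R}(2I-P-Q)$ to its closure, and $2B_nx\to x$ on the intersection; the uniform-boundedness ingredient is absent from your sketch, and without it pointwise convergence on a dense subset does not extend to the closure.
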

\begin{proof} According to \eqref{equ:range of P contained range of P plus Q}, we have
\begin{equation}\label{equ:ranges P and Q contained in the closure}P_{\overline{\mathcal{R}(P+Q)}}P=PP_{\overline{\mathcal{R}(P+Q)}}=P, \quad P_{\overline{\mathcal{R}(P+Q)}}Q=QP_{\overline{\mathcal{R}(P+Q)}}=Q.
\end{equation}
Also, by Lemma~\ref{lem:the strict topology of the projection of the range-1} we have
\begin{eqnarray*}\lim\limits_{n\to\infty}(P+Q)T_n=P_{\overline{\mathcal{R}(P+Q)}}\ \mbox{in the strict topology},
\end{eqnarray*}
which is combined with \eqref{equ:ranges P and Q contained in the closure} to conclude that in the strict topology,
\begin{eqnarray}\label{eqn:the strict topology equals P} \lim\limits_{n\to\infty}\big[(P+Q)T_n P-P\big]=0, \quad \lim\limits_{n\to\infty}\big[(P+Q)T_n Q-Q\big]=0.
\end{eqnarray}
Note that for each $n\in\mathbb{N}$,
\begin{eqnarray}\label{eqn:relationship1} A_n=\big[P-(P+Q)T_nP\big]+B_n^*,\quad C_n=\big[Q-(P+Q)T_nQ\big]+B_n,
\end{eqnarray}
and
\begin{eqnarray}
\label{eqn:relationship2} B_n=\big[(P+Q)T_n Q-Q\big]+\big[Q-QT_n(P+Q)\big]+B_n^*.
\end{eqnarray}
Hence the equivalence of (i) and (ii) can be derived from the equations above together with \eqref{eqn:the strict topology equals P}, \eqref{eqn:relationship1}, and \eqref{eqn:relationship2}.
Furthermore, if any of $A_n, B_n$ and $C_n$ has the limit in the strict topology, then all of them will have the same limit in the strict topology.

(ii)$\Longrightarrow$(iii): Suppose that
\begin{eqnarray}\label{limts A_n=B_n=C_n=B_n^*}\lim\limits_{n\to\infty}A_n=\lim\limits_{n\to\infty}B_n=\lim\limits_{n\to\infty}C_n=E\ \mbox{in the strict topology}.
\end{eqnarray} From the definitions of $B_n$ and $C_n$ in \eqref{equ:defn of A_n--B_n^*}, we have $\mathcal{R}(B_n)\subseteq \mathcal{R}(P)$ and $\mathcal{R}(C_n)\subseteq \mathcal{R}(Q)$ for each $n$. Employing \eqref{limts A_n=B_n=C_n=B_n^*} we get $\mathcal{R}(E)\subseteq \mathcal{R}(P) \cap \mathcal{R}(Q)$.
Conversely, given each $x\in \mathcal{R}(P) \cap \mathcal{R}(Q)$, we have
$$\Big(P+Q+\frac1n I\Big)x=\Big(2+\frac1n\Big)x\ \mbox{for every $n\in\mathbb{N}$},$$
which means that
$T_n x=\frac{1}{2+\frac1n}x$ and thus $B_nx=\frac{1}{2+\frac1n}x$. Hence
\begin{equation}\label{equ:2E is an idempotent}2Ex=2\lim\limits_{n\to\infty} B_n x=x.\end{equation} This ensures that $\mathcal{R}(E)=\mathcal{R}(P) \cap \mathcal{R}(Q)$.

Now for every $x\in H$, since $Ex\in \mathcal{R}(P) \cap \mathcal{R}(Q)$, from
\eqref{equ:2E is an idempotent} we can get
$$(2E)^2x=(2E)x.$$ Hence the operator $2E$ is an idempotent. Moreover, as
$$2E=\lim\limits_{n\to\infty} (2A_n)\ \mbox{in the strict topology and}\ A_n=A_n^*\ \mbox{for every $n\in\mathbb{N}$},$$ we see that $2E$ is also self-adjoint. Therefore,
$2E$ is actually a projection. It follows that $\mathcal{R}(P) \cap \mathcal{R}(Q)$ is orthogonally complemented in $H$ and
\begin{equation}\label{equ:orthononal complement-1}\big(\mathcal{R}(P) \cap \mathcal{R}(Q)\big)^\perp=\mathcal{R}(I-2E).\end{equation}

Next, we prove that \eqref{equ:conjecture of the orthogonal part} is valid. It is obvious that
$$\overline{\mathcal{R}(2I-P-Q)}=\overline{\mathcal{R}(I-P+I-Q)}\subseteq \big(\mathcal{R}(P)\cap \mathcal{R}(Q)\big)^\perp.$$
On the other hand, we have
$\lim\limits_{n\to\infty}B_n^*=E^*=E$ in the strict topology. Therefore, given any $x\in H$, we have
\begin{eqnarray*}(I-2E)x&=&\lim\limits_{n\to\infty}(I-A_n-B_n^*)x \nonumber\\
&=&\lim\limits_{n\to\infty} \big[(I-P)x+(P-Q)T_nPx\big] \nonumber\\
&=&\lim\limits_{n\to\infty} \big[(I-P)(x-T_n Px)+(I-Q)T_nPx\big]\nonumber\\
&\in& \overline{\mathcal{R}(I-P)+\mathcal{R}(I-Q)}.
\end{eqnarray*}
Due to \eqref{equ:orthononal complement-1} and \eqref{equ:closure of the summation of two projections}, we have
$$\big(\mathcal{R}(P)\cap \mathcal{R}(Q)\big)^\perp\subseteq
\overline{\mathcal{R}(I-P)+\mathcal{R}(I-Q)}=\overline{\mathcal{R}(2I-P-Q)}.$$
The proof of \eqref{equ:conjecture of the orthogonal part} is then finished.

(iii)$\Longrightarrow$(iv) It is illustrated by Remark~\ref{rem:double orthogonality}.

(iv)$\Longrightarrow$(i): Assume that $\overline{\mathcal{R}(2I-P-Q)}$ is orthogonally complemented in $H$.
Since $\overline{\mathcal{R}(2I-P-Q)}^\bot=\mathcal{N}(I-P)\cap\mathcal{N}(I-Q)=\mathcal{R}(P)\cap \mathcal{R}(Q)$, by \eqref{equ:double orthogonality}
we know that $\mathcal{R}(P)\cap \mathcal{R}(Q)$ is also orthogonally complemented in $H$ such that \eqref{equ:conjecture of the orthogonal part} is satisfied.
So the notation $P_{\mathcal{R}(P)\cap \mathcal{R}(Q)}$ is meaningful, and $H$ can be decomposed orthogonally as
\begin{equation}\label{equ:new decomposition of H}H=\overline{\mathcal{R}(2I-P-Q)}\dotplus \mathcal{R}(P)\cap \mathcal{R}(Q).\end{equation}
In what follows, we prove that
\begin{equation}\label{equ:limit value of B n}\lim\limits_{n\to\infty} 2B_n =P_{\mathcal{R}(P)\cap \mathcal{R}(Q)}\ \mbox{in the strict topology}.\end{equation}

First, given any $x\in \mathcal{R}(2I-P-Q)$, there exists some $u\in H$ such that $x=(2I-P-Q)u=(I-P+I-Q)u$. Then
\begin{align}\lim\limits_{n\to\infty} B_n x&=\lim\limits_{n\to\infty} B_n (I-P)u=\lim\limits_{n\to\infty} \big[PT_n(P+Q)-PT_nP\big](I-P)u\nonumber\\
\label{eqn:tends to zero-1} &=\lim\limits_{n\to\infty} PT_n(P+Q)(I-P)u=P(I-P)u=0.
\end{align}
Note that for every $n\in\mathbb{N}$, we have
\begin{eqnarray*}\Vert T_n^\frac12 P\Vert^2=\Vert T_n^\frac12 P\cdot (T_n^\frac12 P)^*\Vert=\Vert T_n^\frac12 P T_n^\frac12\Vert\le \Vert T_n^\frac12 (P+Q)T_n^\frac12\Vert<1,
\end{eqnarray*}
so $\Vert PT_n^\frac12\Vert=\Vert (PT_n^\frac12)^*\Vert=\Vert T_n^\frac12 P\Vert<1$. Similarly, $\Vert T_n^\frac12 Q\Vert<1$.
As a result,
\begin{eqnarray*}\label{equ:norm of B_n is bounded}\Vert B_n\Vert\le \Vert PT_n^\frac12\Vert\cdot \Vert T_n^\frac12 Q\Vert<1,\ \mbox{for every $n\in\mathbb{N}$}.\end{eqnarray*}
The boundedness of $\{B_n\}$ together with \eqref{eqn:tends to zero-1} indicates
\begin{equation} \label{equ:tends to zero-2}\lim\limits_{n\to\infty} 2B_n x=0\ \mbox{for every $x\in \overline{\mathcal{R}(2I-P-Q)}$}.\end{equation}

Next, from the proof of (ii)$\Longrightarrow$(iii) we know that
\begin{equation}\label{equ:tends to half value}\lim\limits_{n\to\infty} 2B_n x=x, \ \mbox{for every $x\in \mathcal{R}(P)\cap\mathcal{P}(Q)$}.\end{equation}
The assertion $\lim\limits_{n\to\infty} 2B_nx =P_{\mathcal{R}(P)\cap \mathcal{R}(Q)}x\,\,(x\in H)$ follows from \eqref{equ:new decomposition of H}, \eqref{equ:tends to zero-2} and \eqref{equ:tends to half value}. Similarly, one can show that $\lim\limits_{n\to\infty} 2B_n^*x =P_{\mathcal{R}(P)\cap \mathcal{R}(Q)}x\,\,(x\in H)$. Thus \eqref{equ:limit value of B n} holds true.
\end{proof}

\begin{rem}{\rm It is remarkable that
there exist a Hilbert $C^*$-module $H$ and two projections $P,Q\in\mathcal{L}(H)$ such that $\mathcal{R}(P)\cap \mathcal{R}(Q)$ is orthogonally complemented in $H$, whereas \eqref{equ:conjecture of the orthogonal part} is not true. Such an example is constructed in the next section.
}\end{rem}

\section{An example of extremely discomplementable projections}\label{sec:discomplementable projections}
 Inspired by \cite[Section~3]{Manuilov-Moslehian-Xu}, we construct a Hilbert $C^*$-module $H$ and two extremely discomplementable projections on it as follows.

Let $M_2(\mathbb{C})$ be the set of all $2\times 2$ complex matrices and $\|\cdot\|$ be the spectral norm on $M_2(\mathbb{C})$.
Let $A=C\big([0,1];M_2(\mathbb{C})\big)$ be the set of all continuous matrix-valued functions from $[0,1]$ to $M_2(\mathbb{C})$. Set
\begin{align*}x^*(t)=\big(x(t)\big)^*\ \mbox{and}\ \|x\|=\max_{0\le s\le 1}\|x(s)\|,\ \mbox{for every $x\in A$ and each $t\in [0,1]$.}\end{align*}
With the $*$-operation above together with the usual algebraic operations, $A$ is a unital $C^*$-algebra.
Therefore, $A$ itself becomes a Hilbert $A$-module with the usual $A$-valued inner product given by
$$\langle x,y\rangle=x^*y,\ \mbox{for every $x,y\in A$}.$$

Let $e$ be the unit of $A$, that is, $e(t)=\left(
 \begin{array}{cc}
 1 & 0 \\
 0 & 1 \\
 \end{array}
 \right)$ for every $t\in [0,1]$.
It is known that $A\cong \mathcal{L}(A)$ via $a\to L_a$, where
$L_a(x)=ax\ \mbox{for every $a,x\in A$}$; furthermore $(L_a)^*=L_{a^*}$. Indeed,
for every $T\in\mathcal{L}(A)$ and $x\in A$, we have
$$Tx=T(ex)=T(e)x=L_ax,\ \mbox{where $a=T(e)$.}$$

For simplicity, we put
$$c_t=\cos\frac{\pi}{2}t\ \mbox{and}\ s_t=\sin\frac{\pi}{2}t,\ \mbox{for each $t\in[0,1]$}.$$ Let
$\widetilde{P},\widetilde{Q}\in A$ be determined by the matrix-valued functions
\begin{eqnarray*}\label{equ:defn of P t and Q t}\widetilde{P}(t)\equiv\left(\begin{matrix}1&0\\0&0\end{matrix}\right)\ \mbox{and}\ \widetilde{Q}(t)=\left(\begin{matrix}c_t^2&s_tc_t\\s_tc_t&s_t^2\end{matrix}\right), \ \mbox{for each $t\in [0,1]$}.\end{eqnarray*}
Then both $P=L_{\widetilde{P}}$ and $Q=L_{\widetilde{Q}}$ are projections in $\mathcal{L}(A)$. Let $x\in A$ be determined by $x(t)=\big(x_{ij}(t)\big)_{1\le i,j\le 2}$, where each $x_{ij}(t)$ is a continuous complex-valued function on $[0,1]$. If $x\in \mathcal{N}(P)$, then obviously $x_{11}(t)= 0$ and $x_{12}(t)= 0$. So if furthermore $x\in\mathcal{N}(Q)$, then
$$s_t^2\, x_{21}(t)=0\ \mbox{and}\ s_t^2\, x_{22}(t)=0\ \mbox{for each $t\in [0,1]$}.$$
Note that $s_t\ne 0$ for every $t\in (0,1]$, so the equations above together with the continuity of both $x_{21}(t)$ and $x_{22}(t)$ at $t=0$ yield
 $x_{21}(t)= 0$ and $x_{22}(t)= 0$. Therefore, $\mathcal{N}(P)\cap \mathcal{N}(Q)=\{0\}$; or equivalently,
$\mathcal{N}(P+Q)=\{0\}$.

Consider an element $x\in A$ having the form $x(t)=\big(x_{ij}(t)\big)_{1\le i,j\le 2}$. Since $s_0=0$, we have
\begin{align*}\|(P+Q)\,x-e\|=\| \widetilde{P}x+\widetilde{Q}x-e\|\ge\max_{0\le t\le 1}\big|s_tc_t x_{12}(t)+s_t^2 x_{22}(t)-1\big|\ge 1.
\end{align*}
 It follows immediately that $e\notin \overline{\mathcal{R}(P+Q)}$, whence
$$\mathcal{N}(P+Q)\dotplus\overline{\mathcal{R}(P+Q)}=\overline{\mathcal{R}(P+Q)}\ne A.$$
Therefore, $\overline{\mathcal{R}(P+Q)}$ is not orthogonally complemented.

Similarly, it can be proved that
$$\mathcal{N}(P+I-Q)=\mathcal{N}(I-P+Q)=\mathcal{N}(I-P+I-Q)=\{0\},$$
and the unit $e$ is also not contained in any one of the remaining three closures in \eqref{equ:four closures}.

\section{Halmos' two projections theorem for Hilbert $C^*$-module operators}\label{sec:Halmos' two projections theorem}
The purpose of this section is to generalize Halmos' two projections theorem to the case of the Hilbert $C^*$-module.
We start this section with the following lemma.

\begin{lem}\label{lem:derivation of the projection-1} Let $P,Q\in\mathcal{L}(H)$ be two projections such that $\overline{\mathcal{R}(I-Q+P)}$ is
orthogonally complemented in $H$. Then $\overline{\mathcal{R}(QP)}$ is also orthogonally complemented in $H$
and
\begin{equation}\label{equ:formula for the projection on QP}P_{\overline{\mathcal{R}(QP)}}=Q-P_{\mathcal{R}(Q)\cap \mathcal{N}(P)}.
\end{equation}
\end{lem}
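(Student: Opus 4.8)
The plan is to base everything on the elementary identity $QP=Q(I-Q+P)$, which lets the orthogonal complementability of $\overline{\mathcal{R}(I-Q+P)}$ be transported directly onto $\overline{\mathcal{R}(QP)}$.

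First I would analyse the operator $G:=I-Q+P=P+(I-Q)$. It is positive, being a sum of two projections, and since $\langle Gx,x\rangle=\langle Px,x\rangle+\langle(I-Q)x,x\rangle$ is a sum of two nonnegative elements of $A$, the equation $Gx=0$ forces $Px=0$ and $(I-Q)x=0$; hence $\mathcal{N}(G)=\mathcal{R}(Q)\cap\mathcal{N}(P)$. By hypothesis $\overline{\mathcal{R}(G)}$ is orthogonally complemented, so I put $\Pi:=P_{\overline{\mathcal{R}(G)}}$ and $E:=I-\Pi$; then
\[\mathcal{R}(E)=\mathcal{N}(\Pi)=\overline{\mathcal{R}(G)}^{\perp}=\mathcal{N}(G^{*})=\mathcal{N}(G)=\mathcal{R}(Q)\cap\mathcal{N}(P).\]
In particular $\mathcal{R}(Q)\cap\mathcal{N}(P)$ is orthogonally complemented with $E=P_{\mathcal{R}(Q)\cap\mathcal{N}(P)}$, so the right-hand side of \eqref{equ:formula for the projection on QP} is meaningful, and $\Pi=I-E$.

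Next, since $\mathcal{R}(E)\subseteq\mathcal{R}(Q)$ I get $QE=E$ and, taking adjoints, $EQ=E$; hence $Q\Pi=Q(I-E)=Q-QE=Q-E$, and a one-line check shows that $Q-E$ is a self-adjoint idempotent, i.e. a projection, so $\mathcal{R}(Q-E)$ is orthogonally complemented with $P_{\mathcal{R}(Q-E)}=Q-E$. Finally, from $QG=Q(I-Q+P)=QP$ one has $\mathcal{R}(QP)=Q\bigl(\mathcal{R}(G)\bigr)$, and since $Q$ is bounded while $\mathcal{R}(G)$ is dense in $\overline{\mathcal{R}(G)}=\mathcal{R}(\Pi)$, passing to closures yields
\[\overline{\mathcal{R}(QP)}=\overline{Q(\mathcal{R}(G))}=\overline{Q(\mathcal{R}(\Pi))}=\overline{\mathcal{R}(Q\Pi)}=\overline{\mathcal{R}(Q-E)}=\mathcal{R}(Q-E),\]
the last equality because the range of a projection is closed. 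This simultaneously shows that $\overline{\mathcal{R}(QP)}$ is orthogonally complemented and that $P_{\overline{\mathcal{R}(QP)}}=Q-E=Q-P_{\mathcal{R}(Q)\cap\mathcal{N}(P)}$, which is \eqref{equ:formula for the projection on QP}.

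The step I expect to demand the most care is the closure identity $\overline{Q(\mathcal{R}(G))}=\overline{Q(\mathcal{R}(\Pi))}$ — that is, recognising $\overline{\mathcal{R}(QP)}$ as the range of the genuine projection $Q\Pi$. A natural but invalid shortcut would be to observe that $\overline{\mathcal{R}(QP)}$ and $\mathcal{R}(Q)\cap\mathcal{N}(P)$ are mutually orthogonal submodules of $\mathcal{R}(Q)$ and conclude that $\mathcal{R}(Q)=\overline{\mathcal{R}(QP)}\dotplus\bigl(\mathcal{R}(Q)\cap\mathcal{N}(P)\bigr)$: over a general $C^{*}$-algebra a closed submodule may fail to be complemented even when its orthogonal complement is, so mere orthogonality does not suffice, and (unlike in the Hilbert space case) no Pythagorean-type argument is available. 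The identity $QP=Q(I-Q+P)$ is exactly what does the real work. The remaining ingredients are routine: $\mathcal{N}(I-Q)=\mathcal{R}(Q)$, $\overline{\mathcal{R}(T)}^{\perp}=\mathcal{N}(T^{*})$, and the continuity fact that $\overline{Q(\mathcal{R}(G))}=\overline{Q(\overline{\mathcal{R}(G)})}$.
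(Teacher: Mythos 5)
Your proof is correct, and its verifiable steps all check out: $\mathcal{N}(I-Q+P)=\mathcal{R}(Q)\cap\mathcal{N}(P)$, hence $E:=I-P_{\overline{\mathcal{R}(I-Q+P)}}=P_{\mathcal{R}(Q)\cap\mathcal{N}(P)}$; $QE=EQ=E$, so $Q-E=QP_{\overline{\mathcal{R}(I-Q+P)}}$ is a projection; and the density argument based on $QP=Q(I-Q+P)$ identifies $\overline{\mathcal{R}(QP)}$ with $\mathcal{R}(Q-E)$. The underlying mechanism is the same as the paper's — both proofs rest on the identity $QP=Q(I-Q+P)$ and on the orthogonal decomposition $H=\overline{\mathcal{R}(I-Q+P)}\dotplus\bigl(\mathcal{R}(Q)\cap\mathcal{N}(P)\bigr)$ — but the packaging differs. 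The paper argues at the level of submodules and elements: it shows $\mathcal{R}(Q)=\overline{\mathcal{R}(QP)}+\mathcal{R}(Q)\cap\mathcal{N}(P)$ by decomposing an arbitrary $x$, pushing a sequence $(I-Q+P)x_n\to u$ through $Q$, and then verifies the orthogonality $\overline{\mathcal{R}(QP)}\perp\mathcal{R}(Q)\cap\mathcal{N}(P)$ by a direct inner-product computation, finally reading \eqref{equ:formula for the projection on QP} off the orthogonal decomposition of $\mathcal{R}(Q)$. You instead work at the operator level: you exhibit $Q-E$ as an adjointable projection and show its range \emph{is} $\overline{\mathcal{R}(QP)}$. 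What your version buys is that the orthogonal complementability of $\overline{\mathcal{R}(QP)}$ and the formula for its projection come out in one stroke (a closed submodule is complemented precisely when it is the range of a projection, as in Remark~\ref{rem:double orthogonality}), whereas the paper's final step — passing from $\mathcal{R}(Q)=\overline{\mathcal{R}(QP)}\dotplus\mathcal{R}(Q)\cap\mathcal{N}(P)$ to \eqref{equ:formula for the projection on QP} — is left as a short implicit verification; what the paper's elementwise route buys is that it isolates the decomposition \eqref{equ:range Q=range QP} of $\mathcal{R}(Q)$, which is reused in the proof of Lemma~\ref{lem:derivation of the projection-2}. Your caution about the invalid Pythagorean-style shortcut is also well placed.
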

\begin{proof} First, we prove that
\begin{equation}\label{equ:range Q=range QP}\mathcal{R}(Q)=\overline{\mathcal{R}(QP)}+\mathcal{R}(Q)\cap \mathcal{N}(P).\end{equation}
Indeed, it is clear that $$\mathcal{N}(I-Q+P)=\mathcal{N}(I-Q)\cap \mathcal{N}(P)=\mathcal{R}(Q)\cap \mathcal{N}(P),$$
which leads to the orthogonal decomposition of $H$ as
\begin{eqnarray*}\label{equ:decomposition of H-11}H=\overline{\mathcal{R}(I-Q+P)}\dotplus \mathcal{R}(Q)\cap \mathcal{N}(P),
\end{eqnarray*}
since $\overline{\mathcal{R}(I-Q+P)}$ is orthogonally complemented. As a result, the notation of $P_{\mathcal{R}(Q)\cap \mathcal{N}(P)}$
is meaningful.

Now, given any $x\in H$, $x$ can be decomposed as
\begin{eqnarray*}\label{equ:x fenjie}x=u+v\ \mbox{for some $u\in \overline{\mathcal{R}(I-Q+P)}$ and $v\in \mathcal{R}(Q)\cap \mathcal{N}(P)$}.
\end{eqnarray*}
Thus
\begin{equation}\label{equ:Qx}Qx=Qu+Qv=Qu+v\in Qu+\mathcal{R}(Q)\cap \mathcal{N}(P).
\end{equation}
As $u\in \overline{\mathcal{R}(I-Q+P)}$, there exists a sequence $\{x_n\}\subseteq H$ such that $(I-Q+P)x_n\rightarrow u.$ Then $QPx_n=Q(I-Q+P)x_n\rightarrow Qu$,
and hence $Qu\in \overline{\mathcal{R}(QP)}$.
It follows from \eqref{equ:Qx} that
\begin{eqnarray*}\label{Qx subset}Qx\in \overline{\mathcal{R}(QP)}+\mathcal{R}(Q)\cap \mathcal{N}(P),
\end{eqnarray*}
which gives \eqref{equ:range Q=range QP} since $x\in H$ is arbitrary and $\mathcal{R}(QP)\subseteq \mathcal{R}(Q)$.

Next, we prove that
\begin{equation}\label{equ:check for the orthogonality-1}\overline{\mathcal{R}(QP)}\bot \mathcal{R}(Q)\cap \mathcal{N}(P).\end{equation}
In fact, given any $s\in H$ and $t\in \mathcal{R}(Q)\cap \mathcal{N}(P)$, we have
\begin{eqnarray*}\label{inner product of t and QPs}\langle QPs,t\rangle=\langle s,PQt\rangle=\langle s,Pt\rangle=\langle s,0\rangle=0.
\end{eqnarray*}
The proof of \eqref{equ:check for the orthogonality-1} is then finished by the continuity of the $A$-valued inner product with respect to each variable.

Finally, it follows from \eqref{equ:range Q=range QP} and \eqref{equ:check for the orthogonality-1} that
\begin{eqnarray*}\label{equ:range Q=range QP-orthogonal-1}\mathcal{R}(Q)=\overline{\mathcal{R}(QP)}\dotplus \mathcal{R}(Q)\cap \mathcal{N}(P).
\end{eqnarray*}
Hence \eqref{equ:formula for the projection on QP} is satisfied.
\end{proof}

\begin{lem}\label{lem:derivation of the projection-2} Let $P,Q\in\mathcal{L}(H)$ be two projections such that $\overline{\mathcal{R}(2I-P-Q)}$ is
orthogonally complemented in $H$. Then $\mathcal{R}(P)\cap \mathcal{R}(Q)$ is also orthogonally complemented in $H$ and an orthogonal decomposition of
$\overline{\mathcal{R}(QP)}$ can be given as
\begin{equation}\label{equorthogonal decomposition of the range of QP}\overline{\mathcal{R}(QP)}= \overline{\mathcal{R}\big(QP(I-Q)\big)}\dotplus \mathcal{R}(P)\cap \mathcal{R}(Q).\end{equation}
\end{lem}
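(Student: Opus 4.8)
The plan is to bootstrap from Lemma~\ref{lem:derivation of the projection-1} together with Theorem~\ref{The: equivalent condition of orthogonal }. First I would observe that $\overline{\mathcal{R}(2I-P-Q)}=\overline{\mathcal{R}((I-P)+(I-Q))}$, so by hypothesis the module $\overline{\mathcal{R}((I-P)+(I-Q))}$ is orthogonally complemented; applying Theorem~\ref{The: equivalent condition of orthogonal } with $P$ replaced by $I-P$ and $Q$ replaced by $I-Q$ (equivalently, using the equivalence (iv)$\Leftrightarrow$(iii) directly), one gets that $\mathcal{R}(P)\cap\mathcal{R}(Q)=\mathcal{N}(I-P)\cap\mathcal{N}(I-Q)$ is orthogonally complemented in $H$ and that
\[
\big(\mathcal{R}(P)\cap\mathcal{R}(Q)\big)^\perp=\overline{\mathcal{R}(2I-P-Q)}.
\]
This already disposes of the first assertion. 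It remains to produce the orthogonal decomposition \eqref{equorthogonal decomposition of the range of QP}.

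For the decomposition, I would first check that $\overline{\mathcal{R}(QP)}$ is itself orthogonally complemented, so that the statement makes sense: since $\overline{\mathcal{R}(2I-P-Q)}$ is orthogonally complemented and $\mathcal{R}(2I-P-Q)\subseteq\mathcal{R}(I-Q+P)+\mathcal{R}(I-Q)$ — more precisely, one shows $\overline{\mathcal{R}(I-Q+P)}$ is orthogonally complemented as a consequence (for instance $\mathcal{R}(I-Q+P)\subseteq \overline{\mathcal{R}(2I-P-Q)}\dotplus(\mathcal{R}(P)\cap\mathcal{R}(Q))$ fails in general, so instead I would argue via $\mathcal{N}(I-Q+P)=\mathcal{R}(Q)\cap\mathcal{N}(P)$ and the inclusion $\overline{\mathcal{R}(I-Q+P)}\subseteq\overline{\mathcal{R}(I-Q)+\mathcal{R}(P)}$, comparing orthogonal complements) — then Lemma~\ref{lem:derivation of the projection-1} gives both the orthogonal complementarity of $\overline{\mathcal{R}(QP)}$ and the formula $P_{\overline{\mathcal{R}(QP)}}=Q-P_{\mathcal{R}(Q)\cap\mathcal{N}(P)}$. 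Applying the same Lemma~\ref{lem:derivation of the projection-1} with $P$ replaced by $I-P$ shows $\overline{\mathcal{R}(Q(I-P))}$ is orthogonally complemented with $P_{\overline{\mathcal{R}(Q(I-P))}}=Q-P_{\mathcal{R}(Q)\cap\mathcal{R}(P)}$. I then claim $\overline{\mathcal{R}(QP(I-Q))}$ is the orthogonal complement of $\mathcal{R}(P)\cap\mathcal{R}(Q)$ inside $\overline{\mathcal{R}(QP)}$. The inclusion $\mathcal{R}(P)\cap\mathcal{R}(Q)\subseteq\overline{\mathcal{R}(QP)}$ is clear since any such vector is fixed by $Q$ and lies in $\mathcal{R}(P)$, hence equals $QP$ of itself; and orthogonality $\overline{\mathcal{R}(QP(I-Q))}\perp\mathcal{R}(P)\cap\mathcal{R}(Q)$ follows from the computation $\langle QP(I-Q)s,\,t\rangle=\langle (I-Q)s,\,PQt\rangle=\langle (I-Q)s,\,t\rangle=\langle s,\,(I-Q)t\rangle=0$ for $t\in\mathcal{R}(P)\cap\mathcal{R}(Q)$, combined with continuity of the inner product.

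The remaining — and I expect principal — difficulty is the spanning statement: every element of $\overline{\mathcal{R}(QP)}$ decomposes as something in $\overline{\mathcal{R}(QP(I-Q))}$ plus something in $\mathcal{R}(P)\cap\mathcal{R}(Q)$. The natural approach is to write, for $x\in H$,
\[
QPx=QP(I-Q)x+QPQx,
\]
so it suffices to handle $QPQx$; here I would use the approximating operators of Theorem~\ref{The: equivalent condition of orthogonal }, namely $B_n=PT_nQ$ with $T_n=(P+Q+\tfrac1n I)^{-1}$, for which $\lim_n 2B_n=P_{\mathcal{R}(P)\cap\mathcal{R}(Q)}$ strictly. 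One writes $QPQx=Q\big(PQx-2PT_nPQx\big)+2QPT_nPQx$ — or more cleanly, exploit that $P T_n P Q \to \tfrac12 P_{\mathcal R(P)\cap\mathcal R(Q)}$ and manipulate $QPQ$ against $Q(P+Q)T_n$, which tends to $Q$ on $\overline{\mathcal{R}(P+Q)}\supseteq\mathcal{R}(Q)$ — to split $QPQx$ into a piece converging into $\mathcal{R}(P)\cap\mathcal{R}(Q)$ and a residual piece that, after the subtraction, lands in $\overline{\mathcal{R}(QP(I-Q))}$ because of the identity $(P+Q)T_nPQx-PQx\to 0$ rearranged so that the $Q$-component is separated off. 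Closedness of $\overline{\mathcal{R}(QP(I-Q))}$ together with the already-established orthogonal complementarity of $\mathcal{R}(P)\cap\mathcal{R}(Q)$ then lets one pass to the limit and conclude $\mathcal{R}(QP)\subseteq\overline{\mathcal{R}(QP(I-Q))}+\mathcal{R}(P)\cap\mathcal{R}(Q)$, and taking closures finishes \eqref{equorthogonal decomposition of the range of QP}. An alternative, possibly cleaner, route avoiding the $T_n$'s entirely: subtract the two projection formulas from Lemma~\ref{lem:derivation of the projection-1},
\[
P_{\overline{\mathcal{R}(QP)}}-P_{\overline{\mathcal{R}(Q(I-P))}}=P_{\mathcal{R}(Q)\cap\mathcal{R}(P)}-P_{\mathcal{R}(Q)\cap\mathcal{N}(P)},
\]
and analyze ranges directly; I would try this first, as it sidesteps the strict-topology estimates, but I anticipate still needing a short argument to identify $\overline{\mathcal{R}(QP(I-Q))}$ with $\overline{\mathcal{R}(Q(I-P))}\cap\overline{\mathcal{R}(QP)}$ or with the appropriate difference of ranges.
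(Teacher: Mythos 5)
Your preparatory observations are sound in substance but mis-cited, and the core of the argument has a genuine gap. For the first assertion you only need Remark~\ref{rem:double orthogonality}: since $\overline{\mathcal{R}(2I-P-Q)}^\perp=\mathcal{N}(I-P)\cap\mathcal{N}(I-Q)=\mathcal{R}(P)\cap\mathcal{R}(Q)$, the hypothesis gives $H=\overline{\mathcal{R}(2I-P-Q)}\dotplus\big(\mathcal{R}(P)\cap\mathcal{R}(Q)\big)$; the equivalence (iii)$\Leftrightarrow$(iv) of Theorem~\ref{The: equivalent condition of orthogonal } cannot be invoked literally, because its standing hypothesis is that $\overline{\mathcal{R}(P+Q)}$ is orthogonally complemented, which is not assumed here. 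The same issue is fatal for your main step. Applying Lemma~\ref{lem:derivation of the projection-1} to the pair $(P,Q)$ itself requires $\overline{\mathcal{R}(I-Q+P)}$ to be orthogonally complemented, and your claim that this follows from the orthogonal complementarity of $\overline{\mathcal{R}(2I-P-Q)}$ is false in general: the orthogonal complementarity of the four closures in \eqref{equ:four closures} are independent conditions (a variant of the Section~\ref{sec:discomplementable projections} example, with the angle kept bounded away from $0$ but reaching $\pi/2$ at an endpoint, makes $2I-P-Q$ invertible while $\overline{\mathcal{R}(P+I-Q)}$ is not orthogonally complemented), and your suggested comparison via $\overline{\mathcal{R}(I-Q+P)}\subseteq\overline{\mathcal{R}(I-Q)+\mathcal{R}(P)}$ is vacuous since Lemma~\ref{lem:R(P+Q)=R(P)+R(Q)} says these two sets coincide. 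Consequently the formula $P_{\overline{\mathcal{R}(QP)}}=Q-P_{\mathcal{R}(Q)\cap\mathcal{N}(P)}$ (indeed even the symbol $P_{\mathcal{R}(Q)\cap\mathcal{N}(P)}$) is not available, so your ``cleaner'' route of subtracting the two projection formulas collapses; and the splitting of $QPQx$ via $T_n=(P+Q+\tfrac1n I)^{-1}$ and $B_n=PT_nQ$ relies on Lemma~\ref{lem:the strict topology of the projection of the range-1} and the limit $2B_n\to P_{\mathcal{R}(P)\cap\mathcal{R}(Q)}$, which again presuppose that $\overline{\mathcal{R}(P+Q)}$ is orthogonally complemented, besides being left as an unfinished sketch.

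The missed idea is to apply Lemma~\ref{lem:derivation of the projection-1} with $(Q,P)$ replaced by $(P,I-Q)$: then its hypothesis reads exactly ``$\overline{\mathcal{R}(2I-P-Q)}$ is orthogonally complemented'', and its conclusion is $\mathcal{R}(P)=\overline{\mathcal{R}\big(P(I-Q)\big)}\dotplus\big(\mathcal{R}(P)\cap\mathcal{R}(Q)\big)$. Applying $Q$ to this decomposition gives $\mathcal{R}(QP)\subseteq\overline{\mathcal{R}\big(QP(I-Q)\big)}+\mathcal{R}(P)\cap\mathcal{R}(Q)$; the right-hand side is closed because the two summands are mutually orthogonal closed submodules (your orthogonality computation is exactly the right one), and it is contained in $\overline{\mathcal{R}(QP)}$ since $QPx=x$ for $x\in\mathcal{R}(P)\cap\mathcal{R}(Q)$. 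This yields \eqref{equorthogonal decomposition of the range of QP} directly, with no need to know that $\overline{\mathcal{R}(QP)}$, $\overline{\mathcal{R}(I-Q+P)}$ or $\overline{\mathcal{R}(P+Q)}$ is orthogonally complemented — note that the lemma's statement never asserts the orthogonal complementarity of $\overline{\mathcal{R}(QP)}$ in $H$, so that part of your effort is not needed.
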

\begin{proof} Since $\overline{\mathcal{R}(2I-P-Q)}$ is orthogonally complemented in $H$, the notation $P_{\mathcal{R}(P)\cap \mathcal{R}(Q)}$ is meaningful, and $H$ can be decomposed orthogonally as \eqref{equ:new decomposition of H}.
Replacing $Q$ and $P$ in Lemma~\ref{lem:derivation of the projection-1} with $P$ and $I-Q$ respectively, by \eqref{equ:formula for the projection on QP} we obtain
\begin{eqnarray*}\mathcal{R}(P)=\overline{\mathcal{R}\big(P(I-Q)\big)}\dotplus \mathcal{R}(P)\cap \mathcal{R}(Q),
\end{eqnarray*}
which clearly gives
\begin{eqnarray*}\mathcal{R}(QP)&\subseteq& Q\overline{\mathcal{R}\big(P(I-Q)\big)}+ \mathcal{R}(P)\cap \mathcal{R}(Q)\\
&\subseteq& \overline{\mathcal{R}\big(QP(I-Q)\big)}+\mathcal{R}(P)\cap \mathcal{R}(Q).
\end{eqnarray*}
Note that $\overline{\mathcal{R}\big(QP(I-Q)\big)}$ and $\mathcal{R}(P)\cap \mathcal{R}(Q)$ are orthogonal to each other, so their sum is closed. Hence
\begin{eqnarray*}\overline{\mathcal{R}(QP)}\subseteq \overline{\mathcal{R}\big(QP(I-Q)\big)}+\mathcal{R}(P)\cap \mathcal{R}(Q)\subseteq \overline{\mathcal{R}(QP)},\end{eqnarray*}
since $QPx=x$ for every $x\in\mathcal{R}(P)\cap \mathcal{R}(Q)$.
The proof of \eqref{equorthogonal decomposition of the range of QP} is then finished.
\end{proof}

A direct application of Lemmas~\ref{lem:derivation of the projection-1} and \ref{lem:derivation of the projection-2} is as follows.

\begin{cor}\label{cor: a projection derived two orthogonalities} Let $P,Q\in\mathcal{L}(H)$ be two projections. If both $\overline{\mathcal{R}(I-Q+P)}$ and $\overline{\mathcal{R}(2I-P-Q)}$ are orthogonally complemented in $H$, then
$\overline{\mathcal{R}\big(QP(I-Q)\big)}$ is also orthogonally complemented in $H$ and
\begin{equation}\label{make projection for}P_{\overline{\mathcal{R}\big(QP(I-Q)\big)}}=Q-P_{\mathcal{R}(Q)\cap \mathcal{N}(P)}-P_{\mathcal{R}(Q)\cap \mathcal{R}(P)}.
\end{equation}
\end{cor}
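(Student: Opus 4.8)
The plan is to feed the two hypotheses in turn into Lemmas~\ref{lem:derivation of the projection-1} and \ref{lem:derivation of the projection-2}, and then to realise $P_{\overline{\mathcal{R}(QP(I-Q))}}$ as the difference of two projections that those lemmas already produce. First I would invoke Lemma~\ref{lem:derivation of the projection-1} with the present $P,Q$: the orthogonal complementarity of $\overline{\mathcal{R}(I-Q+P)}$ gives that $\overline{\mathcal{R}(QP)}$ is orthogonally complemented in $H$, with $P_{\overline{\mathcal{R}(QP)}}=Q-P_{\mathcal{R}(Q)\cap\mathcal{N}(P)}$. Next I would invoke Lemma~\ref{lem:derivation of the projection-2}: the orthogonal complementarity of $\overline{\mathcal{R}(2I-P-Q)}$ gives that $\mathcal{R}(P)\cap\mathcal{R}(Q)$ is orthogonally complemented and that $\overline{\mathcal{R}(QP)}=\overline{\mathcal{R}(QP(I-Q))}\dotplus\mathcal{R}(P)\cap\mathcal{R}(Q)$ is an orthogonal decomposition. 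At this point both $P_{\overline{\mathcal{R}(QP)}}$ and $P_{\mathcal{R}(P)\cap\mathcal{R}(Q)}$ exist in $\mathcal{L}(H)$, and $\mathcal{R}(P)\cap\mathcal{R}(Q)\subseteq\overline{\mathcal{R}(QP)}$ because $QPx=x$ for every $x$ in it.

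The key step is then a small general fact about Hilbert $C^*$-modules: if $M\subseteq N$ are orthogonally complemented closed submodules of $H$, then $E:=P_N-P_M$ is again a projection (one checks $P_NP_M=P_MP_N=P_M$, whence $E^2=E=E^*$), and if in addition $N=L\dotplus M$ with $L\perp M$, then $\mathcal{R}(E)=L$. I would prove $L\subseteq\mathcal{R}(E)$ by noting that $Ex=x$ for $x\in L$ (since $P_Mx=0$ and $P_Nx=x$), and the reverse inclusion by decomposing $P_Nx=\ell+m$ with $\ell\in L$, $m\in M$, applying $P_M$, using $P_MP_N=P_M$ together with $P_M\ell=0$ and $P_Mm=m$ to get $m=P_Mx$, and reading off $Ex=P_Nx-P_Mx=\ell\in L$. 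By Remark~\ref{rem:double orthogonality} the range of the projection $E$ is orthogonally complemented in $H$. Applying this with $N=\overline{\mathcal{R}(QP)}$, $M=\mathcal{R}(P)\cap\mathcal{R}(Q)$, $L=\overline{\mathcal{R}(QP(I-Q))}$ shows that $\overline{\mathcal{R}(QP(I-Q))}$ is orthogonally complemented in $H$ with $P_{\overline{\mathcal{R}(QP(I-Q))}}=P_{\overline{\mathcal{R}(QP)}}-P_{\mathcal{R}(P)\cap\mathcal{R}(Q)}$; substituting the formula from Lemma~\ref{lem:derivation of the projection-1} and using $\mathcal{R}(P)\cap\mathcal{R}(Q)=\mathcal{R}(Q)\cap\mathcal{R}(P)$ yields \eqref{make projection for}.

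The part that needs the most care is this last identification: I must verify not only that $L$ is the orthogonal complement of $M$ \emph{inside} $N$, but that the ambient operator $P_N-P_M$ has range exactly $L$, so that orthogonal complementarity transfers to all of $H$ rather than merely to $N$; this is precisely what the two-inclusion argument above secures. Everything else is a straightforward concatenation of the two preceding lemmas, so I do not anticipate any genuine obstacle.
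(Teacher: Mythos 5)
Your argument is correct and is essentially the proof the paper intends: the paper presents this corollary as a direct application of Lemmas~\ref{lem:derivation of the projection-1} and \ref{lem:derivation of the projection-2}, and your concatenation of the two, together with the (correctly verified) fact that for nested orthogonally complemented submodules $M\subseteq N$ with $N=L\dotplus M$, $L\perp M$, the operator $P_N-P_M$ is a projection with range $L$, is exactly the omitted routine step. No gaps.
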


\begin{defn}\label{defn:harmonious projections} Two projections $P,Q\in\mathcal{L}(H)$ are said to be harmonious if the four closures in \eqref{equ:four closures} are all orthogonally complemented in $H$.
\end{defn}

Suppose that $P,Q\in\mathcal{L}(H)$ are two harmonious projections. Let
\begin{eqnarray}\label{eqn:defn of H1 and H2}&&H_1=\mathcal{R}(P)\cap\mathcal{R}(Q), \ H_2=\mathcal{R}(P)\cap\mathcal{N}(Q),\\
 \label{eqn:defn of H3 and H4}&&H_3=\mathcal{N}(P)\cap\mathcal{R}(Q), \ H_4=\mathcal{N}(P)\cap\mathcal{N}(Q).
\end{eqnarray}
Since $\overline{\mathcal{R}(P+Q)}^\bot=H_4$ and $\overline{\mathcal{R}(P+Q)}$ is orthogonally complemented in $H$,
we conclude from \eqref{equ:double orthogonality} that $H_4$ is also orthogonally complemented in $H$.
Similarly, $H_1,H_2$ and $H_3$ are all orthogonally complemented in $H$. Let
\begin{equation}\label{equ:defn of P1--P4}P_{H_i}\ \mbox{be denoted simply by $P_i$ for $i=1,2,3,4$},\end{equation}
and put
\begin{align}\label{eqn:defn of P5}&P_5=P-P_1-P_2\ \mbox{and}\ H_5=\mathcal{R}(P_5),\\
\label{eqn:defn of P6}&P_6=(I-P)-P_3-P_4\ \mbox{and}\ H_6=\mathcal{R}(P_6).
\end{align}
With the notations given above, a unitary operator $U_{P,Q}: H\to \oplus_{i=1}^6 H_i$ is given by
\begin{equation}\label{equ:defn of V}U_{P,Q}(x)=\Big(P_1(x),P_2(x),\cdots,P_6(x)\Big)^T\ \mbox{for every}\ x\in H,\end{equation}
with the property that
$$U_{P,Q}^*\Big((x_1,x_2,\cdots,x_6)^T\Big)=\sum_{i=1}^6 x_i,\ \mbox{for every $x_i\in H_i$}, \quad i=1,2,\cdots,6.$$ It follows that
\begin{eqnarray}\label{equ:decomposition of P}U_{P,Q}\,P\,U_{P,Q}^*&=&I_{H_1}\oplus I_{H_2}\oplus 0\oplus 0\oplus I_{H_5}\oplus 0,\\
\label{equ:decomposition of Q}U_{P,Q}\,Q\,U_{P,Q}^*&=&I_{H_1}\oplus 0\oplus I_{H_3}\oplus 0\oplus T,
\end{eqnarray}
where
\begin{equation}\label{equ:2 by 2 blocked operator matrix for Q-1}
T=\left(\begin{array}{cc}P_5QP_5|_{H_5}&P_5QP_6|_{H_6}\\(P_5QP_6|_{H_6})^*&P_6QP_6|_{H_6}\end{array}\right)\in\mathcal{L}(H_5\oplus H_6),
\end{equation}
in which $P_5QP_5|_{H_5}$ is the restriction of the operator $P_5QP_5$ on $H_5$. The same convention is taken for
$P_5QP_6|_{H_6}$ and $P_6QP_6|_{H_6}$.

\begin{lem}\label{lem:prepare for the polar decomposition-1} Suppose that $P,Q\in\mathcal{L}(H)$ are two harmonious projections. Let $H_i, P_i$ $(1\le i\le 6$) be defined by \eqref{eqn:defn of H1 and H2}--\eqref{eqn:defn of P6}, respectively. Then
\begin{align}\label{eqn:prepare for the polar decomposition-1}&\overline{\mathcal{R}(P_5Q)}=H_5, \quad \overline{\mathcal{R}(QP_5)}= \overline{\mathcal{R}\big(QP(I-Q)\big)}, \\
\label{eqn:prepare for the polar decomposition-2}&\overline{\mathcal{R}(P_6Q)}=H_6, \quad \overline{\mathcal{R}(QP_6)}= \overline{\mathcal{R}\big(QP(I-Q)\big)}.
\end{align}
\end{lem}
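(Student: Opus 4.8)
The plan is to reduce the four range identities to a short list of operator relations among $P,Q,P_1,\dots,P_6$, then to invoke Lemma~\ref{lem:derivation of the projection-1} and Corollary~\ref{cor: a projection derived two orthogonalities} to pin down three closed submodules, and finally to transport these through the continuous maps $P_5,P_6,Q$. First I would record the bookkeeping identities coming directly from the construction preceding the lemma: from $H_1,H_3\subseteq\mathcal{R}(Q)$, $H_2,H_4\subseteq\mathcal{N}(Q)$, $H_5\subseteq\mathcal{R}(P)$, $H_6\subseteq\mathcal{N}(P)$ and the pairwise orthogonality of the ranges of $P_1,\dots,P_6$, one gets $P_iQ=QP_i=P_i$ for $i\in\{1,3\}$, $P_iQ=QP_i=0$ for $i\in\{2,4\}$, $P_5P=PP_5=P_5$, $P_6(I-P)=(I-P)P_6=P_6$, and $P_1P_5=P_3P_5=P_1P_6=P_3P_6=0$. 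Substituting $P=P_1+P_2+P_5$ and $I-P=P_3+P_4+P_6$ then yields
\[ QP=P_1+QP_5,\qquad QP(I-Q)=QP_5-QP_5Q=QP_5(I-Q), \]
\[ QP_6=(Q-P_1-P_3)-QP_5,\qquad QP_6(I-Q)=-QP_5(I-Q)=-QP(I-Q). \]

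Next I would identify three closed, orthogonally complemented submodules. Applying Lemma~\ref{lem:derivation of the projection-1} with $(Q,P)$ in place of its $(P,Q)$ (legitimate since $\overline{\mathcal{R}(I-P+Q)}$ is orthogonally complemented) gives $P_{\overline{\mathcal{R}(PQ)}}=P-P_{\mathcal{R}(P)\cap\mathcal{N}(Q)}=P-P_2=P_1+P_5$, so $\overline{\mathcal{R}(PQ)}=H_1\dotplus H_5$; applying it with $(I-P,Q)$ in place of $(P,Q)$ (using that $\overline{\mathcal{R}(P+Q)}$ is orthogonally complemented) gives $\overline{\mathcal{R}((I-P)Q)}=\mathcal{R}\big((I-P)-P_4\big)=H_3\dotplus H_6$. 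Finally, Corollary~\ref{cor: a projection derived two orthogonalities} (its two hypotheses being exactly that $\overline{\mathcal{R}(P+I-Q)}$ and $\overline{\mathcal{R}(2I-P-Q)}$ are orthogonally complemented) shows that $\overline{\mathcal{R}(QP(I-Q))}$ is orthogonally complemented with associated projection $Q-P_3-P_1$; since $P_1+P_3$ is a sub-projection of $Q$ this is a genuine projection, and I will write $H_7:=\overline{\mathcal{R}(QP(I-Q))}=\mathcal{R}(Q-P_1-P_3)=\{x\in\mathcal{R}(Q):P_1x=P_3x=0\}$.

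Now the assertions about $P_5Q$ and $P_6Q$ are immediate: since $P_5Q=P_5(PQ)$ and $P_6Q=P_6\big((I-P)Q\big)$, continuity of $P_5$ and $P_6$ gives $\overline{\mathcal{R}(P_5Q)}=\overline{P_5\big(\overline{\mathcal{R}(PQ)}\big)}=\overline{P_5(H_1\dotplus H_5)}=H_5$, and likewise $\overline{\mathcal{R}(P_6Q)}=\overline{P_6(H_3\dotplus H_6)}=H_6$, because $P_5$ annihilates $H_1$ and is the identity on $H_5$, and symmetrically for $P_6$. For the assertions about $QP_5$ and $QP_6$, the identities $QP(I-Q)=QP_5(I-Q)=-QP_6(I-Q)$ yield $\mathcal{R}(QP(I-Q))\subseteq\mathcal{R}(QP_5)$ and $\mathcal{R}(QP(I-Q))\subseteq\mathcal{R}(QP_6)$, hence $H_7\subseteq\overline{\mathcal{R}(QP_5)}$ and $H_7\subseteq\overline{\mathcal{R}(QP_6)}$; for the reverse inclusions one checks $\mathcal{R}(QP_5)=Q(H_5)\subseteq H_7$ and $\mathcal{R}(QP_6)=Q(H_6)\subseteq H_7$, since for $h\in H_5$ or $h\in H_6$ one has $Qh\in\mathcal{R}(Q)$ and $P_1(Qh)=P_1h=0$, $P_3(Qh)=P_3h=0$ by the identities of the first step. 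As $H_7$ is closed this gives $\overline{\mathcal{R}(QP_5)}=\overline{\mathcal{R}(QP_6)}=H_7=\overline{\mathcal{R}(QP(I-Q))}$, completing the argument.

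The place where genuine care is needed is the last step. In a Hilbert $C^*$-module a closed submodule of $H_7$ that has trivial orthogonal complement in $H_7$ need not be all of $H_7$, so the reverse inclusions above cannot be obtained by a density/annihilator argument; it is essential both to produce them from the explicit factorizations $QP(I-Q)=QP_5(I-Q)$ and $QP_6(I-Q)=-QP(I-Q)$, and to know from Corollary~\ref{cor: a projection derived two orthogonalities} that $\overline{\mathcal{R}(QP(I-Q))}$ really equals the concrete submodule $\mathcal{R}(Q-P_1-P_3)$. Everything else is the routine verification of the identities of the first step and careful tracking of which projections are substituted into Lemma~\ref{lem:derivation of the projection-1}.
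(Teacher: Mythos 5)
Your proposal is correct and takes essentially the same approach as the paper: both arguments identify the relevant closures as ranges of explicit projections built from $P_1,\dots,P_6$ via Lemma~\ref{lem:derivation of the projection-1} and Corollary~\ref{cor: a projection derived two orthogonalities}, and then finish with the same operator identities ($QP_5=QP(I-P_1)$, $QP(I-Q)=QP_5(I-Q)=-QP_6(I-Q)$, $P_1P_5=P_3P_5=0$, etc.). The only cosmetic differences are that you apply Lemma~\ref{lem:derivation of the projection-1} to $PQ$ and $(I-P)Q$ and then compress by $P_5$, $P_6$, whereas the paper applies \eqref{make projection for} to $PQ(I-P)$ and obtains the $P_6$ statements by the substitution $(P,Q)\mapsto(I-P,Q)$, and you verify $\mathcal{R}(QP_5),\mathcal{R}(QP_6)\subseteq\overline{\mathcal{R}\big(QP(I-Q)\big)}$ elementwise instead of writing $QP_5=P_{\overline{\mathcal{R}(QP(I-Q))}}P_5$.
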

\begin{proof}Exchanging $P$ with $Q$, we observe from \eqref{make projection for} and \eqref{eqn:defn of P5} that
\begin{equation}\label{equ:middle characterization of projection-1}P_{\overline{\mathcal{R}\big(PQ(I-P)\big)}}=P-P_{\mathcal{R}(P)\cap \mathcal{N}(Q)}-P_{\mathcal{R}(P)\cap \mathcal{R}(Q)}=P-P_2-P_1=P_5.
\end{equation}
Moreover, we have $(I-P_1)(I-P)=I-P$ and
\begin{eqnarray*}P_5Q&=&(P-P_1-P_2)Q=PQ-P_1=PQ(I-P_1),
\end{eqnarray*}
which shows that
$$\mathcal{R}\big(PQ(I-P)\big)\subseteq \mathcal{R}\big(PQ(I-P_1)\big)=\mathcal{R}(P_5Q)\subseteq \mathcal{R}(P_5)=H_5.$$
The inclusions of the above sets together with \eqref{equ:middle characterization of projection-1} yield
$\overline{\mathcal{R}(P_5Q)}=H_5$.

Note that $I-Q=(I-P_1)(I-Q)$ and
\begin{eqnarray*}QP_5=Q(P-P_1-P_2)=QP-P_1=QP(I-P_1),
\end{eqnarray*}
so
$$\overline{\mathcal{R}\big(QP(I-Q)\big)}\subseteq \overline{\mathcal{R}\big(QP(I-P_1)\big)}=\overline{\mathcal{R}(QP_5)}.$$
Meanwhile, by \eqref{make projection for}, we have
$$QP_5=P_{\overline{\mathcal{R}\big(QP(I-Q)\big)}}P_5+P_3P_5+P_1P_5=P_{\overline{\mathcal{R}\big(QP(I-Q)\big)}}P_5,
$$
so $\overline{\mathcal{R}(QP_5)}\subseteq \overline{\mathcal{R}\big(QP(I-Q)\big)}$. Therefore $\overline{\mathcal{R}(QP_5)}= \overline{\mathcal{R}\big(QP(I-Q)\big)}$. This completes the proof of \eqref{eqn:prepare for the polar decomposition-1}.

Replacing the pair $(P,Q)$ with $(I-P,Q)$, we have
\begin{eqnarray*}\widetilde{H}_1=\mathcal{R}(I-P)\cap\mathcal{R}(Q)=H_3, \quad\widetilde{P}_1=P_{\widetilde{H}_1}=P_3.
\end{eqnarray*}
Similarly,
\begin{align*}\widetilde{H}_2=H_4,\quad \widetilde{H}_3=H_1,\quad \widetilde{H}_4=H_2,\quad \widetilde{H}_5=H_6\ \mbox{and}\ \widetilde{H}_6=H_5.
\end{align*}
Note that $Q(I-P)(I-Q)=-QP(I-Q)$, so from \eqref{eqn:prepare for the polar decomposition-1} we obtain
\begin{align*}\overline{\mathcal{R}(P_6Q)}&=\overline{\mathcal{R}(\widetilde{P}_5Q)}=\widetilde{H}_5=H_6,\\
\overline{\mathcal{R}(QP_6))}&=\overline{\mathcal{R}\big(Q\widetilde{P}_5)\big)}=\overline{\mathcal{R}\big(Q(I-P)(I-Q)\big)}=\overline{\mathcal{R}\big(QP(I-Q)\big)}. \qedhere
\end{align*}

\end{proof}

\begin{defn}\cite{Manuilov-Moslehian-Xu} An operator $T\in\mathcal{L}(H,K)$ is said to be semi-regular if $\overline{\mathcal{R}(T)}$ and $\overline{\mathcal{R}(T^*)}$ are orthogonally complemented in $K$ and $H$, respectively.
\end{defn}

\begin{lem}\label{lem:Wegge-Olsen}{\rm (\cite[Lemma~3.6]{Liu-Luo-Xu} and \cite[Proposition~15.3.7]{Wegge-Olsen})}\ Let $T\in\mathcal{L}(H,K)$ be semi-regular. Then there exists a partial isometry $U\in\mathcal{L}(H,K)$ such that
\begin{eqnarray*}\label{equ:equations associated the polar decomposition}T=U(T^*T)^{\frac{1}{2}}=(TT^*)^\frac12 U,\quad U^*U=P_{\overline{\mathcal{R}(T^*)}}, \quad\ UU^*=P_{\overline{\mathcal{R}(T)}}.
\end{eqnarray*}
\end{lem}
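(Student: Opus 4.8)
The final statement is Lemma~\ref{lem:Wegge-Olsen}, the polar decomposition for semi-regular operators on Hilbert $C^*$-modules, cited from \cite{Liu-Luo-Xu} and \cite{Wegge-Olsen}. I will sketch a self-contained proof.

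\medskip

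The plan is to build $U$ on the closure of the range of $|T| := (T^*T)^{1/2}$, which coincides with $\overline{\mathcal{R}(T^*)}$ by Lemma~\ref{lem:Range closure of TT and T}, extend it by zero on the orthogonal complement $\mathcal{N}(|T|) = \mathcal{N}(T)$, and check that the resulting map is adjointable with the asserted properties. First I would observe that $\||T|x\|^2 = \langle |T|x, |T|x\rangle = \langle T^*Tx, x\rangle = \langle Tx, Tx\rangle = \|Tx\|^2$ for every $x \in H$, so the assignment $|T|x \mapsto Tx$ is a well-defined $A$-linear isometry from $\mathcal{R}(|T|)$ onto $\mathcal{R}(T)$; it extends by continuity to an isometry $U_0 : \overline{\mathcal{R}(|T|)} \to \overline{\mathcal{R}(T)}$, and since $T$ is semi-regular both of these closures are orthogonally complemented, with $\overline{\mathcal{R}(|T|)} = \overline{\mathcal{R}(T^*)}$. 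Then I would define $U \in \mathcal{L}(H,K)$ on the orthogonal decomposition $H = \overline{\mathcal{R}(T^*)} \dotplus \mathcal{N}(T)$ by $U = U_0$ on the first summand and $U = 0$ on the second. The key check is adjointability: one verifies that $V \in \mathcal{L}(K,H)$ defined by $V = U_0^{-1}$ on $\overline{\mathcal{R}(T)}$ and $V = 0$ on $\mathcal{R}(T)^\perp$ satisfies $\langle Ux, y\rangle = \langle x, Vy\rangle$ for all $x \in H$, $y \in K$, so $U^* = V$; here both orthogonal complementation hypotheses are used crucially to make $V$ well-defined on all of $K$.

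\medskip

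Once $U$ and $U^* = V$ are in hand, the remaining identities are formal. From the construction, $U^*U$ acts as the identity on $\overline{\mathcal{R}(T^*)}$ and as zero on $\mathcal{N}(T) = \overline{\mathcal{R}(T^*)}^\perp$, hence $U^*U = P_{\overline{\mathcal{R}(T^*)}}$; symmetrically $UU^* = P_{\overline{\mathcal{R}(T)}}$, so $U$ is a partial isometry. For the factorization $T = U|T|$: both sides are $A$-linear and bounded, and they agree on $\mathcal{R}(|T|)$ by the very definition of $U_0$ (namely $U|T|x = U_0(|T|x) = Tx$), while on $\mathcal{N}(|T|) = \mathcal{N}(T)$ both sides vanish; since $H = \overline{\mathcal{R}(|T|)} \dotplus \mathcal{N}(|T|)$, they agree everywhere. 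Finally, $TT^* = U|T|\,|T|U^* = U(T^*T)U^*$ and one checks $(TT^*)^{1/2} = U|T|U^*$ by functional calculus (using $U^*U = P_{\overline{\mathcal{R}(T^*)}}$ to commute $U$ past the square root on the corner where $|T|$ is supported), whence $(TT^*)^{1/2}U = U|T|U^*U = U|T|P_{\overline{\mathcal{R}(T^*)}} = U|T| = T$.

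\medskip

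I expect the main obstacle to be the adjointability of $U$, i.e.\ producing the candidate adjoint $V$ and verifying the defining inner-product identity; this is precisely where the Hilbert $C^*$-module setting diverges from the Hilbert space case, since there is no projection theorem for arbitrary closed submodules and one genuinely needs the orthogonal-complementation hypotheses packaged into semi-regularity. A secondary technical point is the functional-calculus identity $(TT^*)^{1/2} = U(T^*T)^{1/2}U^*$, which requires a little care because $U$ is only a partial isometry; it is cleanest to verify it first for polynomials in $T^*T$ restricted to $\overline{\mathcal{R}(T^*)}$ and then pass to the square root by approximation, using that $U$ restricted to $\overline{\mathcal{R}(T^*)}$ is a unitary onto $\overline{\mathcal{R}(T)}$ which intertwines $T^*T|_{\overline{\mathcal{R}(T^*)}}$ with $TT^*|_{\overline{\mathcal{R}(T)}}$. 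Everything else is a routine density-and-continuity argument.
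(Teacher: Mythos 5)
Your argument is correct, and since the paper does not prove this lemma at all—it is imported from \cite[Lemma~3.6]{Liu-Luo-Xu} and \cite[Proposition~15.3.7]{Wegge-Olsen}—there is no internal proof to diverge from: your construction is exactly the standard one behind those references, with semi-regularity used precisely where it must be, namely to obtain the orthogonal decompositions $H=\overline{\mathcal{R}(T^*)}\dotplus\mathcal{N}(T)$ and $K=\overline{\mathcal{R}(T)}\dotplus\mathcal{N}(T^*)$ that make $U$ and its candidate adjoint well defined, and with the $A$-valued identity $\langle (T^*T)^{1/2}x,(T^*T)^{1/2}y\rangle=\langle Tx,Ty\rangle$ doing the work that the scalar Pythagorean identity does in the Hilbert space case. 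The only step worth streamlining is the last: writing $S=U(T^*T)^{1/2}U^*$, one has $S\ge 0$ and $S^2=U(T^*T)^{1/2}P_{\overline{\mathcal{R}(T^*)}}(T^*T)^{1/2}U^*=UT^*TU^*=TT^*$, so uniqueness of the positive square root gives $(TT^*)^{1/2}=S$ directly, with no polynomial approximation needed; also note that $\overline{\mathcal{R}\big((T^*T)^{1/2}\big)}=\overline{\mathcal{R}(T^*)}$ requires Lemma~\ref{lem:Range closure of TT and T} applied twice, once to $T$ and once to the self-adjoint operator $(T^*T)^{1/2}$.
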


Halmos' two projections theorem for Hilbert space operators has several
equivalent versions \cite{Bottcher-Spitkovsky}, one of which turns out to be \cite[Theorem~1.4]{Deng-Du}. It can be generalized for Hilbert $C^*$-module operators as follows.

\begin{thm}\label{thm:Halmos' two projections theorem}{\rm (cf.\,\cite[Theorem~1.4]{Deng-Du})} Suppose that $P,Q\in\mathcal{L}(H)$ are two harmonious projections. Let $H_i, P_i$ $(1\le i\le 6$) be defined by \eqref{eqn:defn of H1 and H2}--\eqref{eqn:defn of P6}, respectively. Then the operator $T$ given by \eqref{equ:2 by 2 blocked operator matrix for Q-1} can be characterized as
\begin{equation}\label{equ:2 by 2 blocked operator matrix for Q-2}T=\left(\begin{array}{cc}Q_0&Q_0^\frac{1}{2}(I_{H_5}-Q_0)^\frac{1}{2}U_0\\U_0^* Q_0^\frac{1}{2}(I_{H_5}-Q_0)^\frac{1}{2}&U_0^*(I_{H_5}-Q_0)U_0\end{array}\right)\in\mathcal{L}(H_5\oplus H_6),\end{equation}
where
 $U_0\in\mathcal{L}(H_6,H_5)$ is a unitary operator, $Q_0$ is the restriction of $P_5QP_5$ on $H_5$, and both $Q_0$ and $I_{H_5}-Q_0$ are positive, injective and contractive.
\end{thm}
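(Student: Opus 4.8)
The plan is to extract the Halmos form from the observation that, by \eqref{equ:decomposition of Q}, the operator $I_{H_1}\oplus 0\oplus I_{H_3}\oplus 0\oplus T$ is the conjugate of the projection $Q$ by the unitary $U_{P,Q}$, so that $T$ is itself a projection on $H_5\oplus H_6$. Writing $Q_0=P_5QP_5|_{H_5}$, $B=P_5QP_6|_{H_6}$ and $D=P_6QP_6|_{H_6}$, the identity $T=T^2=T^*$ then yields the block relations
\[BB^*=Q_0-Q_0^2=Q_0(I_{H_5}-Q_0),\qquad B^*B=D-D^2,\qquad B=Q_0B+BD .\]
On the analytic side, Lemma~\ref{lem:prepare for the polar decomposition-1} together with Corollary~\ref{cor: a projection derived two orthogonalities} shows that $QP_5$ and $QP_6$ are semi-regular and share the range closure $E:=\overline{\mathcal{R}\big(QP(I-Q)\big)}$. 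I would therefore apply the polar decomposition (Lemma~\ref{lem:Wegge-Olsen}) to each of $QP_5$ and $QP_6$, assemble a unitary $U_0\colon H_6\to H_5$ from the resulting partial isometries, and feed the block relations back in to recover \eqref{equ:2 by 2 blocked operator matrix for Q-2}.

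First I would record the elementary properties of $Q_0$. For $x\in H_5$ one has $P_5x=x$, so $\langle Q_0x,x\rangle=\langle Qx,x\rangle$ and hence $0\le Q_0\le I_{H_5}$; thus $Q_0$ and $I_{H_5}-Q_0$ are positive and contractive. If $Q_0x=0$ then $\langle Qx,x\rangle=0$, so $Qx=0$ and $x\in\mathcal{R}(P)\cap\mathcal{N}(Q)=H_2$; since $P_5P_2=0$ we have $H_2\perp H_5$, whence $x=0$. Likewise $(I_{H_5}-Q_0)x=0$ forces $x\in\mathcal{R}(P)\cap\mathcal{R}(Q)=H_1\perp H_5$, so $x=0$; therefore $Q_0$ and $I_{H_5}-Q_0$ are injective. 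Finally $P_5QP_5=(QP_5)^*(QP_5)$, so by Lemma~\ref{lem:Range closure of TT and T} and \eqref{eqn:prepare for the polar decomposition-1} we obtain $\overline{\mathcal{R}(Q_0)}=\overline{\mathcal{R}(P_5Q)}=H_5$; in particular $Q_0^{1/2}$ is injective with dense range in $H_5$. This density will stand in for the Pythagorean cancellations used in the Hilbert space proof.

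Since $\overline{\mathcal{R}((QP_5)^*)}=\overline{\mathcal{R}(P_5Q)}=H_5$, $\overline{\mathcal{R}((QP_6)^*)}=\overline{\mathcal{R}(P_6Q)}=H_6$, and $\overline{\mathcal{R}(QP_5)}=\overline{\mathcal{R}(QP_6)}=E$ is orthogonally complemented, Lemma~\ref{lem:Wegge-Olsen} provides partial isometries $U_5,U_6\in\mathcal{L}(H)$ with
\[QP_5=U_5(P_5QP_5)^{1/2},\qquad QP_6=U_6(P_6QP_6)^{1/2},\qquad U_5^*U_5=P_{H_5},\quad U_6^*U_6=P_{H_6},\quad U_5U_5^*=U_6U_6^*=P_E ,\]
where $(P_5QP_5)^{1/2}$ and $(P_6QP_6)^{1/2}$ are supported on $H_5$, $H_6$ and restrict there to $Q_0^{1/2}$, $D^{1/2}$. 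I would then set $U_0:=U_5^*U_6$, viewed as an operator $H_6\to H_5$; using $P_EU_5=U_5$ and $P_EU_6=U_6$ one checks $U_0^*U_0=U_6^*P_EU_6=I_{H_6}$ and $U_0U_0^*=U_5^*P_EU_5=I_{H_5}$, so $U_0$ is unitary. From $P_5QP_5=P_5U_5(P_5QP_5)^{1/2}$ and the density of $\mathcal{R}(Q_0^{1/2})$ in $H_5$ one gets $P_5U_5|_{H_5}=Q_0^{1/2}$, while $U_5U_0=P_EU_6=U_6$; hence
\[B=P_5QP_6=P_5U_6D^{1/2}=P_5U_5U_0D^{1/2}=Q_0^{1/2}U_0D^{1/2}.\]

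It remains to close the loop. Substituting $B=Q_0^{1/2}U_0D^{1/2}$ into $BB^*=Q_0(I_{H_5}-Q_0)$ gives $Q_0^{1/2}\big(U_0DU_0^*\big)Q_0^{1/2}=Q_0^{1/2}(I_{H_5}-Q_0)Q_0^{1/2}$, so the injectivity and dense range of $Q_0^{1/2}$ force $U_0DU_0^*=I_{H_5}-Q_0$, that is $D=U_0^*(I_{H_5}-Q_0)U_0$, and therefore $D^{1/2}=U_0^*(I_{H_5}-Q_0)^{1/2}U_0$ by the uniqueness of the positive square root and the unitarity of $U_0$. Plugging this back yields $B=Q_0^{1/2}U_0D^{1/2}=Q_0^{1/2}(I_{H_5}-Q_0)^{1/2}U_0$, and since $B^*=U_0^*Q_0^{1/2}(I_{H_5}-Q_0)^{1/2}$, the block matrix of $T$ is exactly \eqref{equ:2 by 2 blocked operator matrix for Q-2}; together with the properties established in the second paragraph this finishes the proof. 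The step I expect to be the main obstacle is the one that is automatic over a Hilbert space but not here: one must know that $QP_5$ and $QP_6$ are semi-regular \emph{and} have the same range closure, so that $U_0=U_5^*U_6$ is a genuine unitary rather than merely a partial isometry — and this is exactly where the harmonious hypothesis is used, through Lemma~\ref{lem:prepare for the polar decomposition-1} and Corollary~\ref{cor: a projection derived two orthogonalities}. Once those facts are granted, what remains is careful bookkeeping of the initial and final subspaces of $U_5$ and $U_6$, so that each composition and cancellation above (especially $P_5U_5|_{H_5}=Q_0^{1/2}$ and the cancellation of $Q_0^{1/2}$) is legitimate.
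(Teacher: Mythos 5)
Your proposal is correct and follows essentially the same route as the paper: both rest on Lemma~\ref{lem:prepare for the polar decomposition-1} and Corollary~\ref{cor: a projection derived two orthogonalities} to get semi-regularity and the common range closure $\overline{\mathcal{R}\big(QP(I-Q)\big)}$, then apply Lemma~\ref{lem:Wegge-Olsen}, assemble the unitary $U_0$ from the two partial isometries, and pin down the $(2,2)$ entry from $T^2=T$ via the injectivity (and dense range) of $Q_0^{1/2}$. The only cosmetic differences are that the paper polar-decomposes $P_5Q=(P_5QP_5)^{1/2}U$ and $QP_6=V(P_6QP_6)^{1/2}$ so that $P_5QP_6=(P_5Q)(QP_6)$ factors immediately without your intermediate identification $P_5U_5|_{H_5}=Q_0^{1/2}$, and it performs the final cancellation by applying injectivity of $Q_0^{1/2}$ twice (with an adjoint in between) rather than injectivity plus density.
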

\begin{proof}By Lemma~\ref{lem:prepare for the polar decomposition-1} and \eqref{make projection for},
 all $\overline{\mathcal{R}(P_5Q)}$, $\overline{\mathcal{R}(QP_5)}$, $\overline{\mathcal{R}(P_6Q)}$ and $\overline{\mathcal{R}(QP_6)}$ are orthogonally complemented in $H$. Therefore, by Lemma~\ref{lem:Wegge-Olsen}, there exist partial isometries $U, V\in\mathcal{L}(H)$ such that
\begin{align}\label{eqn:polar decompositions fro two operators}P_5Q&=(P_5QP_5)^{\frac{1}{2}} U, \ QP_6=V(P_6QP_6)^{\frac{1}{2}},\\
\label{eqn:U star is an isometry}UU^*&=P_{\overline{\mathcal{R}(P_5Q)}}=P_5=I_{H_5}\ \mbox{by \eqref{eqn:prepare for the polar decomposition-1}},\\
 \label{eqn:V is an isometry}V^*V&=P_{\overline{\mathcal{R}(P_6Q)}}=P_6=I_{H_6} \ \mbox{by \eqref{eqn:prepare for the polar decomposition-2}},\\
\label{eqn:half equality of U and V}U^*U&=P_{\overline{\mathcal{R}(QP_5)}}=P_{\overline{\mathcal{R}(QP_6)}}=VV^* \ \mbox{by \eqref{eqn:prepare for the polar decomposition-1} and \eqref{eqn:prepare for the polar decomposition-2}}.
\end{align}

Let $Q_0\in\mathcal{L}(H_5)$ and $Q_1\in\mathcal{L}(H_6)$ be defined respectively by
\begin{equation}\label{equ:defn of Q0 and Q1}Q_0=P_5QP_5|_{H_5}, \quad \ Q_1=P_6QP_6|_{H_6}.\end{equation}
Clearly, $Q_0$ is positive and contractive. Furthermore, we have
\begin{equation}\label{equ:injectivity of Q0 and I-Q0}\mathcal{N}(Q_0)=\mathcal{N}(I_{H_5}-Q_0)=\{0\}.\end{equation}
In fact, if $x=P_5x\in H_5$ is given such that $Q_0x=0$, then $Qx=QP_5x=0$, which means that $x\in H_5\cap \mathcal{N}(Q)=H_5\cap \mathcal{R}(P)\cap\mathcal{N}(Q)=H_5\cap H_2=\{0\}$.
Similarly, if $y\in H_5$ is such that $(I_{H_5}-Q_0)y=0$, then $P_5(I-Q)P_5y=0$, and hence $(I-Q)y=(I-Q)P_5y=0$, which gives
$y\in H_5\cap H_1=\{0\}$. This completes the proof of \eqref{equ:injectivity of Q0 and I-Q0}.

Let $U_0=UV|_{H_6}$ be the restriction of $UV$ on $H_6$. Then by \eqref{eqn:U star is an isometry} and \eqref{eqn:V is an isometry}, we see that $U_0\in\mathcal{L}(H_6,H_5)$ with $U_0^*=V^*U^*|_{H_5}$.
 We prove that $U_0$ is a unitary. Indeed, for every $x\in H_6$, by \eqref{eqn:V is an isometry} and \eqref{eqn:half equality of U and V} we have
$$U_0^* U_0x=V^*U^*UVx=(V^*V)^2x=x,$$
which means that $U_0^*U_0=I_{H_6}$. Similarly, for every $y\in H_5$, by \eqref{eqn:U star is an isometry} and \eqref{eqn:half equality of U and V} we have
 $$U_0U_0^*y=UVV^*U^*y=(UU^*)^2y=y,$$
whence $U_0U_0^*=I_{H_5}$. Thus $U_0\in\mathcal{L}(H_6,H_5)$ is a unitary.

 Next, we prove that
 \begin{equation}\label{equ:relationship between Q0 and Q1}Q_1=U_0^*(I_{H_5}-Q_0)U_0,\end{equation} where $Q_0$ and $Q_1$ are defined by \eqref{equ:defn of Q0 and Q1}.
In fact, from \eqref{eqn:polar decompositions fro two operators} and \eqref{eqn:U star is an isometry} we have
$$P_5QP_6|_{H_6}=Q_0^\frac12 UVQ_1^\frac12=Q_0^\frac12 U_0Q_1^\frac12,$$
therefore the operator $T$ defined by \eqref{equ:2 by 2 blocked operator matrix for Q-1} can be expressed alternately as
\begin{equation}\label{equ:an alternative expression for T}
T=\left(\begin{array}{cc}Q_0&Q_0^{\frac{1}{2}}U_0 Q_1^{\frac{1}{2}}\\Q_1^{\frac{1}{2}}U_0^* Q_0^{\frac{1}{2}}&Q_1\end{array}\right),
\end{equation}
which, in virtue of $T^2=T$, gives
\begin{eqnarray*}\label{equ:entry 1 1 is the same}Q_0^2+Q_0^{\frac{1}{2}}U_0Q_1U_0^* Q_0^{\frac{1}{2}}=Q_0.\end{eqnarray*}
The above equation together with \eqref{equ:injectivity of Q0 and I-Q0} yields
\begin{eqnarray*}Q_0^\frac32+U_0Q_1U_0^* Q_0^{\frac{1}{2}}=Q_0^\frac12.\end{eqnarray*}
Taking $*$-operation, we get
\begin{eqnarray*}Q_0^\frac32+Q_0^{\frac{1}{2}} U_0Q_1U_0^* =Q_0^\frac12.\end{eqnarray*}
Once again, by the injectivity of $Q_0^\frac12$, we can obtain $Q_0+U_0Q_1U_0^* =I_{H_5}$, which clearly leads to
\eqref{equ:relationship between Q0 and Q1}. Since $U_0$ is a unitary, from
\eqref{equ:relationship between Q0 and Q1} we can obtain
\begin{equation}\label{equ:relationship between Q0 and Q1-2}Q_1^\frac12=U_0^*(I_{H_5}-Q_0)^\frac12U_0.\end{equation}
Formula \eqref{equ:2 by 2 blocked operator matrix for Q-2} for $T$ then follows from \eqref{equ:relationship between Q0 and Q1}--\eqref{equ:relationship between Q0 and Q1-2}.
\end{proof}

\section{The norm equation concerning the characterization of the Friedrichs angle}\label{sec:the Friedrichs angle}

In this section, we focus on the study of the validity of equation~\eqref{equ:motivation equation}. First, we give a partial positive answer as follows.

\begin{lem}\label{lem:application of Halmos two projections-1} Equation \eqref{equ:motivation equation} is true for every two harmonious projections $P$ and $Q$.
\end{lem}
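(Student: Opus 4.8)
The plan is to diagonalize both $PQ(I-P_{H_1})$ and $(I-P)(I-Q)(I-P_{H_4})$ simultaneously via the unitary $U_{P,Q}$ of \eqref{equ:defn of V}, compute the resulting block operators explicitly from Theorem~\ref{thm:Halmos' two projections theorem}, and observe that the two blocks are unitarily equivalent, so they have equal norm. First I would use the orthogonal decomposition $H=\oplus_{i=1}^6 H_i$ and the formulas \eqref{equ:decomposition of P}, \eqref{equ:decomposition of Q}: since $P_{H_1}$ acts as $I_{H_1}$ on the first summand and $0$ elsewhere, $U_{P,Q}(I-P_{H_1})U_{P,Q}^*=0\oplus I_{H_2}\oplus\cdots\oplus I_{H_6}$. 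Conjugating $PQ(I-P_{H_1})$ by $U_{P,Q}$ and multiplying out, the contributions of $H_1,H_2,H_3,H_4$ collapse (either killed by $P$, by $Q$, or by $I-P_{H_1}$), and what survives lives on $H_5\oplus H_6$, governed entirely by the operator $T$ of \eqref{equ:2 by 2 blocked operator matrix for Q-1}. Explicitly, $U_{P,Q}PU_{P,Q}^*$ restricted to $H_5\oplus H_6$ is $I_{H_5}\oplus 0$, so $U_{P,Q}PQ(I-P_{H_1})U_{P,Q}^*$ is carried by the upper block row of $T$; using \eqref{equ:2 by 2 blocked operator matrix for Q-2} this block row is $\big(Q_0,\ Q_0^{1/2}(I_{H_5}-Q_0)^{1/2}U_0\big)$ acting into $H_5$.

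Next I would perform the symmetric computation for $(I-P)(I-Q)(I-P_{H_4})$. Here $I-P$ acts as $I$ on $H_3\oplus H_4\oplus H_6$ and as $0$ elsewhere, while $I-Q$ on $H_5\oplus H_6$ equals $I_{H_5\oplus H_6}-T$. Since $I-P_{H_4}$ kills the $H_4$ summand, the surviving part again lives on $H_5\oplus H_6$ and is carried by the lower block row of $I_{H_5\oplus H_6}-T$, namely $\big(-U_0^*Q_0^{1/2}(I_{H_5}-Q_0)^{1/2},\ I_{H_6}-U_0^*(I_{H_5}-Q_0)U_0\big)=\big(-U_0^*Q_0^{1/2}(I_{H_5}-Q_0)^{1/2},\ U_0^*Q_0U_0\big)$, where I use \eqref{equ:relationship between Q0 and Q1} in the form $I_{H_6}-U_0^*(I_{H_5}-Q_0)U_0=U_0^*Q_0U_0$. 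The norm of $PQ(I-P_{H_1})$ equals the norm of the upper block row $R_1:=\big(Q_0,\ Q_0^{1/2}(I_{H_5}-Q_0)^{1/2}U_0\big):H_5\oplus H_6\to H_5$, and the norm of $(I-P)(I-Q)(I-P_{H_4})$ equals the norm of $R_2:=\big(-U_0^*Q_0^{1/2}(I_{H_5}-Q_0)^{1/2},\ U_0^*Q_0U_0\big):H_5\oplus H_6\to H_6$.

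Finally I would match the two norms directly. Using $U_0^*U_0=I_{H_6}$, $U_0U_0^*=I_{H_5}$, and that $Q_0$ commutes with $Q_0^{1/2}$ and $(I_{H_5}-Q_0)^{1/2}$, one checks $R_2=U_0^*R_1\,(U_0\oplus U_0^*)$ up to the harmless sign on one entry; concretely $R_1R_1^*=Q_0^2+Q_0^{1/2}(I_{H_5}-Q_0)U_0U_0^*(I_{H_5}-Q_0)^{1/2}Q_0^{1/2}=Q_0^2+Q_0(I_{H_5}-Q_0)=Q_0$, and likewise $R_2R_2^*=U_0^*Q_0^{1/2}(I_{H_5}-Q_0)^{1/2}(I_{H_5}-Q_0)^{1/2}Q_0^{1/2}U_0+U_0^*Q_0^2U_0=U_0^*Q_0U_0$. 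Hence $\|R_1\|^2=\|R_1R_1^*\|=\|Q_0\|$ and $\|R_2\|^2=\|R_2R_2^*\|=\|U_0^*Q_0U_0\|=\|Q_0\|$, so $\|R_1\|=\|R_2\|$ and \eqref{equ:motivation equation} follows. The only genuinely delicate point is the bookkeeping in the first two paragraphs — verifying that the contributions from $H_1,\dots,H_4$ really do vanish and that $P_{H_1}$, $P_{H_4}$ interact with the block structure exactly as claimed; once the problem is reduced to the $2\times 2$ block on $H_5\oplus H_6$, the computation with $Q_0$ and the unitary $U_0$ is routine and mirrors the Hilbert space argument. Note that harmoniousness is exactly what makes all the projections $P_{H_i}$, hence $U_{P,Q}$, and Theorem~\ref{thm:Halmos' two projections theorem} available, and it also guarantees $P_{\mathcal R(P)\cap\mathcal R(Q)}=P_{H_1}$ and $P_{\mathcal N(P)\cap\mathcal N(Q)}=P_{H_4}$ are meaningful in \eqref{equ:motivation equation}.
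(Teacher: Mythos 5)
Your proof is correct, but it takes a genuinely different (and more direct) route than the paper. The paper first rewrites \eqref{equ:motivation equation} as \eqref{equ:norm equivalent for the angle-1}, i.e.\ with $I-P_{H_1}=P_{\overline{\mathcal{R}(2I-P-Q)}}$ and $I-P_{H_4}=P_{\overline{\mathcal{R}(P+Q)}}$, and then approximates these projections in the strict topology by $(2I-P-Q)\big[2I-P-Q+\tfrac{1}{n}I\big]^{-1}$ and $(P+Q)\big[P+Q+\tfrac{1}{n}I\big]^{-1}$ (Lemma~\ref{lem:the strict topology of the projection of the range-1}); after conjugating by $U_{P,Q}$ it produces an explicit unitary $\widetilde{U}$ with $\widetilde{U}A_n\widetilde{U}^*=B_n$ for every $n$ and passes to the limit. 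You instead multiply by $I-P_{H_1}$ and $I-P_{H_4}$ directly (legitimate, since harmoniousness makes $P_{H_1}$ and $P_{H_4}$ available and these are exactly the projections appearing in \eqref{equ:motivation equation}), reduce to the $H_5\oplus H_6$ corner exactly as the paper does --- your $R_1$ and $R_2$ are the paper's blocks $M_1$ and $N_1$ --- and finish with the $C^*$-identity: $R_1R_1^*=Q_0$ and $R_2R_2^*=U_0^*Q_0U_0$, so both norms equal $\|Q_0\|^{1/2}$. This avoids the strict-limit machinery entirely and is shorter; what the paper's argument buys in exchange is the stronger conclusion that the two products are unitarily equivalent (via $\widetilde{U}$), and a formulation in terms of the approximants that meshes with how the result is pushed beyond harmonious pairs in Lemma~\ref{lem:the main result+2}. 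One cosmetic point: your claimed identity $R_2=U_0^*R_1(U_0\oplus U_0^*)$ also involves interchanging the two summands of the domain, not only a sign, but since you never use it (the $R_iR_i^*$ computation carries the proof) this is harmless.
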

\begin{proof} Let $P,Q\in\mathcal{L}(H)$ be two harmonious projections. It needs only to prove that
\begin{equation}\label{equ:norm equivalent for the angle-1}\Vert P QP_{\overline{\mathcal{R}(2I-P-Q)}}\Vert=\Vert (I-P)(I-Q)P_{\overline{\mathcal{R}(P+Q)}}\Vert.\end{equation}
Let $H_i, P_i$ $(1\le i\le 6$) be defined by \eqref{eqn:defn of H1 and H2}--\eqref{eqn:defn of P6}, respectively. Denote $I_{H_i}$ simply by $I$ for $1\le i\le 6$.
Let $U_{P,Q}$ be defined by \eqref{equ:defn of V} and
for each $n\in\mathbb{N}$, put
\begin{align}\label{eqn:defn of angle A n}&A_n=U_{P,Q}PQ(2I-P-Q)\Big[2I-P-Q+\frac{I}{n}\Big]^{-1}U_{P,Q}^*,\\
\label{eqn:defn of angle B n}&B_n=U_{P,Q}(I-P)(I-Q)(P+Q)\Big[P+Q+\frac{I}{n}\Big]^{-1}U_{P,Q}^*.
\end{align}
 It can be deduced directly from \eqref{equ:decomposition of P}, \eqref{equ:decomposition of Q} and \eqref{equ:2 by 2 blocked operator matrix for Q-2} that
\begin{eqnarray*}\label{equ:introduce M1}&&U_{P,Q}(PQ)U_{P,Q}^*=I\oplus 0\oplus 0\oplus 0\oplus M_1,\\
\label{equ:decomposition of 2I-P-Q}&&U_{P,Q}(2I-P-Q)U_{P,Q}^*=0\oplus I\oplus I\oplus 2I\oplus M_2,\nonumber\\
\label{equ:decomposition of 2I-P-Q+inverse of n}&&U_{P,Q}\big[2I-P-Q+\frac{I}{n}\big]^{-1}U_{P,Q}^*=nI\oplus \frac{nI}{n+1}\oplus\frac{nI}{n+1}\oplus \frac{nI}{2n+1}\oplus X_n^{-1},\nonumber
\end{eqnarray*}
where
\begin{align*}\label{equ:expression of M1}M_1&=\left(\begin{array}{cc}Q_0&Q_0^\frac{1}{2}(I-Q_0)^\frac{1}{2}U_0\\0&0\end{array}\right),\\
M_2&=\left(\begin{array}{cc}I-Q_0&-Q_0^\frac{1}{2}(I-Q_0)^\frac{1}{2}U_0\\-U_0^* Q_0^\frac{1}{2}(I-Q_0)^\frac{1}{2}&I+U_0^*Q_0U_0\end{array}\right),\nonumber\\
X_n&=\left(\begin{array}{cc}\frac{n+1}{n}I-Q_0&-Q_0^\frac{1}{2}(I-Q_0)^\frac{1}{2}U_0\\-U_0^* Q_0^\frac{1}{2}(I-Q_0)^\frac{1}{2}&\frac{n+1}{n}I+U_0^*Q_0U_0\end{array}\right).\nonumber
\end{align*}
Using the above equations and \eqref{eqn:defn of angle A n}, we obtain
\begin{equation}\label{equ:simplified expression of A n}A_n=0\oplus 0\oplus 0\oplus 0\oplus M_1M_2X_n^{-1}.\end{equation}
Similarly, we have
\begin{eqnarray*}\label{equ:introduce N1}&&U_{P,Q}(I-P)(I-Q)U_{P,Q}^*=0\oplus 0\oplus 0\oplus I\oplus N_1,\\
&&U_{P,Q}(P+Q)U_{P,Q}^*=2I\oplus I\oplus I\oplus 0\oplus N_2,\nonumber\\
&&U_{P,Q}\big[P+Q+\frac{I}{n}\big]^{-1}U_{P,Q}^*=\frac{nI}{2n+1}\oplus \frac{nI}{n+1}\oplus\frac{nI}{n+1}\oplus nI\oplus Y_n^{-1},\nonumber
\end{eqnarray*}
where
\begin{align*}\label{equ:expression of N1}N_1&=\left(\begin{array}{cc}0&0\\-U_0^* Q_0^\frac{1}{2}(I-Q_0)^\frac{1}{2}&U_0^*Q_0U_0\end{array}\right),\\
N_2&=\left(\begin{array}{cc}I+Q_0&Q_0^\frac{1}{2}(I-Q_0)^\frac{1}{2}U_0\\U_0^* Q_0^\frac{1}{2}(I-Q_0)^\frac{1}{2}&I-U_0^*Q_0U_0\end{array}\right),\nonumber\\
Y_n&=\left(\begin{array}{cc}\frac{n+1}{n}I+Q_0&Q_0^\frac{1}{2}(I-Q_0)^\frac{1}{2}U_0\\U_0^* Q_0^\frac{1}{2}(I-Q_0)^\frac{1}{2}&\frac{n+1}{n}I-U_0^*Q_0 U_0\end{array}\right).\nonumber
\end{align*}
Using the above equations and \eqref{eqn:defn of angle B n}, we obtain
\begin{equation}\label{equ:simplified expression of B n}B_n=0\oplus 0\oplus 0\oplus 0\oplus N_1N_2Y_n^{-1}.\end{equation}

Let $\widetilde{U_0}\in\mathcal{L}(H_5\oplus H_6)$ be the unitary defined by
\begin{eqnarray*}\widetilde{U_0}=\left(
 \begin{array}{cc}
 0 & U_0 \\
 -U_0^* & 0 \\
 \end{array}
 \right).
\end{eqnarray*}
Then $\widetilde{U_0}^*=-\widetilde{U_0}$ and
\begin{equation}\label{equ:seperated relationship}\widetilde{U_0}M_1\widetilde{U_0}^*=N_1,\quad \widetilde{U_0}M_2\widetilde{U_0}^*=N_2, \quad \widetilde{U_0}X_n\widetilde{U_0}^*=Y_n.
\end{equation}
Hence $\widetilde{U_0}X_n^{-1}\widetilde{U_0}^*=Y_n^{-1}$. Therefore, by \eqref{equ:simplified expression of A n}--\eqref{equ:seperated relationship}, we have
\begin{equation}\label{equ:unitary equivalent of A n and B n}\widetilde{U}A_n \widetilde{U}^*=B_n \ \mbox{for each $n\in\mathbb{N}$}, \end{equation}
where $\widetilde{U}$ is the unitary defined by
\begin{eqnarray*}\widetilde{U}=I_{\oplus_{i=1}^4 H_i}\oplus \widetilde{U_0}\in \mathcal{L}\Big(\oplus_{i=1}^6 H_i\Big).\end{eqnarray*}
In view of \eqref{eqn:defn of angle A n}, \eqref{eqn:defn of angle B n}, \eqref{equ:unitary equivalent of A n and B n} and Lemma~\ref{lem:the strict topology of the projection of the range-1}, we observe that for every $x\in H$,
\begin{align*}\Vert (I-P)(I-Q)P_{\overline{\mathcal{R}(P+Q)}}\,x\Vert&=\lim_{n\to\infty}\Vert U_{P,Q}^* B_n U_{P,Q}x\Vert\\
&=\lim_{n\to\infty}\Vert U_{P,Q}^* \widetilde{U} A_n \widetilde{U}^*U_{P,Q}x\Vert\\
&=\lim_{n\to\infty}\Vert A_n \widetilde{U}^*U_{P,Q}x\Vert\\
&=\Vert P QP_{\overline{\mathcal{R}(2I-P-Q)}}\,U^*_{P,Q}\widetilde{U}^*U_{P,Q}x\Vert,
\end{align*}
which gives \eqref{equ:norm equivalent for the angle-1}, since $U^*_{P,Q}\widetilde{U}^*U_{P,Q}$ is unitary and $x\in H$ is arbitrary.
\end{proof}

\begin{cor}\label{cor:application of Halmos two projections-1} Equation \eqref{equ:motivation equation} is true for every two projections $P$ and $Q$ on a Hilbert space.
\end{cor}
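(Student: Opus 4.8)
The plan is to deduce the corollary directly from Lemma~\ref{lem:application of Halmos two projections-1} by observing that, over the complex field, the hypotheses of that lemma are automatically in force. First I would recall that in a Hilbert space every closed subspace is orthogonally complemented; in particular each of the four closures listed in \eqref{equ:four closures} is orthogonally complemented in $H$, so every pair of projections $P,Q$ on $H$ is harmonious in the sense of Definition~\ref{defn:harmonious projections}. Lemma~\ref{lem:application of Halmos two projections-1} then applies and yields \eqref{equ:norm equivalent for the angle-1}, that is,
\[\Vert PQP_{\overline{\mathcal{R}(2I-P-Q)}}\Vert=\Vert(I-P)(I-Q)P_{\overline{\mathcal{R}(P+Q)}}\Vert.\]

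Next I would translate this identity into \eqref{equ:motivation equation} by passing to orthogonal complements. Since $\overline{\mathcal{R}(P+Q)}^\bot=\mathcal{N}(P)\cap\mathcal{N}(Q)$ (as noted in the introduction) and complements of closed subspaces of $H$ are complemented, we have $P_{\overline{\mathcal{R}(P+Q)}}=I-P_{\mathcal{N}(P)\cap\mathcal{N}(Q)}$. Applying the same fact to the projections $I-P$ and $I-Q$ gives $\overline{\mathcal{R}(2I-P-Q)}^\bot=\mathcal{N}(I-P)\cap\mathcal{N}(I-Q)=\mathcal{R}(P)\cap\mathcal{R}(Q)$, hence $P_{\overline{\mathcal{R}(2I-P-Q)}}=I-P_{\mathcal{R}(P)\cap\mathcal{R}(Q)}$. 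Substituting these two identities into the displayed equation reproduces exactly \eqref{equ:motivation equation}.

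The hard part here is genuinely absent: the whole argument rests on the single observation that harmoniousness is automatic over $\mathbb{C}$, which is just the statement that Hilbert spaces have no non-complemented closed subspaces. The only point that merits a line of care is to note that the projections $P_{\mathcal{R}(P)\cap\mathcal{R}(Q)}$ and $P_{\mathcal{N}(P)\cap\mathcal{N}(Q)}$ appearing in \eqref{equ:motivation equation} are well defined in this setting — again because both intersections are closed subspaces of a Hilbert space — so that the equation is meaningful before it is proved.
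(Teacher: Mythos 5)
Your proposal is correct and follows essentially the same route as the paper: the paper's proof of this corollary is the one-line observation that any two projections on a Hilbert space are harmonious (every closed subspace being orthogonally complemented), after which Lemma~\ref{lem:application of Halmos two projections-1} applies directly. Your additional translation between \eqref{equ:norm equivalent for the angle-1} and \eqref{equ:motivation equation} via $\overline{\mathcal{R}(P+Q)}^\bot=\mathcal{N}(P)\cap\mathcal{N}(Q)$ is exactly the reduction already built into the paper's proof of that lemma, so nothing is missing.
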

\begin{proof}Since any two projections on a Hilbert space are harmonious, the conclusion follows immediately from Lemma~\ref{lem:application of Halmos two projections-1}.
\end{proof}

Lemma~\ref{lem:application of Halmos two projections-1} can in fact be improved. To this end, we need a lemma as follows.

\begin{lem}\label{lem:technical lemma+1} Let $T\in\mathcal{L}(H)$ be positive such that $\overline{\mathcal{R}(T)}$ is orthogonally complemented in $H$. For every $n\in \mathbb{N}$, let $T_n$ be defined as in Lemma~\ref{lem:the strict topology of the projection of the range-1}. Then
\begin{equation}\label{equ:technical lemma+1}\lim_{n\to\infty}\|ST_n\|=\|SP_{\overline{\mathcal{R}(T)}}\|\ \mbox{for every $S\in \mathcal{L}(H)$}.\end{equation}
\end{lem}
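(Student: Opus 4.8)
The plan is to prove the norm identity $\lim_{n\to\infty}\|ST_n\|=\|SP_{\overline{\mathcal{R}(T)}}\|$ by establishing the two inequalities separately, exploiting the known strict-topology convergence $T_n\to P:=P_{\overline{\mathcal{R}(T)}}$ from Lemma~\ref{lem:the strict topology of the projection of the range-1} together with the uniform bound $\|T_n\|\le 1$ (which holds since $T_n=(\frac1nI+T)^{-1}T$ is a positive contraction, being $f_n(T)$ for $f_n(\lambda)=\frac{\lambda}{\frac1n+\lambda}\in[0,1)$). First I would record the elementary algebraic facts $ST_n=ST_nP$ and $SP=SPP$ — more precisely $T_nP=T_n$ and $PT_n=T_n$, because $\mathcal{R}(T_n)\subseteq\overline{\mathcal{R}(T)}$ and $T_n$ is self-adjoint — so that both operators in question factor through the complemented submodule $\overline{\mathcal{R}(T)}$.

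For the inequality $\limsup_n\|ST_n\|\le\|SP\|$, the idea is to write $ST_n=SP_nT_n$... no, more directly: since $T_n=PT_n$ we get $ST_n=SPT_n$, hence $\|ST_n\|\le\|SP\|\cdot\|T_n\|\le\|SP\|$ for every $n$, using $\|T_n\|\le1$. This already gives one direction cleanly and uniformly. The reverse inequality $\liminf_n\|ST_n\|\ge\|SP\|$ is the place where the strict-topology convergence is genuinely needed: for any $\varepsilon>0$ choose $x\in H$ with $\|x\|\le1$ and $\|SPx\|\ge\|SP\|-\varepsilon$; then $ST_nx\to SPx$ because $T_nx\to Px$ in norm (strict convergence applied to the vector $x$) and $S$ is bounded, so $\|ST_n\|\ge\|ST_nx\|\to\|SPx\|\ge\|SP\|-\varepsilon$. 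Letting $\varepsilon\to0$ finishes it. Combining the two estimates yields $\lim_n\|ST_n\|=\|SP\|$.

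I expect the main obstacle, such as it is, to be essentially bookkeeping: making sure the uniform bound $\|T_n\|\le1$ is justified (via continuous functional calculus in the $C^*$-algebra $\mathcal{L}(H)$, noting $0\le T_n\le(\frac1nI+T)^{-1}(\frac1nI+T)=I$), and making sure the identities $PT_n=T_nP=T_n$ are correctly derived — the cleanest route is $\mathcal{R}(T_n)\subseteq\overline{\mathcal{R}(T)}=\mathcal{R}(P)$ giving $PT_n=T_n$, and then $T_nP=(PT_n^*)^*=(PT_n)^*=T_n^*=T_n$ by self-adjointness of $T_n$ and $P$. No delicate analysis beyond the already-quoted strict convergence is required; in particular, $S$ need not have any special properties and the argument works verbatim for arbitrary $S\in\mathcal{L}(H)$. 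I would present the proof in four or five lines: state $\|T_n\|\le1$ and $PT_n=T_nP=T_n$, deduce $\|ST_n\|=\|SPT_n\|\le\|SP\|$, then run the vector argument for the lower bound, and conclude.
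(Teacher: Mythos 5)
Your proof is correct, and its overall shape matches the paper's: both arguments split the identity into a uniform upper bound $\|ST_n\|\le\|SP_{\overline{\mathcal{R}(T)}}\|$ and a lower bound extracted from the pointwise strict convergence $T_nx\to P_{\overline{\mathcal{R}(T)}}x$ of Lemma~\ref{lem:the strict topology of the projection of the range-1}; your lower-bound step is essentially identical to the paper's. Where you differ is the upper bound: the paper uses the operator inequalities $0\le T_n^2\le T_{n+1}^2\le P_{\overline{\mathcal{R}(T)}}$, conjugates by $S$, and invokes norm monotonicity for positive elements (\cite[Proposition~1.3.5]{Pedersen}), which simultaneously shows that $\|ST_n\|$ is nondecreasing and hence that the limit exists outright; you instead factor $ST_n=SP_{\overline{\mathcal{R}(T)}}T_n$ via $P_{\overline{\mathcal{R}(T)}}T_n=T_n$ (range inclusion plus self-adjointness, both correctly justified) and use the contraction bound $\|T_n\|\le 1$, closing the argument with a $\limsup$/$\liminf$ squeeze. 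Your route is slightly more elementary, avoiding the appeal to order-monotonicity of the norm on positive elements, at the mild cost of not exhibiting the monotonicity of the sequence $\|ST_n\|$; both are complete and valid proofs.
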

\begin{proof} Clearly, $0\le T_n^2\le T_{n+1}^2\le P_{\overline{\mathcal{R}(T)}}$ for each $n\in\mathbb{N}$, so
$$0\le S T_n^2 S^*\le S T_{n+1}^2 S^*\le SP_{\overline{\mathcal{R}(T)}} S^*,$$
which gives by \cite[Proposition~1.3.5]{Pedersen} that
$$\| ST_n\|^2\le \|ST_{n+1}\|^2\le \|SP_{\overline{\mathcal{R}(T)}}\|^2\ \mbox{for each $n\in\mathbb{N}$}.$$
Hence $\lim\limits_{n\to\infty}\|ST_n\|$ exists and $\lim\limits_{n\to\infty}\|ST_n\|\le \|SP_{\overline{\mathcal{R}(T)}}\|$. On the other hand, given any $x\in H$, by Lemma~\ref{lem:the strict topology of the projection of the range-1} we have
\begin{align*}&\|SP_{\overline{\mathcal{R}(T)}}x\|=\lim_{n\to\infty} \|ST_n x\|\le \big(\lim\limits_{n\to\infty}\|ST_n\|\big)\|x\|,
\end{align*}
therefore $\|SP_{\overline{\mathcal{R}(T)}}\|\le \lim\limits_{n\to\infty}\|ST_n\|$. This completes the proof of \eqref{equ:technical lemma+1}.
\end{proof}

A generalization of Lemma~\ref{lem:application of Halmos two projections-1} is as follows.
\begin{lem}\label{lem:the main result+2}Let $P, Q\in\mathcal{L}(H)$ be two projections such that $\overline{\mathcal{R}(P+Q)}$ and $\overline{\mathcal{R}(2I-P-Q)}$
are both orthogonally complemented in $H$. Then equation \eqref{equ:motivation equation} is valid.
\end{lem}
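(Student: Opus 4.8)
The plan is to recast \eqref{equ:motivation equation} as the norm identity \eqref{equ:norm equivalent for the angle-1} between the two complemented closures, evaluate both sides as strict-limit norms via Lemma~\ref{lem:technical lemma+1}, strip off the ``common part'' of $P$ and $Q$, and finish by exploiting the symmetry $(P,Q)\leftrightarrow(I-P,I-Q)$. First I would invoke Theorem~\ref{The: equivalent condition of orthogonal }: its hypothesis (complementarity of $\overline{\mathcal{R}(P+Q)}$) and its item~(iv) (complementarity of $\overline{\mathcal{R}(2I-P-Q)}$) are both in force, so $\mathcal{R}(P)\cap\mathcal{R}(Q)$ is orthogonally complemented with $\bigl(\mathcal{R}(P)\cap\mathcal{R}(Q)\bigr)^\perp=\overline{\mathcal{R}(2I-P-Q)}$, whence $I-P_{\mathcal{R}(P)\cap\mathcal{R}(Q)}=P_{\overline{\mathcal{R}(2I-P-Q)}}$; and since $\overline{\mathcal{R}(P+Q)}^\perp=\mathcal{N}(P)\cap\mathcal{N}(Q)$, Remark~\ref{rem:double orthogonality} gives $\bigl(\mathcal{N}(P)\cap\mathcal{N}(Q)\bigr)^\perp=\overline{\mathcal{R}(P+Q)}$ and hence $I-P_{\mathcal{N}(P)\cap\mathcal{N}(Q)}=P_{\overline{\mathcal{R}(P+Q)}}$. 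Thus \eqref{equ:motivation equation} is exactly \eqref{equ:norm equivalent for the angle-1}. Putting $A_n=PQ(2I-P-Q)\bigl(2I-P-Q+\tfrac1nI\bigr)^{-1}$ and $B_n=(I-P)(I-Q)(P+Q)\bigl(P+Q+\tfrac1nI\bigr)^{-1}$, Lemma~\ref{lem:technical lemma+1} applied with $(S,T)=(PQ,2I-P-Q)$ and with $(S,T)=((I-P)(I-Q),P+Q)$ turns \eqref{equ:norm equivalent for the angle-1} into the statement $\lim_{n}\|A_n\|=\lim_{n}\|B_n\|$.

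Next I would reduce to the case $\mathcal{R}(P)\cap\mathcal{R}(Q)=\mathcal{N}(P)\cap\mathcal{N}(Q)=\{0\}$. Set $E=\bigl(\mathcal{R}(P)\cap\mathcal{R}(Q)\bigr)\dotplus\bigl(\mathcal{N}(P)\cap\mathcal{N}(Q)\bigr)$; the two summands are mutually orthogonal (a vector of the first lies in $\mathcal{R}(P)$, one of the second in $\mathcal{N}(P)$) and, by the previous paragraph, each is orthogonally complemented, so $E$ is orthogonally complemented and $P,Q$ leave $E$ and $E^\perp$ invariant. On $E$ one has $PQ=0$ on $\mathcal{N}(P)\cap\mathcal{N}(Q)$ and $2I-P-Q=0$ on $\mathcal{R}(P)\cap\mathcal{R}(Q)$, so $A_n|_E=B_n|_E=0$; hence it suffices to treat $P|_{E^\perp},Q|_{E^\perp}$. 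On $E^\perp$ the two intersections are trivial, so $\overline{\mathcal{R}(P+Q)}$ and $\overline{\mathcal{R}(2I-P-Q)}$, being orthogonally complemented with zero orthogonal complement, equal $E^\perp$; therefore $\lim_n\|A_n\|=\|PQ\|$ and $\lim_n\|B_n\|=\|(I-P)(I-Q)\|$, and the whole problem collapses to proving $\|PQ\|=\|(I-P)(I-Q)\|$ whenever $\mathcal{R}(P)\cap\mathcal{R}(Q)=\mathcal{N}(P)\cap\mathcal{N}(Q)=\{0\}$.

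This last identity is the heart of the matter, and it is the step I expect to be the main obstacle, since here $P$ and $Q$ need not be harmonious and Lemma~\ref{lem:application of Halmos two projections-1} is not directly available; moreover the natural argument involves spectral projections, which need not be adjointable in the Hilbert $C^{*}$-module setting. I would compress $I-P$ along the orthogonal decomposition $E^\perp=\mathcal{R}(Q)\dotplus\mathcal{N}(Q)$, writing it as the projection operator matrix $\left(\begin{smallmatrix}a&b\\b^{*}&d\end{smallmatrix}\right)$ with $a=Q(I-P)Q|_{\mathcal{R}(Q)}$, $d=(I-Q)(I-P)(I-Q)|_{\mathcal{N}(Q)}$ and $b\in\mathcal{L}(\mathcal{N}(Q),\mathcal{R}(Q))$; from $\left(\begin{smallmatrix}a&b\\b^{*}&d\end{smallmatrix}\right)^{2}=\left(\begin{smallmatrix}a&b\\b^{*}&d\end{smallmatrix}\right)$ one reads off $ab=b(I-d)$ and $b^{*}b=d-d^{2}$. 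Since $0\le a\le I$ and $QPQ|_{\mathcal{R}(Q)}=I-a$, one gets $\|PQ\|^{2}=\|PQP\|=\|QPQ\|=\|I-a\|=1-\inf\mathrm{spec}(a)$, and likewise $\|(I-P)(I-Q)\|^{2}=\|(I-Q)(I-P)(I-Q)\|=\|d\|=\sup\mathrm{spec}(d)$. Next, $\mathcal{N}(b)=\overline{\mathcal{R}(b^{*})}^{\perp}=\overline{\mathcal{R}(b^{*}b)}^{\perp}=\mathcal{N}(d-d^{2})=\mathcal{N}(d)\dotplus\mathcal{N}(I-d)$ by Lemma~\ref{lem:Range closure of TT and T}, while a direct computation with $\langle x,x\rangle=\langle Px,Px\rangle+\langle(I-P)x,(I-P)x\rangle$ shows $\mathcal{N}(I-d)\subseteq\mathcal{N}(P)\cap\mathcal{N}(Q)=\{0\}$, so $\mathcal{N}(b)=\mathcal{N}(d)$. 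Iterating $ab=b(I-d)$ gives $p(a)b=b\,p(I-d)$ for polynomials $p$, hence $f(a)b=b\,f(I-d)$ for every $f\in C[0,1]$; choosing $f(t)=\max\{\inf\mathrm{spec}(a)-t,0\}$ we have $f(a)=0$, so $b\,f(I-d)=0$, i.e.\ $\mathcal{R}(f(I-d))\subseteq\mathcal{N}(b)=\mathcal{N}(d)$, i.e.\ $d\,f(I-d)=0$; evaluating this through the continuous functional calculus of $d$ forces $\mathrm{spec}(d)\subseteq[0,\,1-\inf\mathrm{spec}(a)]$, that is $\|(I-P)(I-Q)\|^{2}\le\|PQ\|^{2}$. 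Applying the same reasoning to the pair $(I-P,I-Q)$ (whose analogous intersections are again $\{0\}$) yields the reverse inequality, so $\|PQ\|=\|(I-P)(I-Q)\|$, which completes the proof. The only non-routine point is thus the spectral comparison above; the device of replacing spectral projections by the continuous function $f$ and routing everything through Lemma~\ref{lem:Range closure of TT and T} and the functional calculi of $a$ and $d$ is exactly what keeps the argument inside $\mathcal{L}(H)$.
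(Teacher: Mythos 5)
Your argument is essentially correct, but it follows a genuinely different route from the paper. The paper also reduces \eqref{equ:motivation equation} to \eqref{equ:norm equivalent for the angle-1} and uses Lemma~\ref{lem:technical lemma+1} to express both sides as limits of norms of functional-calculus expressions in $P,Q$; but it then transfers everything through a faithful unital representation $\pi:\mathcal{L}(H)\to\mathcal{L}(E)$ on a Hilbert space, where all closures are automatically complemented, and quotes the Hilbert-space case (Corollary~\ref{cor:application of Halmos two projections-1}, itself resting on the harmonious case Lemma~\ref{lem:application of Halmos two projections-1} and Theorem~\ref{thm:Halmos' two projections theorem}). You instead stay inside $\mathcal{L}(H)$: you split off $E=(\mathcal{R}(P)\cap\mathcal{R}(Q))\dotplus(\mathcal{N}(P)\cap\mathcal{N}(Q))$, reduce to the generic-position case, and prove $\|PQ\|=\|(I-P)(I-Q)\|$ directly by compressing $I-P$ along $\mathcal{R}(Q)\dotplus\mathcal{N}(Q)$, using the intertwining $f(a)b=bf(I-d)$, the kernel identity $\mathcal{N}(d-d^2)=\mathcal{N}(d)\dotplus\mathcal{N}(I-d)$, and the function $f(t)=\max\{\inf\mathrm{spec}(a)-t,0\}$ in place of spectral projections. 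This buys self-containedness: your proof does not need the Halmos-type theorem of Section~4 nor the representation-theoretic trick, and the core norm identity you establish is of independent interest. What the paper's route buys is brevity and the avoidance of the reduction step altogether.

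One step in your reduction is under-justified and is exactly the kind of point that needs care in Hilbert $C^*$-modules: you assert that on $E^\perp$ the closures $\overline{\mathcal{R}(P'+Q')}$ and $\overline{\mathcal{R}(2I-P'-Q')}$ are ``orthogonally complemented with zero orthogonal complement, hence equal $E^\perp$.'' Zero orthogonal complement alone never yields this (Section~3 of the paper is precisely a counterexample to that inference), and the complementarity of the restricted closures inside $E^\perp$ is what you have not shown. The claim is nevertheless true and the fix is short: since $\mathcal{N}(P)\cap\mathcal{N}(Q)=\overline{\mathcal{R}(P+Q)}^\perp$ and $\mathcal{R}(P)\cap\mathcal{R}(Q)=\overline{\mathcal{R}(2I-P-Q)}^\perp$ with both closures complemented, Remark~\ref{rem:double orthogonality} and Theorem~\ref{The: equivalent condition of orthogonal } give $E^\perp=\overline{\mathcal{R}(P+Q)}\cap\overline{\mathcal{R}(2I-P-Q)}$; then for $x\in E^\perp\subseteq\overline{\mathcal{R}(P+Q)}$ pick $y_n$ with $(P+Q)y_n\to x$ and apply $P_{E^\perp}$, which commutes with $P+Q$, to get $(P+Q)P_{E^\perp}y_n\to x$, so $\overline{\mathcal{R}(P'+Q')}=E^\perp$, and likewise for $2I-P'-Q'$. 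With this patch inserted, your proof is complete.
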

\begin{proof} It needs only to prove that \eqref{equ:norm equivalent for the angle-1} is true. Since $\mathcal{L}(H)$ is a unital $C^*$-algebra, there exists a Hilbert space $E$ and a $C^*$-morphism $\pi:\mathcal{L}(H)\to \mathcal{L}(E)$ such that $\pi$ is faithful \cite[Corollary~3.7.5]{Pedersen}. Replacing $E$ with $\pi(I_{H})E$ if necessary, we may assume that $\pi$ is unital. For every $n\in\mathbb{N}$, let
\begin{align*}&X_n=(2I-P-Q)\big(2I-P-Q+\frac{1}{n}I\big)^{-1}, \\
&Y_n=\pi(X_n)=\big(2I-\pi(P)-\pi(Q)\big)\big(2I-\pi(P)-\pi(Q)+\frac{1}{n}I\big)^{-1},\\
&Z_n=(P+Q)(P+Q+\frac{1}{n}I)^{-1}, \\
&W_n=\pi(Z_n)=\big(\pi(P)+\pi(Q)\big)\big(\pi(P)+\pi(Q)+\frac{1}{n}I\big)^{-1}.
\end{align*}
Then according to Lemma~\ref{lem:technical lemma+1} and Corollary~\ref{cor:application of Halmos two projections-1}, we have
\begin{align*}\Vert P QP_{\overline{\mathcal{R}(2I-P-Q)}}\Vert&=\lim_{n\to\infty}\|P Q X_n\|=\lim_{n\to\infty}\|\pi(P)\pi(Q)Y_n\| \\
&=\Big\|\pi(P)\pi(Q)P_{\overline{\mathcal{R}\big(2I-\pi(P)-\pi(Q)\big)}}\Big\|\\
&=\Big\|\big(I-\pi(P)\big)\big(I-\pi(Q)\big)P_{\overline{\mathcal{R}\big(\pi(P)+\pi(Q)\big)}}\Big\|\\
&=\lim_{n\to\infty} \Big\|\big(I-\pi(P)\big)\big(I-\pi(Q)\big)W_n\Big\|\\
&=\lim_{n\to\infty} \|(I-P)(I-Q)Z_n\|\\
&=\|(I-P)(I-Q)P_{\overline{\mathcal{R}(P+Q)}}\|.
\end{align*}
The proof of \eqref{equ:norm equivalent for the angle-1} is then finished.
\end{proof}

It is remarkable that Lemma~\ref{lem:the main result+2} above can be generalized furthermore. Indeed, we will prove that equation \eqref{equ:motivation equation} is always true whenever
$\mathcal{R}(P)\cap\mathcal{R}(Q)$ and $\mathcal{N}(P)\cap\mathcal{N}(Q)$ are both orthogonally complemented in $H$. To this end, we need a couple of lemmas.

Recall that the Moore-Penrose inverse $T^\dag$ of an operator $T\in \mathcal{L}(H,K)$  is the
unique element $X\in \mathcal{L}(K,H)$ which satisfies
\begin{equation*} \label{equ:m-p inverse} TXT=T,\quad XTX=X,\quad (TX)^*=TX, \quad (XT)^*=XT.\end{equation*}
If such an operator  $T^\dag$ exists, then  $T$ is said to be M-P invertible.

\begin{lem}\label{lem:Xu-Sheng condition of M-P invertible}{\rm \cite[Theorem~2.2]{Xu-Sheng}}\ For every $T\in\mathcal{L}(H,K)$, $T$ is M-P invertible if and only if $\mathcal{R}(T)$ is closed.
\end{lem}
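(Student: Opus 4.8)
The plan is to prove the two implications separately; the direction ``M-P invertible $\Rightarrow$ closed range'' is formal, and essentially all of the work sits in the converse.

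For the forward direction I would argue as follows. Suppose $T$ is M-P invertible and put $E=TT^\dag\in\mathcal{L}(K)$. The identity $(TX)^*=TX$ (with $X=T^\dag$) gives $E=E^*$, and $XTX=X$ gives $E^2=TT^\dag TT^\dag=TT^\dag=E$, so $E$ is a projection. Since $TXT=T$ we have $\mathcal{R}(T)\subseteq\mathcal{R}(E)$, while $\mathcal{R}(E)\subseteq\mathcal{R}(T)$ is obvious; hence $\mathcal{R}(T)=\mathcal{R}(E)$ is the range of a projection, so it is orthogonally complemented and in particular closed.

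For the converse, assume $\mathcal{R}(T)$ is closed. The crucial preliminary is the structure theorem for an adjointable operator with closed range (part of the standard Hilbert $C^*$-module toolkit, in the same circle of results as Lemma~\ref{lem:Wegge-Olsen}; see \cite{Lance,Wegge-Olsen}): when $\mathcal{R}(T)$ is closed, each of $\mathcal{R}(T)$, $\mathcal{N}(T)$, $\mathcal{R}(T^*)$, $\mathcal{N}(T^*)$ is orthogonally complemented, one has the orthogonal decompositions $K=\mathcal{R}(T)\dotplus\mathcal{N}(T^*)$ and $H=\mathcal{R}(T^*)\dotplus\mathcal{N}(T)$, and moreover $\mathcal{R}(T^*T)=\mathcal{R}(T^*)$, so $T^*T$ has closed range as well. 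I expect this to be the main obstacle: over a Hilbert space all of this is immediate from the projection theorem, whereas over a general $C^*$-algebra closedness of $\mathcal{R}(T)$ does not by itself produce a bounded idempotent onto it, and the complementarity has to be extracted by a genuine argument.

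Granting that, I would construct $T^\dag$ explicitly. Set $A=T^*T+P_{\mathcal{N}(T)}\in\mathcal{L}(H)$. Because $\mathcal{R}(T^*)$ is orthogonal to $\mathcal{N}(T)$ and $TP_{\mathcal{N}(T)}=0$, we get $P_{\mathcal{N}(T)}T^*T=0=T^*TP_{\mathcal{N}(T)}$, so with respect to $H=\mathcal{R}(T^*)\dotplus\mathcal{N}(T)$ the operator $A$ is block-diagonal, equal to $T^*T|_{\mathcal{R}(T^*)}$ on the first summand and to the identity on the second. The first block is positive and injective with range $\mathcal{R}(T^*T)=\mathcal{R}(T^*)$, hence invertible in $\mathcal{L}(\mathcal{R}(T^*))$; therefore $A$ is positive and invertible in $\mathcal{L}(H)$, with $A^{-1}T^*T=P_{\mathcal{R}(T^*)}=I-P_{\mathcal{N}(T)}$. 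I then claim that $X:=A^{-1}T^*$ is the Moore--Penrose inverse of $T$: using that $A^{-1}$ is block-diagonal (so commutes with $P_{\mathcal{R}(T^*)}$), that $P_{\mathcal{R}(T^*)}T^*=T^*$, and that $TP_{\mathcal{N}(T)}=0$, one checks directly that $XT=P_{\mathcal{R}(T^*)}$ is self-adjoint, $TX=TA^{-1}T^*$ is self-adjoint since $A^{-1}$ is, $TXT=TP_{\mathcal{R}(T^*)}=T$, and $XTX=P_{\mathcal{R}(T^*)}A^{-1}T^*=A^{-1}P_{\mathcal{R}(T^*)}T^*=A^{-1}T^*=X$. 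This verifies all four Moore--Penrose identities, so $T$ is M-P invertible. (Alternatively, one can avoid the explicit formula: $T$ restricts to a bijective adjointable operator $\mathcal{R}(T^*)\to\mathcal{R}(T)$ whose inverse is again adjointable — a bijective adjointable $S$ satisfies $S^{-1}=(S^*S)^{-1}S^*$ — and then $T^\dag$ is reassembled from this inverse together with the projections $P_{\mathcal{R}(T^*)}$ and $P_{\mathcal{R}(T)}$.)
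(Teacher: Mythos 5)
Your proof is correct: the forward direction via the projection $TT^\dag$ and the converse via the explicit formula $T^\dag=\big(T^*T+P_{\mathcal{N}(T)}\big)^{-1}T^*$ both check out, and the only nontrivial input you invoke is the closed range theorem, which the paper itself quotes (without proof) as Lemma~\ref{lem:orthogonal}, so there is no circularity. Note that the paper offers no proof of this lemma at all --- it is cited from \cite[Theorem~2.2]{Xu-Sheng} --- and your argument is essentially the standard route taken there: reduce to the orthogonal decompositions $H=\mathcal{R}(T^*)\dotplus\mathcal{N}(T)$, $K=\mathcal{R}(T)\dotplus\mathcal{N}(T^*)$ and then build $T^\dag$ from the invertible compression of $T^*T$.
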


\begin{rem}\label{rem:closeness implies orthogonality}{\rm  Let $T\in\mathcal{L}(H,K)$ be such that $\mathcal{R}(T)$ is closed. Then
both $P=T T^\dag $ and $Q=T^\dag T$  are projections such that  $\mathcal{R}(P)=\mathcal{R}(T)$ and $\mathcal{R}(Q)=\mathcal{R}(T^*)$. So in this case,
$\mathcal{R}(T)$ and $\mathcal{R}(T^*)$ are orthogonally complemented in $K$ and $H$, respectively.
}\end{rem}

\begin{lem}\label{lem:orthogonal} {\rm (cf.\,\cite[Theorem 3.2]{Lance} and \cite[Remark 1.1]{Xu-Sheng})}\ Let $T\in \mathcal{L}(H,K)$. Then the closedness of any one of the following sets
implies the closedness of the remaining three sets:
$$\mathcal{R}(T),\quad \mathcal{R}(T^*),\quad  \mathcal{R}(TT^*), \quad \mathcal{R}(T^*T).$$
If $\mathcal{R}(T)$ is closed, then $\mathcal{R}(T)=\mathcal{R}(TT^*)$,
$\mathcal{R}(T^*)=\mathcal{R}(T^*T)$ and the following  orthogonal
decompositions hold:
\begin{equation*}\label{equ:orthogonal decomposition} H=\mathcal{R}(T^*)\dotplus \mathcal{N}(T), \quad K=\mathcal{R}(T)\dotplus \mathcal{N}(T^*).\end{equation*}
\end{lem}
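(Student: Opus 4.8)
The plan is to prove the four-way equivalence by a short cycle of implications and then extract the two orthogonal decompositions from Remark~\ref{rem:closeness implies orthogonality}. Throughout one should keep in mind that each of $\mathcal{R}(T),\mathcal{R}(T^*),\mathcal{R}(TT^*),\mathcal{R}(T^*T)$ is the range of an element of $\mathcal{L}$, so none of them is closed for free; the working hypothesis in each implication is precisely the closedness of one of them.

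First I would record the ``easy half'' via Lemma~\ref{lem:Range closure of TT and T}: for any $S\in\mathcal{L}$ one has the chain $\mathcal{R}(SS^*)\subseteq\mathcal{R}(S)\subseteq\overline{\mathcal{R}(S)}=\overline{\mathcal{R}(SS^*)}$, so if $\mathcal{R}(SS^*)$ is closed then $\mathcal{R}(S)=\mathcal{R}(SS^*)$ and both are closed. Taking $S=T$ gives ``$\mathcal{R}(TT^*)$ closed $\Rightarrow$ $\mathcal{R}(T)$ closed'' together with $\mathcal{R}(T)=\mathcal{R}(TT^*)$; taking $S=T^*$ gives ``$\mathcal{R}(T^*T)$ closed $\Rightarrow$ $\mathcal{R}(T^*)$ closed'' and $\mathcal{R}(T^*)=\mathcal{R}(T^*T)$.

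For the converse I would start from ``$\mathcal{R}(T)$ closed''. By Lemma~\ref{lem:Xu-Sheng condition of M-P invertible} the operator $T$ is M-P invertible, and by Remark~\ref{rem:closeness implies orthogonality} both $\mathcal{R}(T)$ and $\mathcal{R}(T^*)$ are orthogonally complemented (hence closed), with $P:=TT^\dag$ and $Q:=T^\dag T$ projections satisfying $\mathcal{R}(P)=\mathcal{R}(T)$ and $\mathcal{R}(Q)=\mathcal{R}(T^*)$. The Moore--Penrose identities give $TQ=T$ and $T^*P=T^*$ (the latter because $(TT^\dag)^*=TT^\dag$), whence $\mathcal{R}(TT^*)=T\bigl(\mathcal{R}(T^*)\bigr)=T\bigl(\mathcal{R}(Q)\bigr)=\mathcal{R}(TQ)=\mathcal{R}(T)$ and, symmetrically, $\mathcal{R}(T^*T)=T^*\bigl(\mathcal{R}(P)\bigr)=\mathcal{R}(T^*P)=\mathcal{R}(T^*)$; thus all four ranges are closed. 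Applying this same implication to $T^*$ in place of $T$ disposes of the hypothesis ``$\mathcal{R}(T^*)$ closed''. Chaining now closes the loop: $\mathcal{R}(TT^*)\text{ closed}\Rightarrow\mathcal{R}(T)\text{ closed}\Rightarrow$ the other three closed, and likewise starting from $\mathcal{R}(T^*T)$ or $\mathcal{R}(T^*)$; so the four conditions are equivalent, and the displayed identities $\mathcal{R}(T)=\mathcal{R}(TT^*)$, $\mathcal{R}(T^*)=\mathcal{R}(T^*T)$ hold whenever they are satisfied.

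Finally, when $\mathcal{R}(T)$ is closed, Remark~\ref{rem:closeness implies orthogonality} yields $K=\mathcal{R}(T)\dotplus\mathcal{R}(T)^\perp$ and $H=\mathcal{R}(T^*)\dotplus\mathcal{R}(T^*)^\perp$, and the trivial identities $\mathcal{R}(T)^\perp=\mathcal{N}(T^*)$, $\mathcal{R}(T^*)^\perp=\mathcal{N}(T)$, read off from $\langle Tx,y\rangle=\langle x,T^*y\rangle$, turn these into the asserted decompositions. I do not expect a serious obstacle here. The one point that genuinely needs the Hilbert $C^*$-module machinery rather than a naive Hilbert space argument is the passage ``$\mathcal{R}(T)$ closed $\Rightarrow$ $\mathcal{R}(T)$ orthogonally complemented'', which must be routed through M-P invertibility (Lemma~\ref{lem:Xu-Sheng condition of M-P invertible} and Remark~\ref{rem:closeness implies orthogonality}), since closedness of a submodule is in general strictly weaker than orthogonal complementarity; the remaining care is purely combinatorial, in arranging the cycle of implications so that it actually closes.
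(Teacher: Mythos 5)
Your proposal is correct as written: the ``easy half'' ($\mathcal{R}(SS^*)$ closed $\Rightarrow \mathcal{R}(S)=\mathcal{R}(SS^*)$ closed, applied to $S=T$ and $S=T^*$) is exactly what Lemma~\ref{lem:Range closure of TT and T} gives, and the converse direction --- from $\mathcal{R}(T)$ closed, using Lemma~\ref{lem:Xu-Sheng condition of M-P invertible} and Remark~\ref{rem:closeness implies orthogonality} to get the projections $P=TT^\dag$, $Q=T^\dag T$, then $TQ=T$, $T^*P=T^*$ to conclude $\mathcal{R}(TT^*)=\mathcal{R}(T)$ and $\mathcal{R}(T^*T)=\mathcal{R}(T^*)$, and finally $\mathcal{R}(T)^\perp=\mathcal{N}(T^*)$, $\mathcal{R}(T^*)^\perp=\mathcal{N}(T)$ to obtain the two decompositions --- checks out, and the cycle of implications does close. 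Be aware, though, that the paper offers no proof of this lemma to compare against: it is quoted verbatim from \cite[Theorem~3.2]{Lance} and \cite[Remark~1.1]{Xu-Sheng}, so your argument is an internal derivation from the paper's other quoted tools rather than a reconstruction of the paper's reasoning. The one caveat worth recording is a mild circularity at the level of the literature: the Moore--Penrose criterion of \cite[Theorem~2.2]{Xu-Sheng} (Lemma~\ref{lem:Xu-Sheng condition of M-P invertible}) is itself established by means of the closed-range theorem you are proving, and Lance's own proof of \cite[Theorem~3.2]{Lance} works directly with $T^*T$ (showing $T$ is bounded below on $\mathcal{R}(T^*)$ and producing the complement by hand) without any generalized inverse. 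Within this paper, where the MP lemma is taken as a black box, your route is perfectly legitimate and arguably shorter; as a stand-alone proof of the closed-range theorem it would not be independent.
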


\begin{rem}\label{rem:ranges of P plus Q equals-1}{\rm
Suppose that $P,Q\in\mathcal{L}(H)$ are projections.  Let $T$ be defined by \eqref{equ:inclusion of two projectons}. Then it follows from \eqref{equ:relationship of the ranges of two operators-1} and Lemma~\ref{lem:orthogonal} that
\begin{equation}\label{equ:closeness of the ranges of two projections-plus}\mathcal{R}(P)+\mathcal{R}(Q)\ \mbox{is closed if and only if}\ \mathcal{R}(P+Q)\ \mbox{is closed},\end{equation}
and in this case  $\mathcal{R}(P)+\mathcal{R}(Q)=\mathcal{R}(P+Q)$.
}\end{rem}

\begin{lem}\label{lem:summation of two projections has closed range}{\rm \cite[Proposition~4.6]{Luo-Song-Xu}}\ Let $P,Q\in\mathcal{L}(H)$ be projections such that $\mathcal{R}(P)+\mathcal{R}(Q)$ is closed. Then
\begin{equation}\label{equ:conjecture is true under the closeness assumption}\mathcal{R}(I-P)+\mathcal{R}(I-Q)=\big(\mathcal{R}(P)\cap \mathcal{R}(Q)\big)^\perp.\end{equation}
\end{lem}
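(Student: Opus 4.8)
\textbf{Plan for the proof of Lemma~\ref{lem:summation of two projections has closed range}.}
The plan is to reduce the identity \eqref{equ:conjecture is true under the closeness assumption} to a statement about a single operator with closed range, namely the operator $T$ from \eqref{equ:inclusion of two projectons}, and then to invoke the orthogonal-decomposition machinery of Lemma~\ref{lem:orthogonal}. First I would record the easy inclusion. For any projections $P,Q$ one always has $\mathcal{R}(I-P)+\mathcal{R}(I-Q)\subseteq\big(\mathcal{R}(P)\cap\mathcal{R}(Q)\big)^\perp$: if $x\in\mathcal{R}(P)\cap\mathcal{R}(Q)$ then for $u,v\in H$ we compute $\langle (I-P)u+(I-Q)v,\,x\rangle=\langle u,(I-P)x\rangle+\langle v,(I-Q)x\rangle=0$ since $Px=Qx=x$; the continuity of the inner product then gives the inclusion even after taking closures, so no closedness hypothesis is needed here. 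The real content is the reverse inclusion, and this is where the closedness assumption enters.

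For the reverse inclusion the key observation is that $\mathcal{R}(P)\cap\mathcal{R}(Q)=\mathcal{N}(I-P)\cap\mathcal{N}(I-Q)=\mathcal{N}\big((I-P)+(I-Q)\big)=\mathcal{N}(2I-P-Q)$, the last equality because $(I-P)$ and $(I-Q)$ are positive. Thus $\big(\mathcal{R}(P)\cap\mathcal{R}(Q)\big)^\perp=\mathcal{N}(2I-P-Q)^\perp$. Now by hypothesis and Remark~\ref{rem:ranges of P plus Q equals-1} (applied to the projections $I-P$ and $I-Q$, noting that $\mathcal{R}(P)+\mathcal{R}(Q)$ closed forces $\mathcal{R}(I-P)+\mathcal{R}(I-Q)$ closed — this equivalence is itself something I would need to justify, e.g. via \eqref{equ:closeness of the ranges of two projections-plus} together with the elementary fact that $\mathcal{R}(I-P)+\mathcal{R}(I-Q)$ is closed iff $\mathcal{R}(2I-P-Q)$ is closed iff $\mathcal{R}(P)+\mathcal{R}(Q)$ is closed, using that $2I-P-Q$ and $P+Q$ have closed range simultaneously), the operator $S:=2I-P-Q=(I-P)+(I-Q)$ has closed range and $\mathcal{R}(I-P)+\mathcal{R}(I-Q)=\mathcal{R}(S)$. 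Since $S$ is positive and hence self-adjoint with closed range, Lemma~\ref{lem:orthogonal} gives the orthogonal decomposition $H=\mathcal{R}(S)\dotplus\mathcal{N}(S)$, and in particular $\mathcal{N}(S)^\perp=\mathcal{R}(S)$. Chaining the equalities yields
\begin{equation*}\big(\mathcal{R}(P)\cap\mathcal{R}(Q)\big)^\perp=\mathcal{N}(S)^\perp=\mathcal{R}(S)=\mathcal{R}(I-P)+\mathcal{R}(I-Q),\end{equation*}
which is exactly \eqref{equ:conjecture is true under the closeness assumption}.

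The main obstacle I anticipate is the bookkeeping needed to transfer the closedness hypothesis from $\mathcal{R}(P)+\mathcal{R}(Q)$ to the various equivalent forms ($\mathcal{R}(P+Q)$, $\mathcal{R}(2I-P-Q)$, $\mathcal{R}(I-P)+\mathcal{R}(I-Q)$). This is not deep but requires care: one should argue that $P+Q$ has closed range if and only if $2I-P-Q$ does — for instance by noting $P+Q$ and $2I-(P+Q)$ are both positive, writing $H$ via the spectral/functional-calculus decomposition of the positive operator $P+Q$ into the part where its spectrum is bounded away from $0$ and $2$ and the rest, or more economically by observing that $0$ is isolated in the spectrum of $P+Q$ precisely when $2$ is isolated in the spectrum of $2I-(P+Q)=P+Q$ reflected, hence iff $\mathcal{R}(2I-P-Q)$ is closed; then Remark~\ref{rem:ranges of P plus Q equals-1} applied to the pair $(I-P,I-Q)$ delivers $\mathcal{R}(I-P)+\mathcal{R}(I-Q)=\mathcal{R}(2I-P-Q)$ and its closedness. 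Once this is in place, the rest is the short chain of identities above, and the positivity of $2I-P-Q$ is what makes both $\mathcal{N}(2I-P-Q)=\mathcal{R}(P)\cap\mathcal{R}(Q)$ and the orthogonal complementation $\mathcal{N}(2I-P-Q)^\perp=\mathcal{R}(2I-P-Q)$ work cleanly.
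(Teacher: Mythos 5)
There is nothing in the paper to compare against here: Lemma~\ref{lem:summation of two projections has closed range} is quoted from \cite[Proposition~4.6]{Luo-Song-Xu} and no proof is given in this paper, so your argument has to stand on its own. Its soft parts do: the inclusion $\mathcal{R}(I-P)+\mathcal{R}(I-Q)\subseteq(\mathcal{R}(P)\cap\mathcal{R}(Q))^\perp$, the identification $\mathcal{R}(P)\cap\mathcal{R}(Q)=\mathcal{N}(2I-P-Q)$ via positivity of $I-P$ and $I-Q$, the use of Remark~\ref{rem:ranges of P plus Q equals-1} for the pair $(I-P,I-Q)$, and the passage from closedness of $\mathcal{R}(S)$, $S=2I-P-Q$, to $\mathcal{N}(S)^\perp=\mathcal{R}(S)$ via Lemma~\ref{lem:orthogonal} are all correct.

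The genuine gap is the step you yourself flag: transferring closedness from $\mathcal{R}(P)+\mathcal{R}(Q)$ to $\mathcal{R}(2I-P-Q)$. This is not bookkeeping; it is the real content of the lemma, since the right-hand side of \eqref{equ:conjecture is true under the closeness assumption} is closed, so the lemma in particular asserts that $\mathcal{R}(I-P)+\mathcal{R}(I-Q)$ is closed. Your ``reflection'' justification does not prove it: the trivial identity $\sigma(2I-P-Q)=2-\sigma(P+Q)$ only says that $0$ is isolated in $\sigma(2I-P-Q)$ iff $2$ is isolated in $\sigma(P+Q)$, whereas closedness of $\mathcal{R}(P+Q)$ corresponds to $0$ being isolated in (or absent from) $\sigma(P+Q)$. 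What you actually need is that $0$ is isolated-or-absent in $\sigma(P+Q)$ if and only if $2$ is, i.e.\ the symmetry of $\sigma(P+Q)\cap\big((0,2)\setminus\{1\}\big)$ about the point $1$. That is a genuine two-projections theorem, not a tautology; it can be proved, e.g., by setting $a=P+Q-I$, $b=P-Q$, noting $a^2+b^2=I$ and $ab=-ba$, embedding $\mathcal{L}(H)$ faithfully into some $B(\mathcal{H})$ (spectral permanence), and checking that an approximate kernel sequence for $a+\mu$ is carried by $b$ to one for $a-\mu$ with norms bounded below by $\sqrt{1-\mu^2}$ when $|\mu|<1$; combined with the fact that a positive adjointable operator has closed range iff $0$ is isolated in or absent from its spectrum (available from Lemma~\ref{lem:Xu-Sheng condition of M-P invertible} and Lemma~\ref{lem:orthogonal} via functional calculus), this yields the transfer, and then your chain of identities finishes the proof. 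Note also that you cannot shortcut this by citing the paper's Lemma~\ref{lem:4-equivalent conditions-1}, (iii)$\Leftrightarrow$(iv): that equivalence is itself deduced from the present lemma, so invoking it here would be circular. As written, the proposal reduces the lemma correctly to this closedness transfer but leaves the transfer unproved.
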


We provide a technical lemma of this section as follows.
\begin{lem}\label{lem:norm of two projections less than one} Let $P,Q\in\mathcal{L}(H)$ be projections. Then the following statements are equivalent:
\begin{enumerate}
\item[{\rm (i)}] $\Vert PQ\Vert<1$;
\item[{\rm (ii)}] $\mathcal{R}(P)\cap \mathcal{R}(Q)=\{0\}$ and $\mathcal{R}(P)+\mathcal{R}(Q)$ is closed;
\item[{\rm (iii)}] $\mathcal{R}(I-P)+\mathcal{R}(I-Q)=H$.
\end{enumerate}
\end{lem}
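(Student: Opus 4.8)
The plan is to prove the cycle (i)$\Rightarrow$(iii)$\Rightarrow$(ii)$\Rightarrow$(i), using the operator $T$ of \eqref{equ:inclusion of two projectons} together with Lemmas~\ref{lem:orthogonal}, \ref{lem:summation of two projections has closed range} and Remark~\ref{rem:ranges of P plus Q equals-1} to transfer between statements about $\mathcal{R}(P)+\mathcal{R}(Q)$ and about $P+Q$. The starting point for (i)$\Rightarrow$(iii) is the identity $I-PQ=(I-P)+P(I-Q)$; more usefully, one observes that $\Vert PQ\Vert<1$ forces $(I-P)(I-Q)$ --- equivalently $I-PQP$ on $\mathcal R(P)$ --- to be bounded below, and hence that the operator $S=\left(\begin{smallmatrix}I-P & 0\\ 0 & I-Q\end{smallmatrix}\right)$ restricted appropriately has closed range summing onto $H$. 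Concretely: from $\Vert PQ\Vert<1$ we get $\Vert PQP\Vert = \Vert PQ\Vert^2<1$, so $P-PQP \ge (1-\Vert PQ\Vert^2)P$ is invertible on $\mathcal R(P)$, i.e. $P(I-Q)P$ is bounded below on $\mathcal R(P)$; a symmetric statement holds with $P$ and $Q$ interchanged, and these give that $\mathcal R(I-P)+\mathcal R(I-Q)$ is all of $H$ by writing, for arbitrary $x\in H$, $x = (I-P)x + Px$ and then expressing $Px$ using the invertibility just obtained: $Px = P(I-Q)P\,[\,P(I-Q)P|_{\mathcal R(P)}]^{-1}Px \in \mathcal R(P(I-Q)) \subseteq \mathcal R(I-Q) + \mathcal R(I-P)$ after moving the $P$ across. (The cleanest bookkeeping is to show $\mathcal{R}(I-P)+\mathcal{R}(I-Q) \supseteq \mathcal{R}(P)$ and symmetrically $\supseteq\mathcal{R}(Q)$, whence it contains $\mathcal{R}(P)+\mathcal{R}(Q)\supseteq \mathcal{R}(I-(I-P))=\dots$; actually it is simplest to note the span of $\mathcal R(I-P),\mathcal R(I-Q),\mathcal R(P),\mathcal R(Q)$ is everything and $\mathcal R(P)\cup\mathcal R(Q)$ is already absorbed.)

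For (iii)$\Rightarrow$(ii): applying Lemma~\ref{lem:summation of two projections has closed range} with the pair $(I-P,I-Q)$ in place of $(P,Q)$ shows that if $\mathcal{R}(P)+\mathcal{R}(Q)=\mathcal R((I-(I-P)))+\mathcal R((I-(I-Q)))=H$ is closed, then $\mathcal{R}(I-P)\cap\mathcal{R}(I-Q) = \big(\mathcal R(I-(I-P))\cap\mathcal R(I-(I-Q))\big)$... wait --- rather, apply the lemma directly to $(P,Q)$: since $\mathcal{R}(I-P)+\mathcal{R}(I-Q)=H$ is trivially closed, \eqref{equ:conjecture is true under the closeness assumption} applied with $(I-P,I-Q)$ gives $\mathcal R(P)+\mathcal R(Q) = \big(\mathcal R(I-P)\cap\mathcal R(I-Q)\big)^\perp = H^\perp{}^{\perp}$; more to the point, we get that $\mathcal{R}(P)+\mathcal{R}(Q) = \big(\mathcal{R}(I-P)\cap\mathcal{R}(I-Q)\big)^\perp = H$ is closed, and then (ii)'s second clause holds; for the first clause, $\mathcal{R}(P)\cap\mathcal{R}(Q) = \mathcal N(I-P)\cap\mathcal N(I-Q) = \big(\overline{\mathcal R(I-P)+\mathcal R(I-Q)}\big)^\perp = H^\perp = \{0\}$ using Lemma~\ref{lem:R(P+Q)=R(P)+R(Q)} and the standard kernel--range orthogonality. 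This step is essentially a dictionary lookup once the right substitution is made.

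For (ii)$\Rightarrow$(i): assume $\mathcal{R}(P)\cap\mathcal{R}(Q)=\{0\}$ and $\mathcal{R}(P)+\mathcal{R}(Q)$ is closed. By Remark~\ref{rem:ranges of P plus Q equals-1} and \eqref{equ:closeness of the ranges of two projections-plus}, $\mathcal{R}(P+Q)$ is then closed and equals $\mathcal{R}(P)+\mathcal{R}(Q)$; by Lemma~\ref{lem:Xu-Sheng condition of M-P invertible} the operator $P+Q$ is M-P invertible and $0\notin\sigma(P+Q)\setminus\{0\}$ is isolated, so $P+Q$ restricted to $\overline{\mathcal R(P+Q)}$ is bounded below by some $\delta>0$; equivalently $P+Q\ge \delta\,P_{\overline{\mathcal R(P+Q)}}$. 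Now suppose, for contradiction, that $\Vert PQ\Vert=1$; then $\Vert PQP\Vert=1$ with $PQP$ a positive contraction on $\mathcal R(P)$, and one shows $1\in\sigma(PQP|_{\mathcal R(P)})$ forces, via an approximate eigenvector argument, the existence of unit vectors $x_n\in\mathcal R(P)$ with $\Vert(I-Q)x_n\Vert\to 0$, hence $\langle (P+Q)x_n,x_n\rangle = 1 + \langle Qx_n,x_n\rangle\to$ ... this is the Hilbert-space intuition, but in the $C^*$-module setting the spectral/approximate-eigenvector argument must be run inside the $C^*$-algebra $\mathcal L(H)$, so I would instead argue algebraically: $\Vert PQ\Vert=1$ gives $\Vert PQP\Vert=1$, so $P-PQP=P(I-Q)P$ is \emph{not} bounded below on $\mathcal R(P)$, i.e. $0\in\sigma\big(P(I-Q)P|_{\mathcal R(P)}\big)$; combined with $\mathcal N(P(I-Q)P)\cap\mathcal R(P) = \mathcal R(P)\cap\mathcal R(Q) = \{0\}$ (the intersection being zero by hypothesis, since $P(I-Q)Px=0$ with $x\in\mathcal R(P)$ forces $(I-Q)x=0$), we conclude $\mathcal{R}\big(P(I-Q)P\big)$ is \emph{not} closed; but $P(I-Q)P = P - PQP$ and one checks $\mathcal R\big(P(I-Q)P\big) = P\big(\mathcal R(I-Q)\big)$'s closure behaves like $\mathcal R(P)\cap(\text{something})$... . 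The honest shortest route: by \eqref{equ:formula for the projection on QP}-style reasoning (Lemma~\ref{lem:derivation of the projection-1} with roles of $P$, $I-Q$), closedness of $\mathcal{R}(P)+\mathcal{R}(Q)$ yields closedness of $\mathcal{R}(P(I-Q))$, hence of $(P(I-Q)P)$, hence $P(I-Q)P$ is bounded below on $\mathcal{R}(P)$ modulo its kernel, and its kernel meets $\mathcal R(P)$ only in $\mathcal R(P)\cap\mathcal R(Q)=\{0\}$, so $P(I-Q)P|_{\mathcal R(P)}$ is genuinely bounded below, giving $\Vert PQP|_{\mathcal R(P)}\Vert<1$, i.e. $\Vert PQ\Vert^2=\Vert PQP\Vert<1$. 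The \textbf{main obstacle} is exactly this last implication: making precise, in the Hilbert $C^*$-module language where no spectral theorem for single self-adjoint elements of $\mathcal L(H)$ beyond the $C^*$-algebraic functional calculus is available, the passage "range closed $+$ trivial kernel on the relevant subspace $\Rightarrow$ bounded below $\Rightarrow$ norm strictly less than $1$"; I expect this to be handled by embedding $\mathcal L(H)$ faithfully and unitally into some $\mathcal L(E)$ via \cite[Corollary~3.7.5]{Pedersen} (as already done in Lemma~\ref{lem:the main result+2}), since $\Vert PQ\Vert$, closedness of ranges of adjointable operators, and triviality of the relevant intersections are all preserved under a faithful $*$-representation when combined with the M-P-invertibility criterion (Lemma~\ref{lem:Xu-Sheng condition of M-P invertible}) and Lemma~\ref{lem:summation of two projections has closed range}, reducing everything to the classical Hilbert-space fact that $\Vert PQ\Vert<1 \iff \mathcal R(P)\cap\mathcal R(Q)=\{0\}$ with closed sum.
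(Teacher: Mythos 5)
Your cycle (i)$\Rightarrow$(iii)$\Rightarrow$(ii) is essentially sound: the observation that $\Vert PQ\Vert<1$ makes $P(I-Q)P|_{\mathcal{R}(P)}\ge (1-\Vert PQ\Vert^2)I_{\mathcal{R}(P)}$ invertible, so that $Px=P(I-Q)Py\in\mathcal{R}(I-Q)+\mathcal{R}(I-P)$ for a suitable $y\in\mathcal{R}(P)$, does give (iii) directly (the paper instead proves (i)$\Rightarrow$(ii) by inverting $I-PQ$ and chasing sequences, and gets (iii) from Lemma~\ref{lem:summation of two projections has closed range}); and your (iii)$\Rightarrow$(ii) via Lemma~\ref{lem:summation of two projections has closed range} plus kernel--range orthogonality is the paper's argument, except for the incorrect intermediate claim $\mathcal{R}(P)+\mathcal{R}(Q)=\big(\mathcal{R}(I-P)\cap\mathcal{R}(I-Q)\big)^\perp=H$: the second equality is false in general (take $P=Q=0$), and all you need, and all the lemma gives, is that the sum equals a closed orthogonal complement.

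The genuine gap is (ii)$\Rightarrow$(i), which you yourself label ``the main obstacle'' and never close. Your algebraic route hinges on the assertion that closedness of $\mathcal{R}(P)+\mathcal{R}(Q)$ yields closedness of $\mathcal{R}\big(P(I-Q)\big)$ ``by Lemma~\ref{lem:derivation of the projection-1}-style reasoning'', but Lemma~\ref{lem:derivation of the projection-1} only provides orthogonal complementedness of the closure $\overline{\mathcal{R}\big(P(I-Q)\big)}$ together with the projection formula \eqref{equ:formula for the projection on QP}; it says nothing about the range itself being closed, which is exactly what your ``bounded below'' step requires. The paper supplies precisely this missing step by a direct sequence argument: if $(I-Q)Px_n\to y$, then $y=(I-Q)y$ and $y\in\mathcal{R}(P+Q)$ (closed by \eqref{equ:closeness of the ranges of two projections-plus}), so $y=(I-Q)(P+Q)w=(I-Q)Pw$; Lemma~\ref{lem:orthogonal} then gives closedness of $\mathcal{R}\big(P(I-Q)P\big)$, the identity $\mathcal{N}\big(P(I-Q)P\big)=\mathcal{N}(P)$ (using $\mathcal{R}(P)\cap\mathcal{R}(Q)=\{0\}$) yields $H=\mathcal{R}\big(P(I-Q)P\big)\dotplus\mathcal{N}(P)$, and from this the paper exhibits explicitly that $I-PQP$ is surjective, hence invertible, so $\Vert PQ\Vert^2=\Vert PQP\Vert<1$. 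Your fallback -- a faithful unital representation $\pi$ reducing to the Hilbert-space case -- is likewise only a sketch: closedness of $\mathcal{R}(P+Q)$ does transfer through $\pi$ via M-P invertibility (Lemma~\ref{lem:Xu-Sheng condition of M-P invertible}), but $\mathcal{R}(P)\cap\mathcal{R}(Q)=\{0\}$ is not an algebraic condition and you give no reason it survives the representation; to make that reduction work one must first recast (ii) in algebraic terms (for instance as invertibility of $2I-P-Q$), which is essentially the same work the direct argument does.
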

\begin{proof}``(i)$\Longrightarrow$(ii)":  Assume that $\Vert PQ\Vert<1$. If $\mathcal{R}(P)\cap \mathcal{R}(Q)\neq\{0\}$, then there exists $x_0\in \mathcal{R}(P)\cap \mathcal{R}(Q)$ such that $\Vert x_0\Vert=1$.  Thus,
$1>\Vert PQ\Vert\geq \Vert PQx_0\Vert=1$, which is a contradiction. Therefore, $\mathcal{R}(P)\cap \mathcal{R}(Q)=\{0\}$.

Given any $y\in  \overline{\mathcal{R}(P)+\mathcal{R}(Q)}$, there exist sequences $\{x_n\}$ and $\{y_n\}$ in $H$ such that
\begin{equation}\label{equ:element in the closure-1}Px_n+Qy_n\longrightarrow y,\end{equation}
which gives
\begin{equation*}(I-PQ)Qy_n=(I-P)(Px_n+Qy_n)\longrightarrow (I-P)y.
\end{equation*}
Since $\Vert PQ\Vert< 1$, the operator $I-PQ$ is invertible. Hence
\begin{equation*}Qy_n\longrightarrow (I-PQ)^{-1}(I-P)y=Qu\ \mbox{for some $u\in H$}. \end{equation*}
Taking limits together with
\eqref{equ:element in the closure-1} yield
\begin{equation*}Px_n=(Px_n+Qy_n)-Qy_n\longrightarrow y-Qu=Pv\ \mbox{for some $v\in H$},\end{equation*}
from which we get $y=Pv+Qu\in \mathcal{R}(P)+\mathcal{R}(Q)$. This completes the proof of the closedness of $\mathcal{R}(P)+\mathcal{R}(Q)$.

``(ii)$\Longrightarrow$(i)": From $\Vert PQ\Vert^2=\Vert PQQ^*P^*\Vert=\Vert PQP\Vert$ and the positivity of $PQP$, we conclude that
\begin{equation*}\Vert PQ\Vert<1\Longleftrightarrow \Vert PQP\Vert<1 \Longleftrightarrow I-PQP  \ \mbox{is invertible}.
\end{equation*}
Assume that $\mathcal{R}(P)\cap \mathcal{R}(Q)=\{0\}$ and $\mathcal{R}(P)+\mathcal{R}(Q)$ is closed. We show that
$ I-PQP$ is invertible.

Injectivity: Let $x\in H$ be such that $(I-PQP)x=0$. Then $x=PQPx\in\mathcal{R}(P)$, so $x=Px$ and thus
$P(I-Q)Px=0$. Hence $(I-Q)Px=0$. Therefore,
$x=Px=QPx\in \mathcal{R}(P)\cap \mathcal{R}(Q)=\{0\}$.

Surjectivity:  By \eqref{equ:closeness of the ranges of two projections-plus}, we know that  $\mathcal{R}(P+Q)$ is closed and $\mathcal{R}(P)+\mathcal{R}(Q)=\mathcal{R}(P+Q)$. Given any $y\in\overline{\mathcal{R}\big((I-Q)P\big)}$,
there exists a sequence $\{x_n\}$ in $H$ such that
\begin{equation*}Px_n+Q(-Px_n)=(I-Q)Px_n\longrightarrow y=(I-Q)y=(P+Q)w\ \mbox{for some $w\in H$},\end{equation*}
since both $\mathcal{R}(I-Q)$ and $\mathcal{R}(P+Q)$ are closed. Then
\begin{equation*}y=(I-Q)(I-Q)y=(I-Q)(P+Q)w=(I-Q)Pw\in \mathcal{R}\big((I-Q)P\big).
\end{equation*}
The process above shows that $\mathcal{R}\big((I-Q)P\big)$ is closed. In view of Lemma~\ref{lem:orthogonal}, we infer that
$\mathcal{R}\big(P(I-Q)P\big)=\mathcal{R}\Big(\big((I-Q)P\big)^*(I-Q)P\Big)$ is also closed.

Clearly, $\mathcal{N}(P)\subseteq \mathcal{N}\big(P(I-Q)P\big)$. Conversely, given any $x\in H$ such that $P(I-Q)Px=0$,
we arrive at $(I-Q)Px=0$. Hence
$$Px=QPx\in \mathcal{R}(P)\cap \mathcal{R}(Q)=\{0\}.$$ Therefore,
$\mathcal{N}\big(P(I-Q)P\big)\subseteq \mathcal{N}(P)$. This completes the proof that $\mathcal{N}(P)=\mathcal{N}\big(P(I-Q)P\big)$.

Accordingly, by Lemma~\ref{lem:orthogonal}, $H$ can be orthogonally decomposed  as
\begin{equation*}H=\mathcal{R}\big(P(I-Q)P\big)\dotplus \mathcal{N}(P).\end{equation*}

Now, given any $x\in H$, there exist $u\in H$ and $v\in\mathcal{N}(P)$ such that
$$x=P(I-Q)Pu+v,$$
so that $Px=P(I-Q)Pu$. Therefore,
\begin{align*}(I-PQP)\big(Pu+(I-P)x\big)=P(I-Q)Pu+(I-P)x=x.
\end{align*}
This completes the proof that $\mathcal{R}(I-PQP)=H$.

``(ii)$\Longrightarrow$(iii)": The conclusion is directly deduced  from \eqref{equ:conjecture is true under the closeness assumption} in Lemma~\ref{lem:summation of two projections has closed range}.

``(iii)$\Longrightarrow$(ii)": Assume that $\mathcal{R}(I-P)+\mathcal{R}(I-Q)=H$. Replacing $P$ and $Q$ in Lemma~\ref{lem:summation of two projections has closed range} with $I-P$ and $I-Q$, respectively, we infer that $\mathcal{R}(P)+\mathcal{R}(Q)$ is closed. From \eqref{equ:conjecture is true under the closeness assumption} we get
$\big(\mathcal{R}(P)\cap\mathcal{R}(Q)\big)^\bot=H$, which can happen only if $\mathcal{R}(P)\cap\mathcal{R}(Q)=\{0\}$.
\end{proof}

\begin{lem}\label{lem:4-equivalent conditions-1}Let $P,Q\in\mathcal{L}(H)$ be projections such that
$\mathcal{R}(P)\cap \mathcal{R}(Q)$ is orthogonally complemented in $H$.
Then the following statements are equivalent:
\begin{enumerate}
\item[{\rm (i)}] $\Vert PQ-P_{\mathcal{R}(P)\cap \mathcal{R}(Q)}\Vert<1$;
\item[{\rm (ii)}] $\mathcal{R}(P)\cap \big(\mathcal{R}(P)\cap \mathcal{R}(Q)\big)^\bot+\mathcal{R}(Q)\cap \big(\mathcal{R}(P)\cap \mathcal{R}(Q)\big)^\bot$ is closed;
\item[{\rm (iii)}] $\mathcal{R}(P)+\mathcal{R}(Q)$ is closed;
\item[{\rm (iv)}] $\mathcal{R}(I-P)+\mathcal{R}(I-Q)$ is closed.
\end{enumerate}
\end{lem}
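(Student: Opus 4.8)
The plan is to prove the four conditions equivalent by reducing everything to Lemma~\ref{lem:norm of two projections less than one}, applied not to $P$ and $Q$ themselves but to a pair of auxiliary projections on the submodule $N:=\big(\mathcal{R}(P)\cap\mathcal{R}(Q)\big)^\bot$. Write $E=P_{\mathcal{R}(P)\cap\mathcal{R}(Q)}$, so that $I-E=P_N$ is well-defined by hypothesis, and set $H_0=\mathcal{R}(I-E)=N$. The first step is to observe that $E$ commutes with both $P$ and $Q$: indeed $\mathcal{R}(E)=\mathcal{R}(P)\cap\mathcal{R}(Q)\subseteq\mathcal{R}(P)$ gives $PE=E=EP$, and likewise $QE=E=EQ$. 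Consequently $P$ and $Q$ leave $H_0$ invariant, and their restrictions $P_0:=P|_{H_0}$ and $Q_0:=Q|_{H_0}$ are projections in $\mathcal{L}(H_0)$ with $\mathcal{R}(P_0)=\mathcal{R}(P)\cap N$, $\mathcal{R}(Q_0)=\mathcal{R}(Q)\cap N$, and crucially $\mathcal{R}(P_0)\cap\mathcal{R}(Q_0)=\mathcal{R}(P)\cap\mathcal{R}(Q)\cap N=\{0\}$.

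The second step identifies (i) with statement (i) of Lemma~\ref{lem:norm of two projections less than one} for the pair $(P_0,Q_0)$. Using $PQE=E$ and $E=EPQE$ one checks $PQ-E = (I-E)PQ(I-E)$, hence $\|PQ-E\|=\|(I-E)PQ(I-E)\|=\|P_0Q_0\|_{\mathcal{L}(H_0)}$ (the norm is unchanged on restricting to the reducing submodule $H_0\oplus\mathcal{R}(E)$ since $PQ-E$ vanishes on $\mathcal{R}(E)$). So (i) is exactly $\|P_0Q_0\|<1$. By Lemma~\ref{lem:norm of two projections less than one}, this is equivalent to ``$\mathcal{R}(P_0)\cap\mathcal{R}(Q_0)=\{0\}$ and $\mathcal{R}(P_0)+\mathcal{R}(Q_0)$ is closed in $H_0$''; the intersection being zero was already established, so (i) $\iff$ $\mathcal{R}(P_0)+\mathcal{R}(Q_0)$ is closed in $H_0$, which is precisely statement (ii) of the present lemma (note $\mathcal{R}(P)\cap N+\mathcal{R}(Q)\cap N$ closed in $H_0$ is the same as closed in $H$, since $H_0$ is orthogonally complemented). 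Thus (i) $\iff$ (ii).

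The third step is (ii) $\iff$ (iii). For the nontrivial direction, use the decomposition $\mathcal{R}(P)=\big(\mathcal{R}(P)\cap N\big)\dotplus\big(\mathcal{R}(P)\cap\mathcal{R}(E)\big)$, which holds because $P$ commutes with the projection $E$; similarly for $\mathcal{R}(Q)$, and $\mathcal{R}(P)\cap\mathcal{R}(E)=\mathcal{R}(Q)\cap\mathcal{R}(E)=\mathcal{R}(E)$. Hence $\mathcal{R}(P)+\mathcal{R}(Q)=\big(\mathcal{R}(P)\cap N+\mathcal{R}(Q)\cap N\big)\dotplus\mathcal{R}(E)$, and since $\mathcal{R}(E)$ is closed and orthogonal to $N\supseteq\mathcal{R}(P)\cap N+\mathcal{R}(Q)\cap N$, the left side is closed if and only if the first summand is. Alternatively, and perhaps more cleanly, one can invoke Remark~\ref{rem:ranges of P plus Q equals-1} together with Lemma~\ref{lem:norm of two projections less than one} applied to $(P_0,Q_0)$ on $H_0$ versus $(P,Q)$ on $H$, routing through condition (iii) of that lemma. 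Finally, step four handles (iii) $\iff$ (iv): this is immediate from Lemma~\ref{lem:summation of two projections has closed range} and Remark~\ref{rem:ranges of P plus Q equals-1} — replacing $(P,Q)$ by $(I-P,I-Q)$ in Lemma~\ref{lem:summation of two projections has closed range} shows $\mathcal{R}(I-P)+\mathcal{R}(I-Q)$ closed implies $\mathcal{R}(P)+\mathcal{R}(Q)$ closed, and the reverse implication is the lemma itself applied directly (closedness of $\mathcal{R}(P)+\mathcal{R}(Q)$ forces $\mathcal{R}(I-P)+\mathcal{R}(I-Q)=\big(\mathcal{R}(P)\cap\mathcal{R}(Q)\big)^\bot$, which is closed since it is orthogonally complemented by hypothesis).

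The main obstacle I anticipate is the bookkeeping in step two: verifying carefully that $PQ-E=(I-E)PQ(I-E)$ and that restricting to $H_0$ does not change the operator norm, so that condition (i) transfers faithfully to the $(P_0,Q_0)$ picture. Everything else is a matter of organizing the reducing-subspace decomposition induced by the projection $E$ (which commutes with $P$, $Q$ and hence with $PQ$) and then quoting Lemmas~\ref{lem:norm of two projections less than one} and \ref{lem:summation of two projections has closed range} together with Remark~\ref{rem:ranges of P plus Q equals-1}.
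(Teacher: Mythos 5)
Your proposal is correct, and for two of the three equivalences it coincides with the paper's own argument: your restriction $P_0=P|_{H_0}$, $Q_0=Q|_{H_0}$ to $H_0=\big(\mathcal{R}(P)\cap\mathcal{R}(Q)\big)^\perp$ is just the paper's compression $P_1=P(I-P_\Omega)$, $Q_1=Q(I-P_\Omega)$ viewed on the complemented submodule (the identity $PQ-P_\Omega=P_1Q_1$ plays the role of your $PQ-E=(I-E)PQ(I-E)$), and both proofs then quote Lemma~\ref{lem:norm of two projections less than one} for (i)$\Leftrightarrow$(ii) and Lemma~\ref{lem:summation of two projections has closed range} for (iii)$\Leftrightarrow$(iv). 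The genuine difference is the step (ii)$\Leftrightarrow$(iii): the paper introduces $T=P_1+Q_1=P+Q-2P_\Omega$, passes through \eqref{equ:closeness of the ranges of two projections-plus} to trade closedness of $\mathcal{R}(P_1)+\mathcal{R}(Q_1)$ for closedness of $\mathcal{R}(T)$, and then runs two explicit sequence arguments comparing $\mathcal{R}(T)$ with $\mathcal{R}(P+Q)$; you instead use that $P$ and $Q$ commute with $E$ to get the orthogonal splitting $\mathcal{R}(P)+\mathcal{R}(Q)=\big(\mathcal{R}(P)\cap H_0+\mathcal{R}(Q)\cap H_0\big)\dotplus\mathcal{R}(E)$ and the elementary observation that adjoining the orthogonally complemented closed summand $\mathcal{R}(E)$ does not affect closedness (apply $E$ and $I-E$ to a convergent sequence). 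Your route is shorter and avoids the sequence chase, at the cost of having to verify the splitting and the norm-preservation under restriction; the paper's route is longer but reuses machinery (Remark~\ref{rem:ranges of P plus Q equals-1}) already set up for other results. One cosmetic point: in (iv)$\Rightarrow$(iii) and (iii)$\Rightarrow$(iv) the set $\big(\mathcal{R}(P)\cap\mathcal{R}(Q)\big)^\perp$ (respectively $\big(\mathcal{R}(I-P)\cap\mathcal{R}(I-Q)\big)^\perp$) is closed simply because orthogonal complements are always closed, so the appeal to the orthogonal complementarity hypothesis there is unnecessary, though harmless.
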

\begin{proof}For simplicity, we put $\mathcal{R}(P)\cap \mathcal{R}(Q)=\Omega$. It is obvious that both $P$ and $Q$ commute with $I-P_\Omega$. Hence, if we put
\begin{equation}\label{equ:defn of P1 and Q1}P_1=P(I-P_\Omega), \quad Q_1=Q(I-P_\Omega),\end{equation} then
$P_1$ and $Q_1$ are projections such that
\begin{equation}\label{equ:ranges of P1 and Q1}\mathcal{R}(P_1)=\mathcal{R}(P)\cap \Omega^\bot, \quad \mathcal{R}(Q_1)=\mathcal{R}(Q)\cap \Omega^\bot.
\end{equation}
Furthermore, it is clear that
\begin{equation}\label{equ:intersection of the ranges is zero}PQ-P_\Omega=PQ(I-P_\Omega)=P_1Q_1,\quad \mathcal{R}(P_1)\cap\mathcal{R}(Q_1)=\Omega\cap \Omega^\bot=\{0\}.\end{equation}

``(i)$\Longleftrightarrow$(ii)":\ From \eqref{equ:ranges of P1 and Q1}, \eqref{equ:intersection of the ranges is zero} and Lemma~\ref{lem:norm of two projections less than one},  we conclude that \begin{eqnarray*}\Vert PQ-P_\Omega\Vert<1&\Longleftrightarrow& \Vert P_1Q_1\Vert<1\\
&\Longleftrightarrow& \mathcal{R}(P_1)+\mathcal{R}(Q_1)\ \mbox{is closed}\\
&\Longleftrightarrow& \mbox{$\mathcal{R}(P)\cap \Omega^\bot+\mathcal{R}(Q)\cap \Omega^\bot$ is closed.}
\end{eqnarray*}

``(ii)$\Longleftrightarrow$(iii)": Let $P_1, Q_1$ be defined by \eqref{equ:defn of P1 and Q1}, and put
\begin{equation}\label{equ:defn of T temporary}T=P_1+Q_1=(P+Q)(I-P_\Omega)=(I-P_\Omega)(P+Q)=P+Q-2P_\Omega.\end{equation} Since both $P_1$ and $Q_1$ are projections, by
\eqref{equ:closeness of the ranges of two projections-plus} and \eqref{equ:ranges of P1 and Q1}, we see that
$\mathcal{R}(P)\cap \Omega^\bot+\mathcal{R}(Q)\cap \Omega^\bot$ is closed if and only $\mathcal{R}(T)$ is closed.

Suppose that $\mathcal{R}(T)$ is closed. Given any $x\in \overline{\mathcal{R}(P)+\mathcal{R}(Q)}=\overline{\mathcal{R}(P+Q)}$, there exist $u_n\in H\,\,(n\in\mathbb{N})$ such that
$(P+Q)u_n\to x$ as $n\to\infty$. Then
$$Tu_n=(I-P_\Omega)(P+Q)u_n\longrightarrow (I-P_\Omega)x=Tu\ \mbox{for some $u\in H$}.$$
Now \eqref{equ:defn of T temporary} ensures that
\begin{align*}x&=P_\Omega x+(I-P_\Omega) x=P_\Omega x+Tu=P_\Omega x+(P+Q)u-2P_\Omega u\\
&=P\big(u+P_\Omega(x-2u)\big)+Qu\in \mathcal{R}(P)+\mathcal{R}(Q).
\end{align*}
This completes the proof of the closedness of $\mathcal{R}(P)+\mathcal{R}(Q)$.

Conversely, suppose that $\mathcal{R}(P)+\mathcal{R}(Q)$ is closed. By \eqref{equ:closeness of the ranges of two projections-plus},
$\mathcal{R}(P+Q)$ is also closed. Given any $x\in\overline{\mathcal{R}(T)}$, there exist $u_n\in H (n\in\mathbb{N})$ such that
$Tu_n\to x$ as $n\to\infty$. Then
\begin{equation*}x=\lim_{n\to\infty}Tu_n=\lim_{n\to\infty}(I-P_\Omega)Tu_n=(I-P_\Omega)x.\end{equation*}
Meanwhile, since $\mathcal{R}(P+Q)$ is closed, from \eqref{equ:defn of T temporary} we get
$$\mbox{$x=(P+Q)u$ for some $u\in H$.}$$
It follows that $x=(I-P_\Omega)(P+Q)u=Tu$. This completes the proof of the closedness of $\mathcal{R}(T)$.

``(iii)$\Longleftrightarrow$(iv)": The conclusion is directly deduced from Lemma~\ref{lem:summation of two projections has closed range}.
\end{proof}

Now, we are in the position to give  to the main result of this section as follows.
\begin{thm}\label{thm:the final main result}Let $P, Q\in\mathcal{L}(H)$ be two projections such that $\mathcal{R}(P)\cap\mathcal{R}(Q)$ and $\mathcal{N}(P)\cap\mathcal{N}(Q)$ are both orthogonally complemented in $H$. Then equation \eqref{equ:motivation equation} is valid.
\end{thm}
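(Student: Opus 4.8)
The plan is to first rewrite equation~\eqref{equ:motivation equation} as the norm identity
\[
\big\|PQ-P_{\mathcal{R}(P)\cap\mathcal{R}(Q)}\big\|=\big\|(I-P)(I-Q)-P_{\mathcal{N}(P)\cap\mathcal{N}(Q)}\big\|.
\]
This reformulation is legitimate because $P_{\mathcal{R}(P)\cap\mathcal{R}(Q)}$ is dominated by both $P$ and $Q$, whence $PQ\,P_{\mathcal{R}(P)\cap\mathcal{R}(Q)}=P_{\mathcal{R}(P)\cap\mathcal{R}(Q)}$, and symmetrically $(I-P)(I-Q)P_{\mathcal{N}(P)\cap\mathcal{N}(Q)}=P_{\mathcal{N}(P)\cap\mathcal{N}(Q)}$ --- the same type of identity already observed in the proof of Lemma~\ref{lem:4-equivalent conditions-1}. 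I would then split into two cases according to whether $\mathcal{R}(P)+\mathcal{R}(Q)$ is closed.

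In the case where $\mathcal{R}(P)+\mathcal{R}(Q)$ is closed, the strategy is to promote the present hypothesis to that of Lemma~\ref{lem:the main result+2}. By \eqref{equ:closeness of the ranges of two projections-plus} the range of $P+Q$ is then closed, so Remark~\ref{rem:closeness implies orthogonality} (with $T=P+Q$) shows that $\overline{\mathcal{R}(P+Q)}=\mathcal{R}(P+Q)$ is orthogonally complemented. Since $\mathcal{R}(P)\cap\mathcal{R}(Q)$ is orthogonally complemented by assumption, the equivalence (iii)$\Leftrightarrow$(iv) of Lemma~\ref{lem:4-equivalent conditions-1} gives that $\mathcal{R}(I-P)+\mathcal{R}(I-Q)$ is closed as well; repeating the previous step with $I-P,\,I-Q$ in place of $P,\,Q$ and noting $2I-P-Q=(I-P)+(I-Q)$, we conclude that $\overline{\mathcal{R}(2I-P-Q)}$ is orthogonally complemented too. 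Lemma~\ref{lem:the main result+2} then yields \eqref{equ:motivation equation} at once.

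In the remaining case where $\mathcal{R}(P)+\mathcal{R}(Q)$ is not closed, the plan is to show that both sides of the identity above equal $1$. On one hand, $PQ-P_{\mathcal{R}(P)\cap\mathcal{R}(Q)}$ is a product of two projections --- see \eqref{equ:intersection of the ranges is zero} --- and therefore has norm at most $1$; on the other, the failure of statement (iii) of Lemma~\ref{lem:4-equivalent conditions-1} forces the failure of statement (i), i.e.\ $\big\|PQ-P_{\mathcal{R}(P)\cap\mathcal{R}(Q)}\big\|\ge1$, whence this norm equals $1$. Applying the same reasoning to the pair $(I-P,I-Q)$ --- for which the hypothesis $\mathcal{R}(I-P)\cap\mathcal{R}(I-Q)=\mathcal{N}(P)\cap\mathcal{N}(Q)$ being orthogonally complemented is part of our assumptions, and for which $\mathcal{R}(I-P)+\mathcal{R}(I-Q)$ is again non-closed by the equivalence (iii)$\Leftrightarrow$(iv) of Lemma~\ref{lem:4-equivalent conditions-1} --- shows $\big\|(I-P)(I-Q)-P_{\mathcal{N}(P)\cap\mathcal{N}(Q)}\big\|=1$, so the two sides coincide.

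The main obstacle is conceptual rather than computational. Orthogonal complementarity of $\mathcal{R}(P)\cap\mathcal{R}(Q)$ is strictly weaker than orthogonal complementarity of $\overline{\mathcal{R}(2I-P-Q)}$, since a closed submodule may have trivial orthogonal complement without being the whole module --- the extremely discomplementable projections of Section~\ref{sec:discomplementable projections} provide such submodules. Hence one cannot invoke Lemma~\ref{lem:the main result+2} directly, and the case split on the closedness of $\mathcal{R}(P)+\mathcal{R}(Q)$ is exactly what bridges the gap. The subtle point is the non-closed case, where the identity holds not through an equality of ``angles'' but because both norms are pinned at their maximal value $1$; establishing this cleanly is what Lemma~\ref{lem:4-equivalent conditions-1}, applied both to $(P,Q)$ and to $(I-P,I-Q)$, is designed to deliver.
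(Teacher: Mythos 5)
Your proposal is correct and follows essentially the same route as the paper: the same case split on whether $\mathcal{R}(P)+\mathcal{R}(Q)$ is closed, with the closed case reduced to Lemma~\ref{lem:the main result+2} via \eqref{equ:closeness of the ranges of two projections-plus}, Remark~\ref{rem:closeness implies orthogonality} and Lemma~\ref{lem:4-equivalent conditions-1}, and the non-closed case handled by using Lemma~\ref{lem:4-equivalent conditions-1} (applied to both $(P,Q)$ and $(I-P,I-Q)$) to pin both norms at $1$. Your explicit remarks that $PQ\,P_{\mathcal{R}(P)\cap\mathcal{R}(Q)}=P_{\mathcal{R}(P)\cap\mathcal{R}(Q)}$ and that $PQ-P_{\mathcal{R}(P)\cap\mathcal{R}(Q)}$ is a product of two projections simply make explicit what the paper leaves implicit in its Case~2.
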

\begin{proof}Two cases are to be taken into consideration.

\textbf{Case 1}\quad $\mathcal{R}(P)+\mathcal{R}(Q)$ is closed. In this case,  $\mathcal{R}(I-P)+\mathcal{R}(I-Q)$ is also closed by Lemma~\ref{lem:4-equivalent conditions-1}. Therefore, by \eqref{equ:closeness of the ranges of two projections-plus} and Remark~\ref{rem:closeness implies orthogonality} we know that $\mathcal{R}(P+Q)$ and $\mathcal{R}(I-P+I-Q)$ are both orthogonally complemented in $H$, hence equation \eqref{equ:motivation equation} is valid by Lemma~\ref{lem:the main result+2}.

\textbf{Case 2}\quad $\mathcal{R}(P)+\mathcal{R}(Q)$ is not closed. In this case,  $\mathcal{R}(I-P)+\mathcal{R}(I-Q)$ is also not closed by Lemma~\ref{lem:4-equivalent conditions-1}. Therefore, by Lemma~\ref{lem:4-equivalent conditions-1} we conclude that
\begin{equation*}\big\Vert PQ\big(I-P_{\mathcal{R}(P)\cap \mathcal{R}(Q)}\big) \big\Vert=\big\Vert (I-P)(I-Q)\big(I-P_{\mathcal{N}(P)\cap \mathcal{N}(Q)}\big)\big\Vert=1.
\end{equation*}
So, in this case, equation \eqref{equ:motivation equation} is valid.
\end{proof}

\begin{rem}{\rm  Let $M$ and $N$ be two closed submodules of $H$ such that
$M\cap N$ is orthogonally complemented in $H$. As in the Hilbert space case, we can define the Friedrichs angle  $\alpha(M,N)$ through
$c(M,N)$, which is formulated by \eqref{equ:computation of C M N}. If furthermore $M^\bot\cap N^\bot$ is also orthogonally complemented in $H$, then
from \eqref{equ:computation of C M N} and Theorem~\ref{thm:the final main result}  we can conclude that the characterization \eqref{equ:M,N norm equivalent for the angle-1} of the Friedrichs angle is also true.
}\end{rem}

\textbf{Acknowledgement.} The authors would like to sincerely thank the anonymous referee for carefully reading the paper and for useful comments.
\bigskip

% ------------------------------------------------------------------------
\end{document}